\def\section{\@startsection{section}{1}%
  \z@{4.5ex \@plus 1ex \@minus .2ex}%
  {1.5ex \@plus .2ex}%
  {\normalfont\large\scshape\centering\S}}
\def\subsection{\@startsection{subsection}{2}%
  \z@{2.5ex \@plus .5ex \@minus .2ex}%
  {0.5ex \@plus .2ex}%
  {\normalfont\large\bfseries}}
\numberwithin{equation}{section}
\crefname{claim}{claim}{claims}
\Crefname{claim}{Claim}{Claims}
\setlist[enumerate,1]{label=(\arabic*), ref=(\arabic*)}
\setlist[enumerate,3]{label=(\roman*), ref=(\roman*)}
\theoremstyle{plain}
\newtheorem{theorem}{Theorem}[section]
\newtheorem{lemma}[theorem]{Lemma}
\newtheorem{corollary}[theorem]{Corollary}
\newtheorem{proposition}[theorem]{Proposition}
\newtheorem{observation}[theorem]{Observation}
\newtheorem{conjecture}[theorem]{Conjecture}
\newtheorem{problem}[theorem]{Problem}
\newtheorem{remark}{Remark}
\newtheorem{claim}[theorem]{Claim}
\newtheorem*{claim*}{Claim}
\newenvironment{claimproof}[1][Proof]{\par
	\pushQED{\qed}%
	
	\normalfont \topsep6\p@\@plus6\p@\relax
	\trivlist
	\item[\hskip\labelsep
	\textit{#1}\@addpunct{.}~]\ignorespaces
}{%
	\popQED\endtrivlist\@endpefalse
}
\theoremstyle{definition}
\newtheorem{definition}[theorem]{Definition}
\newcommand{\calD}{\mathcal{D}}
\newcommand{\calF}{\mathcal{F}}
\newcommand{\calL}{\mathcal{L}}
\newcommand{\defeq}{\coloneqq}
\newcommand{\Hm}{\mathrm{Hom}}
\newcommand{\hm}{\mathrm{hom}}
\title{On Sidorenko exponents of hypergraphs}
\author[Hyunwoo Lee]{Hyunwoo Lee}
\begin{document}
\maketitle

\begin{abstract}
    For an $r$-graph $F$, define Sidorenko exponent $s(F)$ as $$s(F):= \sup \{s \geq 0: \exists \text{$r$-graph $H$ s.t. } t_F(H) = t_{K^{(r)}_r} (H)^s > 0\},$$ where $t_{H_1}(H_2)$ denotes the homomorphism density of $H_1$ in $H_2$. The celebrated Sidorenko's conjecture states that $s(F) = e(F)$ holds for every bipartite graph $F$.
    It is known that for all $r \geq 3$, the $r$-uniform version of Sidorenko's conjecture is false, and only a few hypergraphs are known to be Sidorenko.
    
    In this paper, we discover a new broad class of Sidorenko hypergraphs and obtain general upper bounds on $s(F)$ for certain hypergraphs related to dominating hypergraphs. This makes progress toward a problem raised by Nie and Spiro. We also discover a new connection between Sidorenko exponents and upper bounds on the extremal numbers of a large class of hypergraphs, which generalizes the hypergraph analogue of K\H{o}v\'{a}ri--S\'{o}s--Tur\'{a}n theorem proved by Erd\H{o}s.
\end{abstract}


\section{Introduction}\label{sec:intro}

\subsection{Background}\label{subsec:background}

A \emph{homomorphism} from an $r$-graph $F$ to an $r$-graph $H$ is a function $\phi: V(F) \to V(H)$ such that for all $e\in E(F)$, its image $\phi(e)$ is an edge of $H$. Denote $\Hm(F, H)$ and $\hm(F, H)$ by the set of homomorphisms from $F$ to $H$ and its size, respectively.
A \emph{homomorphism density} of $F$ in $H$, denoted $t_F(H)$, is defined as follows.
$$
    t_F(H) \defeq \frac{\hm(F, H)}{v(H)^{v(F)}}.
$$
A central topic in extremal (hyper)graph theory is to determine the minimum value of $t_F(\cdot)$ for some (hyper)graph $F$ under various additional assumptions. For instance, a celebrated \emph{Sidorenko's conjecture}~\cite{Sidorenko1,Sidorenko2} states that for every bipartite graph $F$, the value $t_F(G)$ is always larger than or equal to the homomorphism density of $F$ in a random graph with the same edge density as $G$. Formally, the following is the statement of Sidorenko's conjecture.

\allowdisplaybreaks
\begin{conjecture}[Sidorenko~\cite{Sidorenko1,Sidorenko2}]\label{conj:sidorenko}
    For every bipartite graph $F$ and every graph $G$, the following inequality holds.
    \begin{equation*}
        t_F(G) \geq t_{K^{(2)}_2}(G)^{e(F)}.
    \end{equation*}
\end{conjecture}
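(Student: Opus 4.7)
The plan is to approach the conjecture via the entropy method, which has proved most fruitful for all partial results known to date. First I would reformulate the statement: writing $p \defeq t_{K^{(2)}_2}(G)$ for the edge density of $G$, the desired inequality $t_F(G) \geq p^{e(F)}$ is equivalent to $\hm(F, G) \geq p^{e(F)} v(G)^{v(F)}$. Since $\hm(F, G) \geq 2^{H(\phi)}$ for any random variable $\phi$ supported on $\Hm(F, G)$, it would suffice to construct a random homomorphism $\phi : V(F) \to V(G)$ whose Shannon entropy satisfies
\[
    H(\phi) \geq v(F) \log v(G) + e(F) \log p.
\]
The natural candidate is obtained by sampling a uniformly random edge $xy$ of $G$ and then extending $\phi$ vertex-by-vertex along some ordering of $V(F)$, conditioning each new choice on the images of previously-placed neighbors.

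Next I would exploit the bipartite structure $V(F) = A \sqcup B$. A chain-rule expansion writes $H(\phi) = \sum_{v \in V(F)} H(\phi(v) \mid \phi(u): u \prec v)$ for a chosen linear order $\prec$ on $V(F)$. For each $b \in B$ one hopes to bound the corresponding conditional entropy from below in terms of $d_F(b) \log p$ by convexity, while the vertices in $A$ contribute the full $\log v(G)$ each. For trees this works immediately by a direct Jensen argument, and more sophisticated orderings are known to handle even cycles, complete bipartite graphs, hypercubes, and broader classes admitting tree-like or reflective decompositions.

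The main obstacle is that for a general bipartite $F$, no such ordering is known to exist. The vertices of $B$ can share neighborhoods in ways that force either too much correlation (destroying the entropy budget) or too little coverage (destroying the density bound). Every published technique handles only a restricted family, and a general proof likely demands either a new entropy inequality for coupled random walks on $G$, or a genuinely non-entropy argument of analytic flavor. Since Sidorenko's conjecture has resisted all such attacks since 1993, I would not attempt a full proof in this plan: instead I would record the entropy reformulation as a conceptual benchmark against which to measure the hypergraph analogue that this paper investigates.
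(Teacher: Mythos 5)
You have correctly identified the key fact: this statement is Sidorenko's conjecture, which the paper records only as a \emph{conjecture}, not a theorem, so there is no proof in the paper to compare against. Your decision not to attempt a full proof is the right call. Your sketch of the entropy reformulation is sound: the inequality $\hm(F,G) \geq 2^{H(\phi)}$ (entropy in bits) reduces the problem to constructing a random homomorphism $\phi$ with $H(\phi) \geq v(F)\log v(G) + e(F)\log p$, and the chain-rule-plus-Jensen argument you describe is exactly the mechanism behind the known partial results for trees, even cycles, hypercubes, and the Conlon--Fox--Sudakov class of graphs with a vertex complete to the other side. Your identification of the obstacle — that for general bipartite $F$ no vertex ordering simultaneously controls correlation and preserves the density budget — is also the standard diagnosis of why the approach stalls. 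One small imprecision: it is not quite accurate that \emph{every} published technique is an entropy argument in the sense you describe; the weakly norming / finite reflection group approach of Conlon and Lee, for instance, is analytic rather than entropic in formulation, though it can be recast. But since you are not claiming a proof, this does not affect the correctness of your assessment. In short, your proposal correctly reports the state of the art and offers no gap, because it appropriately declines to manufacture a proof of an open problem.
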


In the last two decades, there have been extensive studies to prove that certain classes of bipartite graphs satisfy Sidorenko's conjecture~\cite{Conlon-Fox-Sudakov,CKLL,Finite-reflection,Sidorenko-blowup,Coregliano,Coregliano-Razborov,KLL,Li-Szegedi,Szegedy,ILL,CLM,Hatami}, but it still remains widely open.

One natural generalization of \Cref{conj:sidorenko} is to consider its hypergraph version. One may ask whether for every $r$-partite $r$-graph $F$, the following inequality holds for all $r$-graphs $H$:
\begin{equation}
    t_F(H) \geq t_{K_r^{(r)}}(H)^{e(F)}.\label{eq:sidorenko}
\end{equation}

Unfortunately, for every $r \geq 3$, this is not true~\cite{Sidorenko2,Conlon-Lee-Sidorenko}. For instance, a loose $3$-uniform triangle does not satisfy \eqref{eq:sidorenko}. Hence, it is natural to quantify the extent to which a given $r$-partite $r$-graph $F$ deviates from being Sidorenko. To do this, we define \emph{Sidorenko exponent} for $r$-partite $r$-graph $F$ as follows.

\begin{definition}\label{def:sidorenko exponent}
    Let $F$ be an $r$-partite $r$-graph. Then we define the Sidorenko exponent of $F$, denoted $s(F)$, as follows.
    \begin{equation*}
        s(F):= \sup\{s \geq 0: \exists \text{$r$-graph $H$ s.t. $t_F(H) = t_{K_r^{(r)}}(H)^s > 0$} \}.
    \end{equation*}
    If $s(F) = e(F)$, then we say $F$ is \emph{Sidorenko}.
\end{definition}

We note that by considering a random $r$-graph, the inequality $s(F) \geq e(F)$ always holds for all $r$-partite $r$-graph $F$.

Recently, determining the value of $s(F)$ for various $r$-partite $r$-graph $F$ has started to receive attention from many researchers. For instance, Mandache~\cite{Mandache} and Fox, Sah, Sawhney, Stoner, and Zhao~\cite{Triforces} proved that the Sidorenko exponent for $3$-uniform loose triangle is $4$. Conlon, Lee, and Sidorenko~\cite{Conlon-Lee-Sidorenko} recently proved that if $s(F) > e(F)$, then one can significantly improve the lower bound on the extremal number of $F$ that was obtained from the probabilistic deletion method. Here, the \emph{extremal number} of $r$-graph $F$, denoted by $\mathrm{ex}(n, F)$, is the maximum number of edges of $r$-graphs that do not contain a sub-hypergraph isomorphic to $F$. 
Consequently, Nie and Spiro~\cite{Nie-Spiro} adapt the method of Conlon, Lee, and Sidorenko to improve the lower bound for the random Tur\'{a}n problem whenever $s(F) > e(F)$. They also determined and found upper and lower bounds on $s(F)$ for expanded hypergraphs, and raised a question to determine the Sidorenko exponents of various hypergraphs\footnote[1]{Nie and Spiro also used the same notation $s(F)$, but their definition of $s(F)$ is smaller than ours by $e(F)$.} (see also~\cite{super-odd-lin-cycle}).

In this paper, we find a new large class of Sidorenko hypergraphs and also obtain reasonable upper bounds on Sidorenko exponents of certain hypergraphs. We also discovered a new connection to Sidorenko exponents and upper bounds on the extremal numbers of certain hypergraphs.


\subsection{Main theorems}\label{subsec:results}

Our first main theorem is a hypergraph generalization of the following remarkable result.

\begin{theorem}[Conlon--Fox--Sudakov~\cite{Conlon-Fox-Sudakov}]\label{thm:Conlon-Fox-Sudakov}
    Every bipartite graph that has a vertex complete to the other part is Sidorenko.
\end{theorem}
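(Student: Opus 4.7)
I work in the graphon language: let $W$ denote the $\{0,1\}$-valued graphon of $G$ and $p = \int W = t_{K_2}(G)$. Write $F = (A \sqcup B, E(F))$ with $v \in A$ complete to $B = \{u_1, \ldots, u_k\}$, and $A \setminus \{v\} = \{a_1, \ldots, a_m\}$ where $a_j$ has neighborhood $N_j \subseteq B$ of size $d_j$; thus $e(F) = k + d_1 + \cdots + d_m$ and the goal is $t_F(G) \geq p^{e(F)}$. The approach reduces everything to the fact that stars $K_{1,\ell}$ are Sidorenko, which is immediate from Jensen's inequality applied to the convex map $x \mapsto x^\ell$ on the degree function $d(\cdot)$.

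\textbf{Step 1 (integrate out the $a_j$).} Since each $a_j \in A \setminus \{v\}$ is only adjacent to $N_j \subseteq B$ and the $a_j$'s are pairwise non-adjacent, the integral over $x_{a_j}$ separates and produces the star weight
\[
S_j(x_{N_j}) \defeq \int \prod_{u \in N_j} W(x_{a_j}, x_u) \, dx_{a_j},
\]
giving
\[
t_F(G) = \int \prod_{u \in B} W(x_v, x_u) \cdot \prod_{j=1}^m S_j(x_{N_j}) \, dx_v \, dx_B.
\]

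\textbf{Step 2 (induction on $m$).} I induct on $m$. The base case $m = 0$ is the star $K_{1,k}$, and $t_{K_{1,k}}(G) = \int d(x_v)^k \, dx_v \geq \bigl(\int d\bigr)^k = p^k$ by Jensen. For the inductive step, let $F_0 \defeq F \setminus \{a_m\}$. Since $v$ is still complete to $B$ in $F_0$ (any newly isolated vertices of $B$ are harmless), the inductive hypothesis yields $t_{F_0}(G) \geq p^{e(F_0)} = p^{e(F) - d_m}$. It then suffices to show
\[
t_F(G) \geq p^{d_m} \cdot t_{F_0}(G).
\]

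\textbf{Step 3 (the key Hölder step).} The reduction target $t_F(G) \geq p^{d_m} \cdot t_{F_0}(G)$ is equivalent to the inequality $\mathbb{E}_{\mu_{F_0}}[S_m(x_{N_m})] \geq p^{d_m}$, where $\mu_{F_0}$ is the probability measure on homomorphisms from $V(F_0)$ to $V(G)$ weighted by the $F_0$-integrand. The crucial observation is that the $F_0$-integrand contains the factor $\prod_{u \in N_m} W(x_v, x_u)$ precisely because $v$ is complete to $B$; thus $\mu_{F_0}$ biases the tuple $x_{N_m}$ toward vertices well-connected to $x_v$, and on such tuples the star weight $S_m$ inherits a lower bound via Jensen. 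Concretely, pairing the $W(x_v, x_u)$ factors already in the $F_0$-integrand against the $W(x_{a_m}, x_u)$ factors inside $S_m$ and applying Hölder's inequality (or Cauchy--Schwarz) edge-by-edge produces the needed factor $p^{d_m}$ after invoking the star Sidorenko bound one last time.

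\textbf{Main obstacle.} The crux is Step 3, and its difficulty lies in properly exploiting the correlation between the $F_0$-weighted measure and the star weight $S_m$. The natural tool is a Cauchy--Schwarz ``doubling'' trick, which introduces an auxiliary twin $a_m'$ of $a_m$ with the same neighborhood $N_m$; doubling preserves the complete-to-$B$ hypothesis and, when combined with the inductive hypothesis applied to the doubled graph, yields the required multiplicative $p^{d_m}$ gain. If the doubling argument becomes cumbersome when the $N_j$ heavily overlap, an alternative is a direct multilinear Hölder argument on the edges of $N_m$, using that each edge $vu$ with $u \in N_m$ is present in $F_0$ and can be paired with the matching $a_m u$ edge inside $S_m$.
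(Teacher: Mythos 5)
Your Steps 1--2 are fine (stars via Jensen, plus the induction set-up), but the argument stands or falls on Step 3, and Step 3 is asserted rather than proved. The inequality $t_F(G)\ge p^{d_m}\,t_{F_0}(G)$, equivalently $\mathbb{E}_{\mu_{F_0}}\bigl[S_m(x_{N_m})\bigr]\ge p^{d_m}$, is essentially the entire content of the theorem: it is a correlation inequality for the star weight $S_m$ under the biased measure $\mu_{F_0}$, and neither of the tools you invoke delivers it. The ``doubling'' trick goes the wrong way: introducing a twin $a_m'$ of $a_m$ and applying Cauchy--Schwarz in the variable $x_{a_m}$ gives $t_F(G)^2\le t_{F_0}(G)\,t_{F^{+}}(G)$, where $F^{+}$ is the doubled graph; combining this with the inductive hypothesis applied to $F^{+}$ only lower-bounds $t_{F^{+}}$, not $t_F$, so no multiplicative $p^{d_m}$ gain for $t_F$ can be extracted this way. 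The ``edge-by-edge H\"older pairing'' of $W(x_v,x_u)$ against $W(x_{a_m},x_u)$ is never written down as an actual inequality, and there is no standard estimate of that shape that lower-bounds the mixed integral by $p^{d_m}$ times the integral with $S_m$ deleted; when the neighborhoods $N_j$ overlap, $\mu_{F_0}$ is biased by the other star weights as well, and it is not clear that the exact per-vertex monotonicity you need even holds --- in any case no proof of it is given, so the induction never closes.

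For comparison, the paper never attempts an exact one-vertex-at-a-time inequality. It obtains this theorem as the $r=2$ case of \Cref{thm:unified}: a global counting argument that classifies homomorphisms of the common link $M$ as rare or rich and vertices of $H$ as good or bad, controls the bad contribution by convexity, and proves only the lossy bound $t_F(H)\ge c\,t_{K^{(2)}_2}(H)^{e(F)}$ with a constant $c=c(F)>0$; the constant is then removed by the tensor power trick (\Cref{prop:tensor}). To salvage your scheme you would either have to prove the conditional-expectation bound $\mathbb{E}_{\mu_{F_0}}[S_m]\ge p^{d_m}$ --- which is exactly where the dependent-random-choice/entropy machinery of Conlon--Fox--Sudakov and Szegedy enters --- or weaken the per-step goal to one with a constant loss and tensorize at the end, as the paper does.
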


For the sake of convenience, from now on, we consider all hypergraphs to be labelled hypergraphs. Before we state our main theorem, we first define several notations. For an $r$-graph $F$ and a vertex subset $U\subseteq V(F)$, we denote by $d_F(U)$ the number of edges of $F$ that contain at least one vertex of $U$.
Let $F$ be an $r$-partite $r$-graph on vertex partition $V_1 \cup \cdots V_r$. Let $v\in V_i$ be a vertex where $i\in [r]$. We define a \emph{link hypergraph} of $v$ in $F$, denoted $\calL_F(v)$ as 
$$
    V(\calL_F(v)) \defeq \{u\in \bigcup_{j\in [r]} V_j \setminus V_i: \exists e\in E(F) \text{ s.t. } \{u, v\}\subseteq e \}
$$ 
and 
$$
    E(\calL_{F}(v)) \defeq \{f\in V(\calL_F(v))^{(r-1)}): f\cup \{v\} \in E(F)\}.
$$ 
Note that link hypergraphs of $r$-graphs are $(r-1)$-graphs.
Suppose $(v_1, \dots, v_t)$ be the vertices of the $r$-th part of $F$. Then we define \emph{link profile} of $F$ as a tuple of link hypergraphs $(\calL_F(v_1), \dots, \calL_F(v_t))$.

Our first main theorem is the following.
\begin{theorem}\label{thm:main-sidorenko}
    Let $r \geq 2$ be an integer and $M$ be a disjoint union of Sidorenko $(r-1)$-graphs, say $S_1, \dots, S_{\ell}$.
    Let $F$ be an $r$-graph with link profile $(\calL_F(v_1), \dots, \calL_F(v_t))$ such that $\calL_F(v_1) = M$. Assume for each $i \in [t]$, the link hypergraph $\calL_F(v_i)$ is a sub-hypergraph of $M$ such that $\calL_F(v_i)$ is a disjoint union of some members in $\{S_1, \dots, S_{\ell}\}$. Then $F$ is Sidorenko. 
\end{theorem}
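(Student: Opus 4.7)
The plan is to prove \Cref{thm:main-sidorenko} in two stages. First, I reduce the general statement to a \emph{matching model} case in which every $S_j$ is a single $(r-1)$-edge, using the assumed Sidorenko property of each component $S_j$. Second, I handle the matching case by exhibiting a bipartite skeleton and applying \Cref{thm:Conlon-Fox-Sudakov}.

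For the first stage, the starting point is the decomposition
\begin{equation*}
    \hm(F, G) = \sum_{\vec w \in V(G)^t} \prod_{j=1}^\ell \hm\!\Bigl(S_j,\, \textstyle\bigcap_{i \in T_j} \calL_G(w_i)\Bigr),
\end{equation*}
where $J_i \subseteq [\ell]$ records which $S_j$ occur in $\calL_F(v_i)$, $T_j \defeq \{i \in [t] : j \in J_i\}$, and $\calL_G(w)$ denotes the link $(r-1)$-graph of $w$ in $G$. This identity follows by summing the homomorphism indicator over an assignment of the apex vertices and exploiting that the $V(S_j)$ are pairwise disjoint; in particular, $1 \in T_j$ for every $j$ since $\calL_F(v_1) = M$. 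Applying the Sidorenko hypothesis for each $S_j$ to the common link $\bigcap_{i \in T_j} \calL_G(w_i)$, together with $t_{K_{r-1}^{(r-1)}}(L) = (r-1)!\,e(L)/n^{r-1}$, and then recognizing the resulting expression as the homomorphism count of the auxiliary $r$-graph $F^\star$ obtained from $F$ by replacing each component $S_j$ of $M$ with $e(S_j)$ disjoint $(r-1)$-edges (consistently in every $\calL_F(v_i)$), yields the clean reduction
\begin{equation*}
    \hm(F, G) \geq n^{v(M) - (r-1) e(M)} \cdot \hm(F^\star, G).
\end{equation*}
Since $e(F^\star) = e(F)$ and $v(F^\star) - v(F) = (r-1)e(M) - v(M) \geq 0$, the theorem reduces to proving that $F^\star$ is Sidorenko.

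For the second stage, observe that $F^\star$ is the $(1, r-1)$-expansion of a bipartite graph $B$: one side of $B$ is $\{v_1, \dots, v_t\}$, the other side is the set of $e(M)$ matching edges of $\calL_{F^\star}(v_1)$, and $v_i$ is adjacent to a matching edge $f$ iff $f \in \calL_{F^\star}(v_i)$. By construction, $v_1$ is complete to the other side of $B$, so \Cref{thm:Conlon-Fox-Sudakov} implies that $B$ is Sidorenko. To transfer this to $F^\star$, encode $G$ as the $\{0,1\}$-valued bipartite weight $\tilde A \colon V(G) \times V(G)^{r-1} \to \{0,1\}$ given by $\tilde A(w, y) \defeq \mathbf{1}[\{w\} \cup \{y\} \in E(G)]$. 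A direct calculation shows that $\hm(F^\star, G)$ equals the side-respecting homomorphism count of $B$ into the bipartite weighted graph defined by $\tilde A$, and that the average of $\tilde A$ is exactly $p = t_{K_r^{(r)}}(G) = r!\,e(G)/n^r$. Invoking the weighted bipartite form of \Cref{thm:Conlon-Fox-Sudakov} then gives
\begin{equation*}
    \hm(F^\star, G) \geq n^t (n^{r-1})^{e(M)} p^{e(B)} = n^{v(F^\star)} p^{e(F^\star)},
\end{equation*}
and combining with the first-stage reduction yields $\hm(F, G) \geq n^{v(F)} p^{e(F)}$, as required.

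The main obstacle is the last step: justifying the weighted bipartite form of \Cref{thm:Conlon-Fox-Sudakov} when the two sides of the target weight $\tilde A$ have very different sizes ($n$ versus $n^{r-1}$). The Conlon--Fox--Sudakov argument is density-based and transfers to weighted bipartite targets with unbalanced sides, but an explicit verification is needed. A minor secondary issue is handling vertices of $V_1, \dots, V_{r-1}$ lying outside $V(M)$, which can be removed without affecting either side of the target inequality.
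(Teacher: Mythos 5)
Your proposal is correct in spirit and uses a genuinely different strategy from the paper, and the comparison is worth recording. The paper proves \Cref{thm:main-sidorenko} and \Cref{thm:main-domination} simultaneously via a single unified statement (\Cref{thm:unified}), by re-running the Conlon--Fox--Sudakov \emph{counting} argument at the hypergraph level: it fixes a candidate apex image, classifies homomorphisms of $M$ into downward hypergraphs as rare/rich and vertices as bad/good, proves \Cref{clm:good-vertices,clm:rich-homomorphisms} to lower-bound the contribution of good vertices, and then removes the multiplicative constant by the tensor power trick (\Cref{prop:tensor}). You instead treat \Cref{thm:main-sidorenko} as a \emph{reduction}: apply the Sidorenko inequality for each component $S_j$ pointwise inside the sum over the apex assignment, thereby trading $F$ for the matching-link hypergraph $F^\star$ at the cost of a clean power of $n$, and then observe that $F^\star$ is the $(1,r-1)$-expansion of a bipartite graph $B$ with $v_1$ complete to the other side, so the weighted bipartite form of \Cref{thm:Conlon-Fox-Sudakov} finishes it. Your factorization of $\hm(F,G)$ over $\vec w$ is correct precisely because each $\calL_F(v_i)$ is a union of whole components $S_j$ (for each edge $f\in E(S_j)$ the constraint set is $\{w_i : i\in T_j\}$, independent of $f$), and the bookkeeping of exponents $v(M)-(r-1)e(M)+v(F^\star)=v(F)$ checks out.

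Two points to tidy. First, the symbol $\calL_G(w_i)$ in your first display should be the paper's downward hypergraph $\calD_G(w_i)$ (vertex set $V(G)$), since $G$ is not presented as $r$-partite and the link notation $\calL$ is reserved for that setting. Second, the obstacle you flag — the weighted/kernel bipartite form of \Cref{thm:Conlon-Fox-Sudakov} on a host with sides of sizes $n$ and $n^{r-1}$ — is real but standard: CFS's argument is already phrased at the graphon level, and even from the symmetric-graphon statement one can pass to bipartite kernels by symmetrizing $W$ onto $[0,1]^2$ and absorbing the resulting constant via the same tensor power trick (\Cref{prop:tensor}) the paper already uses. What your approach genuinely buys is modularity: once \Cref{thm:Conlon-Fox-Sudakov} is granted in kernel form, the hypergraph statement falls out without re-deriving the rare/rich machinery. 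What the paper's approach buys is that the exact same counting scaffold, combined with \Cref{lem:dominating-dominating} in place of \Cref{lem:sidorenko-dominating}, also yields \Cref{thm:main-domination}; your component-wise factorization of $\hm(F,G)$ does not extend to that case, since when $\calL_F(v_i)$ is an arbitrary sub-hypergraph of a connected dominating $M$ the inner count no longer splits over components.
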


\begin{remark}
    Note that every finite set of vertices is a Sidorenko $1$-graph, so the $r = 2$ case of \Cref{thm:main-sidorenko} is exactly the same as \Cref{thm:Conlon-Fox-Sudakov}.
\end{remark}

Observe that \Cref{thm:main-sidorenko} can produce a Sidorenko hypergraph from other Sidorenko hypergraphs with smaller uniformity. Hence, \Cref{thm:main-sidorenko} provides a new large class of Sidorenko hypergraphs inductively, and also gives a good upper bound on the Sidorenko exponents for certain hypergraphs.

One caveat of \Cref{thm:main-sidorenko} is that the link hypergraphs should be a disjoint union of Sidorenko hypergraphs. Hence, it is hard to apply for bounding Sidorenko exponents of hypergraphs with connected links. To overcome this problem, our second main theorem provides a more general upper bound on Sidorenko exponents of hypergraphs.

In order to state our theorem, we need a notion of \emph{dominating hypergraphs}.

\begin{definition}\label{def:dominating hypergraph}
    Let $r \geq 1$ and $F$, $F'$ be $r$-graphs. We say $F$ \emph{dominates} $F'$ if the following inequality holds for all $ r$-graphs $H$.
    \begin{equation*}
        t_F(H)^{1/e(F)} \geq t_{F'}(H)^{1/e(F')}.
    \end{equation*}
    If $F$ dominates all of its sub-hypergraphs with at least one edge, then we say $F$ is a \emph{dominating hypergraph}.
\end{definition}

Dominating hypergraphs are closely related to Gowers norms, norming, and weakly norming hypergraphs. Because of its strong domination properties in terms of homomorphism counting, it was revealed that the notion of dominating hypergraph plays a central role in extremal combinatorics. We refer the reader to~\cite{Finite-reflection,dominating-graphs,Hatami,Hatami-thesis} for more information about dominating hypergraphs.

We are now ready to state our second main theorem.

\begin{theorem}\label{thm:main-domination}
    Let $r \geq 2$ be an integer and $M$ be a dominating $(r-1)$-graph.
    Let $F$ be an $r$-graph with link profile $(\calL_F(v_1), \dots, \calL_F(v_t))$ such that $\calL_F(v_1) = M$ and for all $i \in [t]$, the link hypergraph $\calL_F(v_i)$ is a sub-hypergraph of $M$. Then we have the following inequality.
    \begin{equation*}
        s(F) \leq \sum_{i\in [t]} d_M(V(\calL_F(v_i))).
    \end{equation*}
\end{theorem}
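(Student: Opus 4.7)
The plan is to first enlarge $F$ to a hypergraph $F^*$ on the same vertex set and then to show that $F^*$ itself is Sidorenko, which forces $s(F)\le s(F^*)\le e(F^*)=S$. For each $i\in[t]$, let $\tilde M_i\subseteq M$ consist of all edges of $M$ that touch $W_i \defeq V(\calL_F(v_i))$; then $\calL_F(v_i)\subseteq\tilde M_i$ and $e(\tilde M_i)=d_M(W_i)$. Take $F^*$ to be the $r$-graph on $V(F)$ with link profile $(\tilde M_1,\dots,\tilde M_t)$. Because $F\subseteq F^*$ on the same vertex set, $t_F(H)\ge t_{F^*}(H)$ for every $r$-graph $H$, so the problem reduces to showing that $F^*$ is Sidorenko. (When $r=2$, this enlargement is exactly the one underlying the Conlon--Fox--Sudakov argument for bipartite graphs with a vertex complete to the other part.)

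To prove $F^*$ is Sidorenko, write
\[
t_{F^*}(H)=\mathbb{E}_\psi\prod_{i=1}^{t}T_i(\psi),\qquad T_i(\psi)\defeq\mathbb{E}_{x_i}\prod_{f\in E(\tilde M_i)}h(\psi(f)\cup\{x_i\}),
\]
where $\psi\colon V(M)\to V(H)$ and $x_i\in V(H)$ are uniform, $h$ is the edge indicator of $H$, and $\alpha\defeq t_{K_r^{(r)}}(H)$. I proceed by induction on $t$. The base case $t=1$ gives $F^*=v_1\ast M$, the cone over $M$: a single Jensen step in $x_1$, combined with the Sidorenko property of the dominating hypergraph $M$, yields $t_{F^*}(H)\ge\alpha^{e(M)}=\alpha^{d_M(W_1)}$. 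For the inductive step, set $A(\psi)\defeq\prod_{i<t}T_i(\psi)$ and introduce the tilted measure $d\mu(\psi)\propto A(\psi)\,d\psi$; then $t_{F^*}(H)=(\mathbb{E}_\psi A)\cdot(\mathbb{E}_\mu T_t)$, and the inductive hypothesis applied after deleting $v_t$ gives $\mathbb{E}_\psi A\ge\alpha^{S-d_M(W_t)}$. It then remains to show $\mathbb{E}_\mu T_t\ge\alpha^{d_M(W_t)}$, which I attempt by applying the dominating inequality $t_M(H')^{1/e(M)}\ge t_{\tilde M_t}(H')^{1/e(\tilde M_t)}$ to a suitably weighted $(r-1)$-graph $H'$ built from $\mu$ and the link $\calL_H(x_t)$, followed by Jensen in $x_t$.

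The hard part will be executing this weighted application of the dominating inequality so that the exponent $d_M(W_t)$ emerges sharply. Concretely, one has to construct $H'$ so that (i) $t_{\tilde M_t}(H')$ matches $\mathbb{E}_\mu T_t$, and (ii) $t_{K_{r-1}^{(r-1)}}(H')$ lower-bounds an appropriate power of $\alpha$; intuitively, the edges of $M$ entirely inside $V(M)\setminus W_t$ should integrate out of $T_t$'s expression without cost, so that only the $d_M(W_t)$ edges of $M$ touching $W_t$ contribute. Tracking this marginalization inside the tilted measure $\mu$, whose weights themselves come from cone densities over sub-hypergraphs of $M$, is the delicate step, and I expect it will require splitting $\psi$ according to $V(M)=W_t\sqcup(V(M)\setminus W_t)$ and alternating Jensen with the dominating inequality in a manner compatible with the inductive structure.
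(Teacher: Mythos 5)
Your reduction is sound as far as it goes: since $\calL_F(v_i)\subseteq\tilde M_i$ you do have $F\subseteq F^*$, $e(\tilde M_i)=d_M(W_i)$, and hence the theorem would follow if $F^*$ were Sidorenko. But note that this intermediate claim is \emph{strictly stronger} than the theorem: applying the theorem itself to $F^*$ only gives $s(F^*)\le\sum_i d_M(V(\tilde M_i))$, and $V(\tilde M_i)$ is in general larger than $W_i$, so this exceeds $e(F^*)$. Nothing in your argument (or in the paper) establishes that $F^*$ is Sidorenko, and the whole burden falls on the inductive step $\mathbb{E}_\mu T_t\ge\alpha^{d_M(W_t)}$ (writing $\alpha\defeq t_{K_r^{(r)}}(H)$), equivalently the exact conditional inequality $t_{F^*}(H)\ge t_{F^*-v_t}(H)\,\alpha^{d_M(W_t)}$, which you explicitly leave unexecuted. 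The tool you propose for it also points the wrong way: domination gives the \emph{upper} bound $t_{\tilde M_t}(H')\le t_M(H')^{e(\tilde M_t)/e(M)}$ on the density of the sub-hypergraph, so it cannot directly lower-bound a $\tilde M_t$-type density such as $\mathbb{E}_\mu T_t$; moreover the tilted measure $\mu$ is not a product measure on $V(H)^{V(M)}$, so there is no single $(r-1)$-graph (or even weighted kernel) $H'$ for which $\mathbb{E}_\mu T_t$ equals $t_{\tilde M_t}(H')$ in the sense the definition of domination requires, and ``dominating'' in the paper is only asserted for unweighted $H$ in the first place. So the delicate step you flag is a genuine gap, not a routine verification.

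The paper's proof deliberately avoids any such exact conditional statement. It only proves the weak bound $\hm(F,H)\ge c\left(t_{K_r^{(r)}}(H)\right)^{\sum_i d_M(V(\calL_F(v_i)))}n^{v(F)}$ with a constant $c=c(F)$, and then removes $c$ by the tensor power trick (\Cref{prop:tensor}). The exponent $d_M(V(\calL_F(v_i)))$ is produced by \Cref{lem:dominating-dominating}, where domination is applied to compare $M$ with $M-V(\calL_F(v_i))$ (delete the link's vertex set), so the dominated subgraph density appears in the \emph{denominator} of the ratio $\hm(M,\calD_H(u))/\hm(M-V(\calL_F(v_i)),\calD_H(u))$, which is then bounded below by $\left(t_{K_{r-1}^{(r-1)}}(\calD_H(u))\right)^{d_M(V(\calL_F(v_i)))}n^{v(\calL_F(v_i))}$; this is how the ``edges of $M$ away from $W_t$ integrate out for free''. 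That ratio bound feeds a rare/rich homomorphism and good/bad vertex dichotomy (\Cref{thm:unified}), giving an averaged substitute, with constant loss, for your clean conditional inequality. To complete your route you would either have to prove your conditional step (thereby proving the stronger and currently unestablished statement that $F^*$ is Sidorenko) or fall back on the paper's scheme of approximate counting inside the downward hypergraphs $\calD_H(v)$ followed by tensorization.
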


Our last main theorem is on a connection to Sidorenko exponents and an upper bound on the extremal number, as we promised in \Cref{subsec:background}. This provides another motivation to find upper bounds on Sidorenko exponents of hypergraphs. 

Let $F$ be an $(r-1)$-graph and $t > 0$ be a positive integer. We denote by $F(t)$ the $r$-graph such that its $r$-th part has $t$ vertices and its link profile is $(F, \dots, F)$, and inductively define $F(t_1, \dots, t_{\ell}) \defeq F(t_1 ,\dots, t_{\ell-1})(t_{\ell})$. Note that a complete $r$-partite $r$-graph $K^{(r)}_{t_1,\dots,t_{r}}$ is the same as $K^{(r-1)}_{t_1, \dots, t_{r-1}}(t_r) = K^{(2)}_{t_1, t_2}(t_3, \dots, t_r)$. Among the fundamental results in extremal combinatorics, there are K\H{o}v\'{a}ri--S\'{o}s--Tur\'{a}n theorem and Erd\H{o}s' theorem, which gives an upper bound on the extremal number of $K^{(r)}_{t_1, \dots, t_{r}}$.

\begin{theorem}[K\H{o}v\'{a}ri--S\'{o}s--Tur\'{a}n~\cite{Kovari-Sos-Turan} $(r = 2)$, Erd\H{o}s~\cite{Erdos-KST} $(r \geq 3)$]\label{thm:KST}
    Let $r \geq 2$ be a positive integer. Then for all $t > 0$, the following holds.
    \begin{equation*}
        \mathrm{ex}(n, K^{(r)}_{t_1, \dots, t_{r}}) \leq O_{t_1, \dots, t_{r}} \left(n^{r - \frac{1}{\prod_{i\in [r-1]}t_i}}\right).
    \end{equation*}
\end{theorem}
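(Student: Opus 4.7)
The plan is to prove the bound by induction on $r$ via a shadow-counting argument combined with convexity. Because $K^{(r)}_{t_1,\ldots,t_r}$ is symmetric under permutation of its parts, I may assume without loss of generality that $t_r = \max_i t_i$, so that the stated bound is strongest.

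For the base case $r=2$, this is the classical K\H{o}v\'{a}ri--S\'{o}s--Tur\'{a}n theorem, proved by counting stars: given a graph $G$ on $n$ vertices with $m$ edges, Jensen's inequality yields $\sum_v \binom{d(v)}{t_1} \geq n \binom{2m/n}{t_1}$, and once $m \geq C \cdot n^{2-1/t_1}$ with $C$ sufficiently large, this exceeds $(t_2-1)\binom{n}{t_1}$, so by pigeonhole some $t_1$-set has $t_2$ common neighbors, producing a copy of $K_{t_1,t_2}$.

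For the inductive step, suppose $H$ is an $r$-graph on $n$ vertices with $m \geq C \cdot n^{r - 1/\prod_{i\in[r-1]} t_i}$ edges. For each $(r-1)$-subset $S$ of $V(H)$, let $d(S) \defeq |\{v : S \cup \{v\} \in E(H)\}|$ denote its codegree; then $\sum_S d(S) = rm$. I count $\sum_S \binom{d(S)}{t_r}$, which is exactly the number of pairs $(T, S)$ where $T$ is a $t_r$-subset of $V(H)$, $S$ is a disjoint $(r-1)$-subset, and $T$ lies in the common codegree of $S$. By convexity of $x \mapsto \binom{x}{t_r}$ applied to the constraint $\sum_S d(S) = rm$, this sum is at least $\Omega\bigl(m^{t_r}/n^{(r-1)(t_r-1)}\bigr)$, provided the average codegree is at least $t_r$ (which holds once $C$ is large).

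By pigeonhole, there exists a $t_r$-set $T$ whose common link $\calL_T$ --- the $(r-1)$-graph on $V(H)\setminus T$ consisting of those $(r-1)$-sets $S$ with $T$ contained in the codegree of $S$ --- satisfies $e(\calL_T) = \Omega\bigl(m^{t_r}/n^{(r-1)(t_r-1)+t_r}\bigr)$. Substituting the assumed lower bound on $m$ and using $t_r \geq t_{r-1}$, a direct exponent calculation shows this is at least $C' n^{(r-1)-1/\prod_{i\in[r-2]} t_i}$ for $C'$ as large as desired. Applying the inductive hypothesis to $\calL_T$ with part sizes $t_1, \ldots, t_{r-1}$ yields a copy of $K^{(r-1)}_{t_1,\ldots,t_{r-1}}$ in $\calL_T$; pairing this copy with the vertices of $T$ produces the desired $K^{(r)}_{t_1,\ldots,t_r}$ inside $H$, contradicting the extremal assumption. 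The main technical care lies in the exponent bookkeeping across the induction and in verifying that the convexity step is applicable, both of which become routine once $C$ is chosen large enough in terms of $t_1,\ldots,t_r$.
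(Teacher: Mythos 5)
Your inductive approach is the classical Erd\H{o}s double-counting argument, which is genuinely different from the paper's route: the paper obtains \Cref{thm:KST} as a consequence of \Cref{cor:extremal-sidorenko} applied to the Sidorenko $(r-1)$-graph $K^{(r-1)}_{t_1,\dots,t_{r-1}}$, which in turn rests on \Cref{lem:Sidorenko-lifting,lem:main-extremal-one}. However, your inductive step contains a direction error in the exponent comparison that cannot be absorbed into the constant $C$.

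After peeling off a $t_r$-set $T$, the convexity and pigeonhole steps give $e(\calL_T) = \Omega\bigl(C^{t_r}\, n^{(r-1) - t_r/P}\bigr)$, where $P \defeq \prod_{i\in[r-1]} t_i$. To invoke the inductive hypothesis for $K^{(r-1)}_{t_1,\dots,t_{r-1}}$ inside $\calL_T$, you need $e(\calL_T) \geq C'\, n^{(r-1) - 1/\prod_{i\in[r-2]} t_i} = C'\, n^{(r-1) - t_{r-1}/P}$. Matching exponents requires $t_r/P \leq t_{r-1}/P$, i.e.\ $t_r \leq t_{r-1}$; but your WLOG sets $t_r = \max_i t_i$, so $t_r \geq t_{r-1}$, which is precisely the wrong inequality. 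You explicitly write ``using $t_r \geq t_{r-1}$,'' but this inequality makes the exponent of $e(\calL_T)$ \emph{smaller} than what the inductive hypothesis demands, so the induction does not close (except in the degenerate case $t_r = t_{r-1}$). The fix is to peel a \emph{different} part: keep the WLOG $t_r = \max_i t_i$ but peel a $t_j$-set for some $j < r$. Then $e(\calL_T) = \Omega\bigl(C^{t_j}\, n^{(r-1) - t_j/P}\bigr)$, while the inductive hypothesis applied to $K^{(r-1)}_{(t_i)_{i\neq j}}$, in its strongest form (dropping the largest remaining part, which is now $t_r$), requires $e(\calL_T) \geq C'\, n^{(r-1) - t_j/P}$; the exponents match exactly, and taking $C$ large enough closes the induction.
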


Our last main theorem is a generalization of \Cref{thm:KST}.

\begin{theorem}\label{thm:main-extremal}
    Let $r > \ell \geq 1$ be integers and $F$ be an $(r-\ell)$-graph. Then for all $t_1, \dots, t_{\ell} > 0$, the following holds.
    \begin{equation*}
        \mathrm{ex}(n, F(t_1, \dots, t_{\ell})) \leq O_{F, t_1, \dots, t_{\ell}}\left(n^{r - \frac{1}{s(F)\prod_{i\in [\ell - 1]} t_i}}\right).
    \end{equation*}
\end{theorem}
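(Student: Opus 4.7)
The plan is to give a unified double-counting argument combining the definition of the Sidorenko exponent with \Cref{thm:KST}, avoiding any induction on $\ell$. Let $H$ be an $r$-graph on $n$ vertices with $m = e(H)$. For each $\ell$-subset $W \subseteq V(H)$, write $\calL_H(W)$ for the common link $(r-\ell)$-graph on $V(H)\setminus W$ whose edges are the $(r-\ell)$-sets $f$ with $f\cup W\in E(H)$. The central quantity I will control is
\begin{equation*}
    S \defeq \sum_{W\in\binom{V(H)}{\ell}} \hm\bigl(F, \calL_H(W)\bigr),
\end{equation*}
which I estimate from above and below and then compare.

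For the upper bound I use the assumption that $H$ is $F(t_1,\dots,t_\ell)$-free. For each map $\phi\colon V(F)\to V(H)$, consider the $\ell$-graph
\begin{equation*}
    \mathcal{N}_\phi \defeq \bigl\{W \subseteq V(H)\setminus\phi(V(F)) : |W|=\ell,\ \phi(e)\cup W \in E(H)\ \forall e\in E(F)\bigr\}.
\end{equation*}
A sub-hypergraph copy of $F(t_1,\dots,t_\ell)$ in $H$ extending an injective $\phi$ is exactly a copy of $K^{(\ell)}_{t_1,\dots,t_\ell}$ with pairwise disjoint parts inside $\mathcal{N}_\phi$. Consequently \Cref{thm:KST} applied to each $\mathcal{N}_\phi$ yields $e(\mathcal{N}_\phi)\leq O\bigl(n^{\ell - 1/\prod_{i\in[\ell-1]}t_i}\bigr)$, and swapping the order of summation in $S = \sum_\phi e(\mathcal{N}_\phi)$ gives
\begin{equation*}
    S \leq O\Bigl(n^{v(F)+\ell - 1/\prod_{i\in[\ell-1]}t_i}\Bigr).
\end{equation*}

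For the lower bound I would use that $F$ is $(r-\ell)$-partite, so $t_F(\calL_H(W))>0$ whenever $\calL_H(W)$ has at least one edge; the definition of the Sidorenko exponent then supplies $\hm(F,\calL_H(W)) \geq \Omega\bigl(n^{v(F)-(r-\ell)s(F)}\, e(\calL_H(W))^{s(F)}\bigr)$. Since $s(F)\geq e(F) \geq 1$, summing over $W$ and applying Jensen's inequality to $x\mapsto x^{s(F)}$ together with the double-count identity $\sum_W e(\calL_H(W)) = \binom{r}{\ell}\,m$ yields $S \geq \Omega\bigl(m^{s(F)}\,n^{v(F)+\ell - rs(F)}\bigr)$. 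Comparing with the upper bound and solving for $m$ produces the desired inequality $m = O\bigl(n^{r - 1/(s(F)\prod_{i\in[\ell-1]}t_i)}\bigr)$.

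The step I expect to need the most care is the upper bound, where one needs $\mathcal{N}_\phi$ to be $K^{(\ell)}_{t_1,\dots,t_\ell}$-free in the sense required by \Cref{thm:KST}, not merely to lack copies whose $\ell$ parts are pairwise disjoint and avoid $\phi(V(F))$. A standard workaround is to invoke \Cref{thm:KST} with the slightly enlarged parameters $t_j + v(F)$ in place of $t_j$, which only changes the implicit constant. A minor further point is that the Sidorenko exponent bound applies only to those $W$ with $\calL_H(W)\neq\emptyset$, but since empty links contribute nothing to $\sum_W e(\calL_H(W))$, the Jensen step proceeds unchanged.
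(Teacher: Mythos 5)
Your argument is correct, but it follows a genuinely different route from the paper's. The paper proceeds in two steps: a lifting lemma (\Cref{lem:Sidorenko-lifting}), $s(F(t))\le t\cdot s(F)$, iterated $\ell-1$ times, followed by a base-case extremal bound (\Cref{lem:main-extremal-one}) proved by a pigeonhole argument; this keeps the proof self-contained and in fact \emph{re-derives} \Cref{thm:KST} rather than using it. You instead double-count the single quantity $S=\sum_{W}\hm(F,\calL_H(W))$ over $\ell$-sets $W$: the upper bound comes from observing that each $\mathcal{N}_\phi$ (for injective $\phi$) must be $K^{(\ell)}_{t_1,\dots,t_\ell}$-free and invoking \Cref{thm:KST} as a black box, and the lower bound comes from the Sidorenko-exponent inequality $t_F(\cdot)\ge t_{K^{(r-\ell)}_{r-\ell}}(\cdot)^{s(F)}$ combined with Jensen (valid since $s(F)\ge e(F)\ge 1$). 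Your route is shorter and avoids induction on $\ell$, and it makes transparent where the exponent comes from by directly matching the $\ell$-uniform KST exponent against the convexity loss coming from $F$; the trade-off is that you take \Cref{thm:KST} as given rather than obtaining it as a corollary. Two minor remarks. First, the caveat about enlarging the parameters to $t_j+v(F)$ is unnecessary: the vertex set of $\mathcal{N}_\phi$ already excludes $\phi(V(F))$, and an embedded copy of $K^{(\ell)}_{t_1,\dots,t_\ell}$ has pairwise disjoint parts by definition, so $\mathcal{N}_\phi$ is literally $K^{(\ell)}_{t_1,\dots,t_\ell}$-free and \Cref{thm:KST} applies as stated. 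Second, \Cref{thm:KST} is only stated for uniformity at least $2$, so the $\ell=1$ case of your upper bound needs the (trivial) observation that a $K^{(1)}_{t_1}$-free $1$-graph has fewer than $t_1$ edges; similarly, the contribution of non-injective $\phi$ should be mentioned and is absorbed, being $O(n^{v(F)+\ell-1})$, which is dominated since $1/\prod_{i\in[\ell-1]}t_i\le 1$.
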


Note that the exponent of $n$ in the inequality in \Cref{thm:main-extremal} does not depend on $t_{\ell}$. One direct corollary of \Cref{thm:main-extremal} with a probabilistic deletion method is the following.

\begin{corollary}\label{cor:extremal-sidorenko}
    Let $F$ be an Sidorenko $(r-1)$-graph. Then we have the following.
    \begin{equation*}
       n^{r - \frac{1}{e(F)} - o_t(1)} \leq \mathrm{ex}(n, F(t)) \leq O_{F, t}(n^{r - \frac{1}{e(F)}}).
    \end{equation*}
\end{corollary}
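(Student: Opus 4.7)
The upper bound is an immediate consequence of \Cref{thm:main-extremal}. Applying that theorem with $\ell = 1$ and $t_1 = t$, the empty product $\prod_{i \in [\ell-1]} t_i$ equals $1$, and since $F$ is Sidorenko we have $s(F) = e(F)$, so \Cref{thm:main-extremal} gives exactly $\mathrm{ex}(n, F(t)) \leq O_{F,t}(n^{r - 1/e(F)})$ with no further work.

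For the lower bound, I would use the standard probabilistic deletion method. Take the random $r$-graph on $n$ vertices where each edge is included independently with probability $p$; the expected number of edges is $\Theta(pn^r)$. Noting that $v(F(t)) = v(F) + t$ and $e(F(t)) = te(F)$, the expected number of (injective) copies of $F(t)$ in this random hypergraph is at most $n^{v(F)+t} p^{te(F)}$. I would then choose $p = c_t \cdot n^{-(v(F)+t-r)/(te(F)-1)}$ for a sufficiently small constant $c_t > 0$; this choice balances the two expectations, so that deleting one edge from each copy of $F(t)$ produces an $F(t)$-free hypergraph with at least $\Omega_t\bigl(n^{r - (v(F)+t-r)/(te(F)-1)}\bigr)$ edges.

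It remains to check that this exponent matches the one in the corollary. A direct computation gives
\begin{equation*}
    \frac{v(F)+t-r}{te(F)-1} - \frac{1}{e(F)} = \frac{e(F)(v(F)-r)+1}{e(F)(te(F)-1)} = O_F(1/t),
\end{equation*}
so the deletion bound becomes $n^{r - 1/e(F) - O_F(1/t)} = n^{r - 1/e(F) - o_t(1)}$, as required.

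There is no genuine obstacle here: the upper bound is a direct application of \Cref{thm:main-extremal}, and the lower bound is a textbook deletion argument for Sidorenko hypergraphs, lifted one uniformity up via the $F \mapsto F(t)$ construction. The only minor care required is in choosing $p$ so that the extra $+t$ in the vertex count and the $-1$ in the edge exponent contribute only a $1/t$ error, which is absorbed into the $o_t(1)$ term.
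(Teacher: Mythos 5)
Your proof is correct and takes exactly the approach the paper intends: the paper itself describes Corollary 1.9 as a "direct corollary of Theorem 1.8 with a probabilistic deletion method," which is precisely what you carry out. Your explicit exponent computation, showing that the deletion bound falls short of $r - 1/e(F)$ by exactly $\frac{e(F)(v(F)-r)+1}{e(F)(te(F)-1)} = O_F(1/t)$, correctly justifies the $o_t(1)$ term, and the upper bound is indeed an immediate application of Theorem 1.8 with $\ell = 1$ and $s(F) = e(F)$.
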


\begin{remark}
    Since complete bipartite graphs are Sidorenko, \Cref{cor:extremal-sidorenko} implies \Cref{thm:KST}. Indeed, our proof gives an alternative proof of \Cref{thm:KST}. The original proof of \Cref{thm:KST} uses a double counting argument with induction, but our proof explicitly uses Sidorenko-type inequalities for homomorphism counting, which allows a new connection between the Sidorenko exponents and extremal numbers of partite hypergraphs. 
\end{remark}

By combining all our main theorems, for certain hypergraphs, one can considerably improve the upper bounds of extremal numbers of them upon the bounds given by \Cref{thm:KST}. We collect several applications of our main theorems in \Cref{sec:applications}.
\newline

\paragraph{\textbf{Organization.}}
The rest of the paper is organized as follows. In \Cref{sec:applications}, we discuss several applications of our main theorems, and most of them will be proved in \Cref{sec:proof-applications}. In \Cref{sec:prelim}, we clarify our notations and definitions that we use throughout this paper. We also demonstrate the tensor power tricks and (weakly) norming, dominating hypergraphs in \Cref{sec:prelim}. The proofs of our main theorems are provided in \Cref{sec:proof}. Finally, we discuss further directions and some open problems in \Cref{sec:concluding}.

\section{Applications}\label{sec:applications}
In this section, we collect several applications of our main theorems discussed in \Cref{subsec:results}. Most of our applications are focused on bounding Sidorenko exponents of certain families of hypergraphs, including $3$-uniform tight cycles. We also obtained several general bounds for sparse hypergraphs. 

Denote $C^{(r)}_k$ by the $r$-uniform tight cycle of length $k$, that is $V(C^{(r)}_k) = [k]$ and edges are in form of $\{i, i+1, \dots, i+r-1\}$ for all $i\in [k]$, where addition is taken mod $k$. Recently, Conlon, Lee, and Sidorenko~\cite{Conlon-Lee-Sidorenko} shown that for all $\ell \geq 2$, the $3$-partite $3$-uniform tight cycle $C^{(3)}_{3\ell}$ are not Sidorenko $3$-graph.
\begin{theorem}[Conlon--Lee--Sidorenko~\cite{Conlon-Lee-Sidorenko}]\label{thm:tightcycle-not-sidorenko}
    For all integers $\ell \geq 2$, the following inequality holds.
    \begin{equation*}
        s\left(C^{(3)}_{3\ell} \right) > 3\ell.
    \end{equation*}
    In other words, every $3$-partite $3$-uniform tight cycles of length at least $6$ is a non-Sidorenko hypergraph.
\end{theorem}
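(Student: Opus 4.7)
The plan is, for each $\ell \geq 2$, to exhibit a $3$-graph $H_\ell$ witnessing $0 < t_{C^{(3)}_{3\ell}}(H_\ell) < t_{K^{(3)}_3}(H_\ell)^{3\ell}$ with $t_{K^{(3)}_3}(H_\ell) \in (0,1)$. Since solving $t_{C^{(3)}_{3\ell}}(H_\ell) = t_{K^{(3)}_3}(H_\ell)^s$ over a base in $(0,1)$ with a strictly smaller left-hand side forces $s > 3\ell$, such a witness gives $s(C^{(3)}_{3\ell}) > 3\ell$ directly from \Cref{def:sidorenko exponent}.

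I would search for the witness in the graphon framework, using a symmetric measurable kernel $W : [0,1]^3 \to [0,1]$ of structured algebraic form so that both $t_{K^{(3)}_3}(W) = \int W$ and $t_{C^{(3)}_{3\ell}}(W)$ can be written down explicitly via Fourier / representation theory. A first candidate is $W(x,y,z) = g(x+y+z \bmod 1)$ for some $g : \mathbb{T} \to [0,1]$: the cyclic edge structure of $C^{(3)}_{3\ell}$ collapses $t_{C^{(3)}_{3\ell}}(W)$ to a sum $\sum_{n_1+n_2+n_3=0} \hat{g}(n_1)^\ell \hat{g}(n_2)^\ell \hat{g}(n_3)^\ell$ over period-$3$ Fourier sequences. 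Unfortunately, this abelian family is ruled out by a Jensen-type convexity argument, so a richer construction is needed---for instance a tensor kernel $W(x,y,z) = K(x,y)\, K(y,z)\, K(z,x)$ for a symmetric $K : [0,1]^2 \to [0,1]$, which rewrites $t_{C^{(3)}_{3\ell}}(W)$ as a trace-type expression in an operator $T_K$ built from $K$. Choosing $K$ so that $T_K$ has complex eigenvalues whose $3\ell$-th powers interfere destructively, while keeping $\int W$ bounded away from zero, would secure the strict inequality.

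The main obstacle I anticipate is precisely this construction step: the Jensen/positivity constraints implicit in the graphon range $[0,1]$ rule out the cleanest abelian models, so one must introduce enough complex phase through a non-abelian or tensor-product structure to make the reduced trace formula genuinely oscillatory along the tight cycle. The restriction $\ell \geq 2$ should emerge organically: at $\ell = 1$ the three edges $\{i, i+1, i+2\}$ for $i\in\{1,2,3\}$ of $C^{(3)}_3$ all coincide with $\{1,2,3\}$, so no cancellation is available and the conclusion is (correctly) false there. Once a graphon-level strict inequality $t_{C^{(3)}_{3\ell}}(W) < t_{K^{(3)}_3}(W)^{3\ell}$ is established, passing to a finite $3$-graph $H_\ell$ by discretization or sampling yields the required witness and the theorem follows.
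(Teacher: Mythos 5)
This theorem is cited from Conlon--Lee--Sidorenko rather than proved in the present paper, so there is no in-paper proof to compare against; the proposal must be judged on its own merits.

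Your logical framework is correct: if one finds a $3$-graph $H$ with edge density $t_{K^{(3)}_3}(H) = p \in (0,1)$ and $0 < t_{C^{(3)}_{3\ell}}(H) < p^{3\ell}$, then the unique $s$ with $t_{C^{(3)}_{3\ell}}(H) = p^{s}$ exceeds $3\ell$, whence $s(C^{(3)}_{3\ell}) > 3\ell$ by the definition of $s(\cdot)$ as a supremum. Your observation that the abelian kernel $W(x,y,z) = g(x+y+z)$ cannot work is also correct and can be made precise: writing $c_a = \hat g(a)$, the sum $\sum_{a+b+c=0} c_a^{\ell} c_b^{\ell} c_c^{\ell}$ equals $\int (g^{*\ell})^3\,d\mu$, where $g^{*\ell}$ is the $\ell$-fold self-convolution, which is nonnegative; Jensen's inequality then gives $\int (g^{*\ell})^3 \geq \bigl(\int g^{*\ell}\bigr)^3 = \hat g(0)^{3\ell}$, so in fact $t_{C^{(3)}_{3\ell}}(W) \geq t_{K^{(3)}_3}(W)^{3\ell}$ for every such $W$. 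So your first candidate is genuinely dead, for exactly the reason you anticipate.

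The gap is that the proof then stops. The tensor kernel $W(x,y,z) = K(x,y)K(y,z)K(z,x)$ reduces $t_{C^{(3)}_{3\ell}}(W)$ to $\int \prod_{i=1}^{3\ell} K(x_i,x_{i+1})^2 K(x_i,x_{i+2})\,dx$, a homomorphism count of a $2$-regular-plus-chords \emph{weighted} graph into $K$, and $t_{K^{(3)}_3}(W)$ to the triangle density $t_{K_3}(K)$; whether one can choose $K : [0,1]^2 \to [0,1]$ symmetric so that the former is strictly smaller than $t_{K_3}(K)^{3\ell}$ is precisely the content of the theorem, and you have not exhibited such a $K$ nor ruled in or out any of the obvious constraints (for instance the same positivity/operator bounds that killed the abelian model may still bite). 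Without a concrete $K$ (or a finite $H_\ell$) and a verified strict inequality, the claim $s(C^{(3)}_{3\ell}) > 3\ell$ does not follow. The passage from a graphon-level strict inequality to a finite witness by discretization is fine and routine, and your remark that $\ell = 1$ degenerates because all three "edges" of $C^{(3)}_3$ coincide is accurate, but neither of these substitutes for the missing construction, which is the heart of Conlon--Lee--Sidorenko's result.
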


Motivated by \Cref{thm:tightcycle-not-sidorenko}, the natural question is how the $3$-uniform tight cycles are far from \eqref{eq:sidorenko}.
By using \Cref{thm:main-domination}, we bound the value of $s\left(C^{(3)}_{3\ell}\right)$ up to a multiplicative constant.

\begin{theorem}\label{thm:tight-3-cycle}
    For all $\ell \geq 2$, we have
    \begin{equation*}
        s\left(C^{(3)}_{3\ell}\right) \leq 7\ell.
    \end{equation*}
\end{theorem}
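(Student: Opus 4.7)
The plan is to apply Theorem~\ref{thm:main-domination} after augmenting $F = C^{(3)}_{3\ell}$ by a single auxiliary vertex so that the required dominating link graph appears explicitly. Fix the natural $3$-partition $V_i = \{j \in [3\ell] : j \equiv i \pmod 3\}$ and label $V_3 = \{v_1, \ldots, v_\ell\}$ by $v_j = 3j$. The three tight-cycle edges containing $v_j$ give
\begin{equation*}
    E(\calL_F(v_j)) = \bigl\{\{3j-2, 3j-1\},\ \{3j-1, 3j+1\},\ \{3j+1, 3j+2\}\bigr\}
\end{equation*}
(indices modulo $3\ell$), so each link is a path on four vertices of $V_1 \cup V_2$. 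Consecutive links $\calL_F(v_j)$ and $\calL_F(v_{j+1})$ share exactly the edge $\{3j+1, 3j+2\}$, so the union $M \defeq \bigcup_{j=1}^{\ell} \calL_F(v_j)$ is a $2$-uniform cycle on the $2\ell$ vertices of $V_1 \cup V_2$; that is, $M \cong C_{2\ell}$.

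Since even cycles are weakly norming, and in particular dominating, $M$ is a legitimate choice of dominating base graph. We enlarge $F$ to $F^+$ by adding a new vertex $v_0$ to the third part with link equal to $M$; the resulting link profile $(M, \calL_F(v_1), \ldots, \calL_F(v_\ell))$ satisfies the hypotheses of Theorem~\ref{thm:main-domination}, which yields
\begin{equation*}
    s(F^+) \leq d_M(V(M)) + \sum_{j=1}^{\ell} d_M\bigl(V(\calL_F(v_j))\bigr).
\end{equation*}
The first term is $e(M) = 2\ell$. Each set $V(\calL_F(v_j)) = \{3j-2, 3j-1, 3j+1, 3j+2\}$ is a block of four consecutive vertices along the cycle $M$, and exactly five edges of $M$ meet this set (the three edges of the $P_4$ together with the two edges of $M$ continuing around the cycle on either side). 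Therefore the sum contributes $5\ell$, yielding $s(F^+) \leq 2\ell + 5\ell = 7\ell$.

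To transfer the bound from $F^+$ to $F$, we observe that any homomorphism $F^+ \to H$ restricts to a homomorphism $F \to H$ and admits at most $v(H)$ extensions via the image of $v_0$, so $t_{F^+}(H) \leq t_F(H)$ for every $3$-graph $H$. Combined with the inequality $t_{F^+}(H) \geq t_{K^{(3)}_3}(H)^{7\ell}$ coming from $s(F^+) \leq 7\ell$, this gives $s(F) \leq 7\ell$. The main conceptual step is identifying the right dominating graph $M$: the individual $P_4$ links are not themselves dominating, so one is forced to pass to their cyclic union $C_{2\ell}$ and pay the extra $d_M(V(M)) = 2\ell$ cost in the augmentation, with the final number $7 = 2 + 5$ read off directly from the cycle geometry. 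A minor technical subtlety is the edge case $t_F(H) > 0 = t_{F^+}(H)$, which is handled by a standard continuity/perturbation argument in the graphon limit.
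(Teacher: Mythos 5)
Your proof is correct and takes essentially the same route as the paper: augment $C^{(3)}_{3\ell}$ by a new vertex in the third part whose link is the cycle $C_{2\ell}$ formed by the union of the $P_4$ links, apply Theorem~\ref{thm:main-domination} to get $s(F') \leq d_M(V(M)) + \sum_i d_M(V(\calL_F(v_i))) = 2\ell + 5\ell = 7\ell$, and pass to $F$ by subgraph monotonicity. The closing aside about a graphon-limit perturbation is superfluous---Observation~\ref{obs:subgraph} already yields $s(F) \leq s(F')$ directly, with no edge case to handle.
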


For general hypergraphs, a naive approach to obtaining an almost trivial bound on Sidorenko exponents is to use the fact that complete $r$-partite $r$-graphs are Sidorenko. Let $F$ be an $r$-graph on vertex partition $V_1 \cup \cdots \cup V_r$ with $|V_i| = t_i$. Since $F$ is a sub-hypergraph of $K^{(r)}_{t_1, \dots, t_r}$ and $K^{(r)}_{t_1, \dots, t_r}$ is Sidorenko, one can deduce that $s(F) \leq \prod_{i\in [r]} t_i$. Our next theorem shows that if $F$ has a subquadratic number of edges, then this trivial upper bound is improved.

\begin{theorem}\label{thm:sparse}
    Let $F$ be an $r$-partite $r$-graph with $e(F) = c v(F)$, where $c$ is a positive real number. Then we have
    \begin{equation*}
        s(F) \leq 2 c \cdot v(F)^{r-1}.
    \end{equation*}
\end{theorem}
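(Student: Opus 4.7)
The strategy is to apply \Cref{thm:main-domination} to a super-hypergraph $F^\ast \supseteq F$ formed by adjoining a single new vertex in $V_r$. Using the multiplicativity $t_{F_1 \sqcup F_2}(H) = t_{F_1}(H) t_{F_2}(H)$ one has $s(F_1 \sqcup F_2) = s(F_1) + s(F_2)$, and $s$ is clearly unchanged by adding or removing isolated vertices; therefore I would first reduce to the case where $F$ is connected with no isolated vertices. Under this reduction, since every vertex has positive degree and $\sum_{u \in V_i} d_F(u) = e(F)$ for each part, we obtain the crucial bound $t_i \defeq |V_i(F)| \leq e(F)$ for each $i \in [r]$.

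Next, let $M = K^{(r-1)}_{t_1, \ldots, t_{r-1}}$ be the complete $(r-1)$-partite $(r-1)$-graph on the first $r-1$ parts of $F$. Complete multipartite hypergraphs are weakly norming, hence dominating. Defining $F^\ast = F \cup \{e \cup \{v_0\} : e \in E(M)\}$ for a new vertex $v_0 \in V_r$ gives $\calL_{F^\ast}(v_0) = M$ and $\calL_F(v) \subseteq M$ for all $v \in V_r(F)$ automatically. Since $F \subseteq F^\ast$ on a common vertex set, $s(F) \leq s(F^\ast)$, and \Cref{thm:main-domination} yields
\[
s(F^\ast) \leq e(M) + \sum_{v \in V_r(F)} d_M(V(\calL_F(v))).
\]
The first term is $e(M) = \prod_{i=1}^{r-1} t_i \leq e(F) \cdot v(F)^{r-2}$, obtained by bounding the smallest $t_i$ by $e(F)$ and the remaining $r-2$ factors by $v(F)$.

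For the second term, writing $k_i(v) = |V(\calL_F(v)) \cap V_i|$ and applying the Bernoulli-type inequality $\prod_i (t_i - k_i(v)) \geq \prod_i t_i - \sum_i k_i(v) \prod_{j \neq i} t_j$ gives $d_M(V(\calL_F(v))) \leq \sum_i k_i(v) \prod_{j \neq i} t_j$. Summing over $v$ and using the double-counting bound $\sum_v k_i(v) \leq e(F)$ (valid since $\sum_v k_i(v)$ counts the distinct $(V_i, V_r)$-pairs co-occurring in some edge of $F$, and each edge contributes one such pair) yields
\[
\sum_{v \in V_r(F)} d_M(V(\calL_F(v))) \leq e(F) \cdot e_{r-2}(t_1, \ldots, t_{r-1}),
\]
where $e_{r-2}$ is the $(r-2)$-th elementary symmetric polynomial. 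By Maclaurin's inequality, $e_{r-2}(t_1, \ldots, t_{r-1}) \leq \binom{r-1}{r-2}(v(F)/(r-1))^{r-2} \leq v(F)^{r-2}$ for $r \geq 2$. Combining the two estimates gives $s(F) \leq s(F^\ast) \leq 2 e(F) v(F)^{r-2} = 2c \cdot v(F)^{r-1}$. The main obstacle is producing the factor $v(F)^{r-2}$ in the sum, rather than the naive $(r-1) v(F)^{r-2}$ one would get by bounding each $\prod_{j \neq i} t_j \leq v(F)^{r-2}$ separately; this improvement rests on Maclaurin's inequality applied to the elementary symmetric polynomial and is precisely what makes the final constant equal to $2$ uniformly in $r$.
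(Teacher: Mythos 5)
Your proof is correct and follows essentially the same route as the paper: both adjoin a new vertex to $F$ whose link is $K^{(r-1)}_{t_1,\dots,t_{r-1}}$, apply \Cref{thm:main-domination} with this complete $(r-1)$-partite dominating graph, and then bound $e(M)+\sum_v d_M(V(\calL_F(v)))$ by $2e(F)\,v(F)^{r-2}$. The differences are only in the elementary estimates in the middle --- you use Maclaurin's inequality and a per-vertex double count $\sum_v k_i(v)\leq e(F)$, where the paper uses Muirhead's inequality and the per-link bound $|V_j\cap V(L_i)|\leq e(L_i)$; also, your connectedness reduction is unnecessary, and the claimed equality $s(F_1\sqcup F_2)=s(F_1)+s(F_2)$ should be stated as the inequality $s(F_1\sqcup F_2)\leq s(F_1)+s(F_2)$, which is all you use.
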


Indeed, one can improve \Cref{thm:sparse} if $F$ is a very unbalanced $r$-graph, but for convenience, we state \Cref{thm:sparse} as above.

Our method provides a general approach to bound Sidorenko exponents for a broad family of hypergraphs, especially when their link profiles consist of sparse hypergraphs. To apply \Cref{thm:main-domination}, we should find a sufficiently sparse dominating hypergraph that contains all link hypergraphs of the given hypergraph. Unfortunately, there is a lot of uncertainty about dominating hypergraphs. Hence, it seems hard to find a clean statement that covers a large sensible family of hypergraphs aside from $3$-uniform tight cycles. Instead, we illustrate one example that shows how our mechanism can be applied to various hypergraphs.

For an integer $k \geq 2$, we say a graph is $(k\times k)$-grid if it is obtained by a cartesian product of two paths of $k$ vertices.

\begin{theorem}\label{thm:grid}
    Let $F$ be a $3$-partite $3$-graph with link profile $(G_1, \dots, G_t)$ such that $\bigcup_{i\in [t]} G_i$ is a subgraph of $(k \times k)$-grid. Then we have
    \begin{equation*}
        s(F) \leq 8 e(F) + 8 k^2.
    \end{equation*}
\end{theorem}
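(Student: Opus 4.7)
The plan is to apply \Cref{thm:main-domination} with a dominating $2$-graph $M$ chosen to contain every $G_i$ as a subgraph. The $(k\times k)$-grid itself is the most natural candidate, but it is not known to be dominating; instead, I would take $M$ to be the toroidal grid $C_{2k}\square C_{2k}$. Since even cycles are weakly norming and the Cartesian product of weakly norming graphs is weakly norming, $M$ is weakly norming and hence dominating. Moreover, the $(k\times k)$-grid embeds as a sub-graph of $M$ (pick $k$ consecutive vertices on each $C_{2k}$), so every $G_i$ is a sub-graph of $M$.

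To meet the hypothesis of \Cref{thm:main-domination}, which requires one of the link hypergraphs to equal $M$, I would enlarge $F$ to $F'$ by appending a new vertex $v_0$ in the third part whose link is exactly $M$. Adding $v_0$ to $F$ as an isolated vertex leaves $t_F(\cdot)$ unchanged, and with respect to the common vertex set the (enlarged) $F$ is a sub-hypergraph of $F'$. Because $E(F)\subseteq E(F')$ forces every $F'$-homomorphism to $H$ to also be an $F$-homomorphism, $t_{F'}(H)\leq t_F(H)$; dividing logarithms by $\log t_{K_3^{(3)}}(H) < 0$ then yields the monotonicity $s(F)\leq s(F')$. Now \Cref{thm:main-domination} applied to $F'$ with dominating graph $M$ gives
\begin{equation*}
    s(F)\leq s(F')\leq d_M(V(M)) + \sum_{i\in [t]} d_M(V(G_i)).
\end{equation*}

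The remaining step is arithmetic. The torus $M$ is $4$-regular on $4k^2$ vertices, so $d_M(V(M)) = e(M) = 8k^2$. Since every vertex of $M$ has degree at most $4$, $d_M(V(G_i)) \leq 4|V(G_i)|$, and because link hypergraphs are isolated-vertex-free by definition, $|V(G_i)|\leq 2e(G_i)$. Summing over $i$ and using $\sum_i e(G_i) = e(F)$ yields $\sum_i d_M(V(G_i)) \leq 8e(F)$, and the claimed bound $s(F)\leq 8e(F)+8k^2$ follows. The main obstacle I would watch for is identifying a dominating graph that contains the grid with the right constants on both edge count and maximum degree; the natural Cartesian-product realization of the grid, combined with the product closure of weakly norming graphs, makes $C_{2k}\square C_{2k}$ just tight enough to yield the stated constants.
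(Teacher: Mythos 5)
Your overall strategy — enlarge $F$ by an extra apex vertex whose link is a small dominating graph $M$ containing the grid, apply \Cref{thm:main-domination}, then bound $d_M(V(G_i))$ via degree $\cdot$ $|V(G_i)|$ and $|V(G_i)|\le 2e(G_i)$ — is exactly the paper's strategy, and the final arithmetic is identical. However, there is a genuine gap in your choice and justification of $M$. You take $M = C_{2k}\,\square\,C_{2k}$, the Cartesian product (toroidal grid), and assert that the Cartesian product of weakly norming graphs is weakly norming. That is not a known theorem and is not what \Cref{prop:weakly-tensor} says: Hatami's closure result is for the \emph{tensor} (categorical) product $\otimes$, not for $\square$. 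There is no result in the literature, and none cited in this paper, asserting that $C_{2k}\,\square\,C_{2k}$ is weakly norming or even dominating; indeed the only Cartesian products known to be weakly norming are the hypercubes $K_2^{\square n}$, and those are handled by a special argument, not by a general product-closure theorem. Without a dominating $M$, \Cref{thm:main-domination} does not apply, so the proof as written does not go through.

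The fix, which is what the paper does, is to take $M = T_k \defeq C_{2k}\otimes C_{2k}$. By \Cref{prop:weakly-tensor} this tensor product is weakly norming, hence dominating, and like your $\square$-product it is $4$-regular on $4k^2$ vertices, so $e(M)=8k^2$ and the rest of your arithmetic is unchanged. The price you pay is that the $(k\times k)$-grid does not sit inside $T_k$ in the trivial way it sits inside the Cartesian torus: since edges of $T_k$ join $(i,j)$ to $(i',j')$ only when \emph{both} coordinates differ by $1$ modulo $2k$, you need a ``rotated'' embedding of the grid, e.g.\ the vertex set $\{(k+i-j-1,\, i+j-1): i,j\in[k]\}$ induces a copy of the grid. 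So the embedding step, which you describe as the trivial part, is actually where the small amount of work lies once you use the correct product. I would also tighten the wording of your monotonicity argument $s(F)\le s(F')$: rather than dividing logarithms (which requires care about the sign and about the supremum being attained), just invoke \Cref{obs:subgraph}, which is exactly the subgraph monotonicity of $s(\cdot)$ stated in the paper.
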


We note that for all $k \geq 3$, a $(k \times k)$-grid is not a dominating hypergraph due to \cite[Proposition 2.3]{dominating-graphs}. In fact, the proof of \Cref{thm:grid} starts with finding a small and sparse dominating graph that contains $(k \times k)$-grid as a subgraph (see \Cref{sec:proof-applications}). In this context, \Cref{thm:grid} and its proof suggest a way to apply our main theorems together with finding dominating hypergraphs by using several methods to construct dominating hypergraphs, which will be discussed in \Cref{subsec:norming-dominating,sec:proof-applications}. 

We summarize this section with a quick application of \Cref{thm:main-extremal}.

\begin{theorem}\label{thm:bipartite-links}
    Let $F$ be a $3$-graph on the vertex partition $V_1 \cup V_2 \cup V_3$ and $G$ be a bipartite graph. Assume for all $v\in V_3$, the link graph $\calL_F(v)$ is a subgraph of $G$. Then the following holds.
    \begin{equation*}
        \mathrm{ex}(n, F) \leq O_F\left( n^{3 - \frac{1}{e(G) + v(G)/2}} \right).
    \end{equation*}
\end{theorem}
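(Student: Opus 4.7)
The plan is to embed $F$ into a 3-graph of the form $H(t)$ where $H$ is a Sidorenko bipartite graph that contains $G$, and then invoke \Cref{thm:main-extremal}. Since $G$ is bipartite, I fix a bipartition $V(G) = A \cup B$ with $|A| \leq |B|$, so that $|A| \leq v(G)/2$. Let $G^+$ be obtained from $G$ by adjoining one new vertex $u$ to the part containing $B$ and joining $u$ to every vertex of $A$. Then $G^+$ has the vertex $u$ complete to the opposite part, so \Cref{thm:Conlon-Fox-Sudakov} applies and shows that $G^+$ is Sidorenko. In particular,
\begin{equation*}
    s(G^+) = e(G^+) = e(G) + |A| \leq e(G) + v(G)/2.
\end{equation*}

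Next, set $t = |V_3|$ and observe that $F$ is a sub-hypergraph of $G^+(t)$. Indeed, the 3-graph $G^+(t)$ has link profile equal to the constant $t$-tuple $(G^+, \dots, G^+)$, and by hypothesis each $\calL_F(v)$ for $v \in V_3$ is a subgraph of $G \subseteq G^+$; identifying $V_1 \cup V_2$ with the corresponding vertices of $G^+$ and $V_3$ with the third part of $G^+(t)$ produces the embedding. Consequently, $\mathrm{ex}(n, F) \leq \mathrm{ex}(n, G^+(t))$.

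Finally, I would apply \Cref{thm:main-extremal} with $r = 3$, $\ell = 1$, $t_1 = t$, and the $(r-\ell)$-graph taken to be $G^+$. Since the empty product $\prod_{i \in [\ell - 1]} t_i$ equals $1$, this yields
\begin{equation*}
    \mathrm{ex}(n, G^+(t)) \leq O_{G^+, t}\left(n^{3 - 1/s(G^+)}\right) \leq O_F\left(n^{3 - 1/(e(G) + v(G)/2)}\right),
\end{equation*}
where the second inequality uses the bound on $s(G^+)$ from the first step and the fact that $G^+$ and $t$ are determined by $F$ (and so can be absorbed into the $O_F$ notation).

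The only substantive idea is the construction of the Sidorenko envelope $G^+$, which costs only $|A| \leq v(G)/2$ additional edges over $G$; everything else is a direct application of \Cref{thm:Conlon-Fox-Sudakov} and \Cref{thm:main-extremal}, so no significant obstacle is expected.
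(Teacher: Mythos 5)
Your proof is correct and follows essentially the same route as the paper: the paper also embeds $F$ into $G(v(F))$ and applies \Cref{thm:main-extremal}, citing the bound $s(G)\le e(G)+v(G)/2$ as ``a direct corollary of \Cref{thm:Conlon-Fox-Sudakov}'' --- which is precisely the $G^+$ envelope argument you spell out explicitly. The only cosmetic difference is that you apply \Cref{thm:main-extremal} to $G^+$ (where $s(G^+)=e(G^+)$ is exact) rather than to $G$ (where $s(G)$ is merely bounded); both yield the identical exponent.
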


\begin{proof}[Proof of \Cref{thm:bipartite-links}]
    Observe that $F$ is a sub-hypergraph of $G(v(F))$. By \Cref{thm:main-extremal}, we deduce the following.
    \begin{equation}\label{eq:bipartite-links}
        \mathrm{ex}(n, G(v(F))) \leq O_F\left(n^{3 - \frac{1}{s(G)}}\right).
    \end{equation}
    As a direct corollary of \Cref{thm:Conlon-Fox-Sudakov},  the Sidorenko exponent of $G$ is bounded above by $e(G) + \frac{v(G)}{2}$. Hence, together with \eqref{eq:bipartite-links}, we obtain
    \begin{equation*}
        \mathrm{ex}(n, F) \leq \mathrm{ex}(n, G(v(F))) \leq O_F\left( n^{3 - \frac{1}{e(G) + v(G)/2}} \right).
    \end{equation*}
    This completes the proof.
\end{proof}


\section{Preliminaries}\label{sec:prelim}

\subsection{Notations}\label{subsec:notations}

In this paper, we use standard notations in hypergraph theory. Let $r \geq 1$ be a positive integer. For an $r$-graph $F$, we denote by $V(F)$ and $E(F)$ the set of vertices and edges of $F$, respectively. We also write their sizes as $v(F)$ and $e(F)$. As we already defined in \Cref{subsec:results}, for a vertex subset $U\subseteq V(F)$, we denote $d_F(U)$ the number of edges of $F$ that contain at least one vertex of $U$. If $U$ is a set of single vertex $\{v\}$, then we simply write it as $d_F(v)$, which is the same as the degree of $v$ in $F$. We denote by $\Delta(F)$ and $\delta(F)$ the maximum and minimum degree of $F$, respectively. For a vertex subset $U\subseteq V(F)$, we write $F - U$ as a hypergraph obtained from $F$ by removing all vertices in $U$.

Let $F$ be an $r$-graph and $F'$ be a sub-hypergraph of $F$. For a homomorphism $\phi\in \Hm(F, H)$, we denote by $\phi\vert_{F'}$ the restriction of $\phi$ on $F'$. That is, $\phi\vert_{F'}$ is a homomorphism of $F'$ in $H$ such that for all $e\in E(F')$, $\phi\vert_{F'}(e) = \phi(e)$.
 
Let $r \geq 2$, $F$ be an $r$-graph, and $S$ be an $(r-1)$-graph. Then we denote $N_F(S)$ as the set of vertices of $V(F)$ such that every edge of $S$ forms an edge in $H$ together with those vertices. For each vertex $v\in V(F)$, we define \emph{downward hypergraph} $\calD_F(v)$ as an $(r-1)$-graph, which is similar to the link hypergraph, such that
$$
    V(\calD_F(v)) \defeq V(F)
$$
and
$$
    E(\calD_F(v)) \defeq \{f\in V(\calD_F(v))^{(r-1)}: f\cup \{v\} \in E(F)\}.
$$ 
Note that the only difference between downward hypergraphs and link hypergraphs is their vertex sets. We define those hypergraphs independently to pursue notational convenience in the rest of the paper.


\subsection{Tensor power trick}\label{subsec:tensor}
There is a technique, so-called \emph{tensor power trick}, which is a very useful argument when proving that a weak version of a certain homomorphism inequality indeed implies a stronger inequality. This technique was used in the elementary proof that trees are Sidorenko (this implies the well-known Blakley-Roy inequality~\cite{Blakley-Roy}) by Alon and Ruzsa~\cite{Alon-Ruzsa}. The tensor power trick has also been used in various areas. We refer to Tao's book~\cite{Tao} for a number of applications of this technique. This technique is now well-known, but for the completeness of the paper, we demonstrate the technique.

Let $F_1$ and $F_2$ are two $r$-graphs. We define \emph{tensor product} $F_1 \otimes F_2$ to be an $r$-graph on the vertex set $V(F_1) \times V(F_2)$ such that $((a_1, b_1), \dots, (a_r, b_r)) \in E(F_1 \otimes F_2)$ if and only if $(a_1, \dots, a_r)\in E(F_1)$ and $(b_1, \dots, b_r)\in E(F_2)$. For a positive integer $k > 0$, we define the $k$-fold tensor power $F^{\otimes k}$ by the $r$-graph $F \otimes \cdots \otimes F$, applying the tensor product $k$ times. Then we have the following observation, which is straightforward from the definitions.

\begin{observation}\label{obs:tensor}
    Let $F$ and $H$ be $r$-graphs and $k > 0$ be a positive integer. Then we have
    \begin{equation*}
        t_F(H^{\otimes k}) = t_F(H)^k.
    \end{equation*}
\end{observation}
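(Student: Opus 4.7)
The plan is to unpack the definition of the tensor product and establish a clean bijection between $\Hm(F, H^{\otimes k})$ and the $k$-fold Cartesian product $\Hm(F, H)^k$; once that is in place, the identity follows by comparing numerators and denominators in the definition of $t_F(\cdot)$.

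First, I would observe that by definition $V(H^{\otimes k}) = V(H)^k$, so any map $\phi : V(F) \to V(H^{\otimes k})$ can be written uniquely as $\phi = (\phi_1, \dots, \phi_k)$, where $\phi_i : V(F) \to V(H)$ is obtained by composing $\phi$ with the projection onto the $i$-th coordinate. Next I would verify the edge condition: by the definition of tensor product (applied inductively across the $k$ factors), an $r$-tuple $\bigl((a_1^{(1)}, \dots, a_1^{(k)}), \dots, (a_r^{(1)}, \dots, a_r^{(k)})\bigr)$ lies in $E(H^{\otimes k})$ if and only if $(a_1^{(i)}, \dots, a_r^{(i)}) \in E(H)$ for every $i \in [k]$. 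Consequently, $\phi \in \Hm(F, H^{\otimes k})$ if and only if $\phi_i \in \Hm(F, H)$ for each $i \in [k]$, which yields the counting identity
\begin{equation*}
    \hm(F, H^{\otimes k}) = \hm(F, H)^k.
\end{equation*}

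Finally, since $v(H^{\otimes k}) = v(H)^k$, dividing by $v(H^{\otimes k})^{v(F)} = v(H)^{k \cdot v(F)}$ gives
\begin{equation*}
    t_F(H^{\otimes k}) = \frac{\hm(F, H)^k}{v(H)^{k \cdot v(F)}} = \left( \frac{\hm(F, H)}{v(H)^{v(F)}} \right)^k = t_F(H)^k,
\end{equation*}
as desired. There is really no obstacle here: this is a direct unpacking of definitions, and the only point worth stating carefully is the componentwise characterization of edges in $H^{\otimes k}$, which cleanly justifies both the bijection on homomorphisms and the multiplicativity of vertex counts.
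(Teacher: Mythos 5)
Your proof is correct and is exactly the direct unpacking of definitions that the paper intends (the paper omits the proof, noting only that the observation is ``straightforward from the definitions''). The componentwise bijection $\Hm(F, H^{\otimes k}) \cong \Hm(F, H)^k$ together with $v(H^{\otimes k}) = v(H)^k$ is the standard and complete argument.
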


By using \Cref{obs:tensor}, we obtain the following proposition, which will be used in the proof of \Cref{thm:main-sidorenko,thm:main-domination}.

\begin{proposition}\label{prop:tensor}
    Let $F$ be an $r$-graph and $s \geq 0$ be a real number. Assume there exists a positive constant $c > 0$ such that for all $r$-graph $H$, the inequality $t_F(H) \geq c \cdot  t_{K_r^{(r)}}(H)^s$ always hold. Then 
    \begin{equation*}
        s(F) \leq s.
    \end{equation*}
\end{proposition}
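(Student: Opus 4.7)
The plan is to use the tensor power trick of Observation \ref{obs:tensor} to absorb the constant $c$ and then conclude $s(F) \leq s$ by contradiction.

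First I would apply the hypothesis to $H^{\otimes k}$ in place of $H$, for each positive integer $k$. Observation \ref{obs:tensor} converts both homomorphism densities into $k$-th powers, turning the inequality into
\begin{equation*}
    t_F(H)^k \geq c \cdot t_{K_r^{(r)}}(H)^{sk}.
\end{equation*}
Taking $k$-th roots and letting $k \to \infty$ sends $c^{1/k} \to 1$, so the weak hypothesis upgrades to the constant-free inequality $t_F(H) \geq t_{K_r^{(r)}}(H)^s$ valid for every $r$-graph $H$.

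Next I would suppose for contradiction that $s(F) > s$. By the definition of $s(F)$ as a supremum there must exist $s' > s$ and an $r$-graph $H$ realising $t_F(H) = t_{K_r^{(r)}}(H)^{s'} > 0$. Substituting into the constant-free inequality yields $t_{K_r^{(r)}}(H)^{s'} \geq t_{K_r^{(r)}}(H)^{s}$, and this is impossible once one notes that $0 < t_{K_r^{(r)}}(H) < 1$: positivity of $t_F(H)$ forces $t_{K_r^{(r)}}(H) > 0$, while the strict upper bound comes from the fact that edges of an $r$-graph are $r$-element sets, so $r$-tuples with repeated coordinates cannot be edges of $H$.

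The main step is just the tensor power calculation in the first paragraph; the only detail to double-check is the strict inequality $t_{K_r^{(r)}}(H) < 1$ so that the final exponent comparison fails strictly, which is immediate under the standard hypergraph conventions adopted in the paper.
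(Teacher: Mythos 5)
Your proof is correct and uses essentially the same tensor power trick as the paper; the paper phrases the amplification as a contradiction (a single $H$ violating the constant-free bound would, after enough tensor powers, violate even the weak bound), while you take $k$-th roots and pass to the limit, but these are the same idea. One point in your favour is that you spell out the final deduction, including why $0 < t_{K_r^{(r)}}(H) < 1$, which the paper leaves as ``directly follows from the definition.''
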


\begin{proof}[Proof of \Cref{prop:tensor}]
    If $c \geq 1$, then $s(F) \leq s$ directly follows from the definition of the Sidorenko exponent. Thus, we may assume that $c < 1$.
    
    Take a non-empty $r$-graph $H$ such that $t_F(H) < t_{K_r^{(r)}}(H)^s$. Let $c' \defeq t_F(H) / t_{K_r^{(r)}}(H)^s$. Note that $0 < c' < 1$. Then, for all positive integers $k > 0$, the following holds by \Cref{obs:tensor}.
    \begin{equation*}
        t_F(H^{\otimes k}) = t_F(H)^k = c'^k t_{K_r^{(r)}}(H)^{sk} = c'^k t_{K_r^{(r)}}(H^{\otimes k})^{s}.
    \end{equation*}
    Since $c' < 1$, for sufficiently large $k$, we deduce
    \begin{equation*}
        t_F(H^{\otimes k}) < c \cdot t_{K_r^{(r)}}(H^{\otimes k})^{s},
    \end{equation*}
    a contradiction. This implies that for all $r$-graph $H$, we have $t_F(H) \geq t_{K_r^{(r)}}(H)^s$. This completes the proof.
\end{proof}


\subsection{Norming and dominating hypergraphs}\label{subsec:norming-dominating}

In this section, we collect several techniques to prove that some hypergraphs are dominating by using a stronger notion, called \emph{weakly norming} hypergraphs.

Let $F$ be an $r$-graph and $f: [0, 1]^r \to \mathbb{R}$ be a bounded Lebesgue measurable function. Define functionals $\lVert \cdot \rVert_F$ and $\lVert \cdot \rVert_{w(F)}$ respectively as follows.
\begin{equation*}
    \lVert f \rVert_F \defeq \biggl\lvert \int \prod_{(v_1, \dots, v_r)\in E(F)} f(x_{v_1}, \dots, x_{v_r}) d\mu^{v(F)} \biggr\rvert^{\frac{1}{e(F)}}
\end{equation*}
and
\begin{equation*}
    \lVert f \rVert_{w(F)} \defeq \biggl( \int \prod_{(v_1, \dots, v_r)\in E(F)} \lvert f(x_{v_1}, \dots, x_{v_r}) \rvert d\mu^{v(F)} \biggr)^{\frac{1}{e(F)}},
\end{equation*}
where $\mu$ is an Lebesgue measure on $[0, 1]$.
If the functional $\lVert \cdot \rVert_F$ is a (semi-)norm, then we say $F$ is \emph{norming}, and if $\lVert \cdot \rVert_{w(F)}$ is a norm, then we say $F$ is \emph{weakly norming}. Note that if $F$ is norming, then so is weakly norming.

The concepts of norming and weakly norming hypergraphs are crucial in the area related to homomorphism counting since those norming properties guarantee various strong domination inequalities. For instance, if $F$ is a weakly norming, then it is also dominating, so is Sidorenko. Indeed, Lov\'{a}sz~\cite{Lovasz-limit} raised a problem to characterize (weakly)norming graphs.

A quick demonstration of the fact that weakly norming hypergraphs are also dominating hypergraphs follows. To prove $\lVert \cdot \rVert_{w(F)}$ is a norm, all the requirements are straightforward from the definition except the triangle inequality. In order to prove the triangle inequality, Hatami~\cite{Hatami,Hatami-thesis} proved that it is equivalent to showing the following Cauchy--Schwartz--Gowers inequality. Let $\chi: E(F) \to [e(F)]$ be an edge coloring, not necessarily proper and let $\calF = (f_1, \dots, f_{e(F)})$ be a family of bounded Lebesgue measurable functions on $[0, 1]^r$. Define
$$
    \langle\calF; \chi\rangle_F \defeq \int \prod_{e = (v_1, \dots, v_r)\in E(F)} f_{\chi(e)}(x_{v_1}, \dots, x_{v_r}) d\mu^{v(F)}.
$$
Then showing the triangle inequality is equivalent to proving
\begin{equation}\label{eq:CSG}
    \langle\calF; \chi\rangle_F \leq \prod_{e\in E(F)} \lVert f_{\chi(e)} \rVert_{w(F)}
\end{equation}
hold for all choices of $\calF$ and $\chi$.\footnote{Hatami~\cite{Hatami-thesis} proved this equivalence only for graph cases, but as he mentioned in the same thesis, its hypergraph generalization is straightforward from his argument.}

From \eqref{eq:CSG}, we can deduce that weakly norming implies dominating. Assume $F$ is a weakly norming $r$-graph and consider an arbitrary sub-hypergraph $F'$ of $F$. Let $H$ be an $r$-graph on $n$ vertices $(v_1, \dots, v_n)$. Consider a function $h: [0, 1]^r \to \mathbf{R}$ as
$$
    h(x_1, \dots, x_r) \defeq 
    \begin{cases}
        1 &\text{if $(v_{\lfloor nx_1 \rfloor}, \dots, v_{\lfloor nx_r \rfloor}) \in E(H)$},\\
        0 &\text{otherwise}.
    \end{cases}
$$
Observe that $t_F(H) = \lVert h \rVert_{w(F)}^{e(F)}$ and $t_{F'}(H) = \lVert h \rVert_{w(F')}^{e(F')}$. Setting $f_{\chi(e)} = h$ for all $e\in E(F')$ and $f_{\chi(e)} = 1$ for all $e\notin E(F')$ in \eqref{eq:CSG}. Then we have the inequality $t_F(H)^{\frac{1}{e(F)}} \geq t_{F'}(H)^{\frac{1}{e(F')}}$, hence $F$ is dominating.

There are several hypergraph classes that are known to be (weakly)norming. For instance, $r$-graph $K^{(r)}_{2, \dots, 2}$ is a norming hypergraph, now it is known as Gowers' octahedral norm, which was used in his work on hypergraph regularity~\cite{Gowers-3graph,Gowers-hypergraph} and is now a widely used tool for measuring quasirandomness in extremal combinatorics. 
A systematic research on (weakly)norming graphs and hypergraphs was initiated by Hatami~\cite{Hatami}. He showed that even cycles and complete $r$-partite $r$-graphs are norming, and $n$-dimensional hypercube graphs are weakly norming. In addition, Lov\'{a}sz~\cite{Lovasz-limit} proved that the complete graph minus a perfect matching is also weakly norming. Generalising these results, Conlon and Lee~\cite{Finite-reflection} found a large class of (weakly)norming hypergraphs and introduced a general method to verify (weakly)norming hypergraphs. Also, Conlon and Lee developed a method in~\cite{Finite-reflection} to find broader classes of dominating graphs~\cite{dominating-graphs}, which are not weakly norming graphs.

We close this section with the following two propositions. These propositions provide a way to construct new weakly norming and dominating hypergraphs recursively. 

\begin{proposition}[Hatami~\cite{Hatami}]\label{prop:weakly-tensor}
    Let $F_1$ and $F_2$ be two weakly norming $r$-graphs. Then their tensor product $F_1 \otimes F_2$ is also a weakly norming $r$-graph.
\end{proposition}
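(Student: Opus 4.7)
\begin{proofsketch}[Proof proposal]
The plan is to verify the Cauchy--Schwarz--Gowers inequality \eqref{eq:CSG} for $F_1 \otimes F_2$; by the equivalence recalled in the text just before the statement, this is exactly what is needed to conclude the weakly norming property. Positive definiteness and absolute homogeneity of $\lVert \cdot \rVert_{w(F_1 \otimes F_2)}$ are immediate, and the triangle inequality reduces to CSG in the standard way by expanding $\prod_{e}(\lvert f(x_e)\rvert + \lvert g(x_e)\rvert)$ and applying \eqref{eq:CSG} to each of the $2^{e(F_1 \otimes F_2)}$ resulting monomials under the two-coloring of edges that it encodes.

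The key conceptual observation is that the proof of CSG for a weakly norming $r$-graph uses nothing about $[0,1]$ beyond its being a probability space, so CSG for $F_i$ extends verbatim to functions $\phi : \Omega^r \to \mathbb{R}$ valued in an arbitrary probability space $(\Omega, \nu)$. I would invoke this extension twice: once with $\Omega = [0,1]^{V(F_2)}$ (the space of ``rows'' $Y_u = (x_{(u,w)})_{w \in V(F_2)}$) and once with $\Omega = [0,1]^{V(F_1)}$ (the space of ``columns'' $Z_w = (x_{(u,w)})_{u \in V(F_1)}$), each endowed with the product Lebesgue measure. Let $\lVert \cdot \rVert_{F_1}^{\mathrm{row}}$ and $\lVert \cdot \rVert_{F_2}^{\mathrm{col}}$ denote the corresponding weakly norming functionals.

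Given $\chi$ and $\mathcal{F} = (f_c)_c$, set $g_{e_1, e_2} \defeq f_{\chi(e_1, e_2)}$ and define the row- and column-level super-functions
\begin{equation*}
    G_{e_1}(Y_1, \dots, Y_r) \defeq \prod_{e_2 = (w_1, \dots, w_r)} g_{e_1, e_2}(Y_1(w_1), \dots, Y_r(w_r)), \quad \Psi_{e_1, e_2}(Z_1, \dots, Z_r) \defeq \prod_{e_1' = (u_1', \dots, u_r')} g_{e_1, e_2}(Z_1(u_1'), \dots, Z_r(u_r')).
\end{equation*}
By Fubini, $\langle \mathcal{F}; \chi \rangle_{F_1 \otimes F_2} = \int \prod_{e_1 = (u_1, \dots, u_r)} G_{e_1}(Y_{u_1}, \dots, Y_{u_r})\, dY$, and the extended CSG for $F_1$ bounds this by $\prod_{e_1} \lVert G_{e_1}\rVert_{F_1}^{\mathrm{row}}$. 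Unfolding $(\lVert G_{e_1}\rVert_{F_1}^{\mathrm{row}})^{e(F_1)}$ re-expresses it as $\int \prod_{e_2} \Psi_{e_1, e_2}(Z_{w_1}, \dots, Z_{w_r})\, dZ$, to which the extended CSG for $F_2$ applies and gives a further bound by $\prod_{e_2} \lVert \Psi_{e_1, e_2}\rVert_{F_2}^{\mathrm{col}}$. Unfolding once more identifies
\begin{equation*}
    (\lVert \Psi_{e_1, e_2}\rVert_{F_2}^{\mathrm{col}})^{e(F_2)} = \lVert g_{e_1, e_2}\rVert_{w(F_1 \otimes F_2)}^{e(F_1) e(F_2)},
\end{equation*}
and chaining these bounds delivers $\langle \mathcal{F}; \chi\rangle_{F_1 \otimes F_2} \leq \prod_{(e_1, e_2)} \lVert g_{e_1, e_2}\rVert_{w(F_1 \otimes F_2)}$, which is the desired CSG for $F_1 \otimes F_2$.

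The hard part is recognizing and justifying the measure-space extension of CSG: without it, naively iterating CSG for $F_1$ and $F_2$ on $[0,1]$-valued variables would yield bounds in $\lVert \cdot \rVert_{w(F_1)}$ and $\lVert \cdot \rVert_{w(F_2)}$ separately rather than the required $\lVert \cdot \rVert_{w(F_1 \otimes F_2)}$. Once this extension is in hand---either by inspecting the standard Hatami-style proof of CSG, which only uses iterated Cauchy--Schwarz on product measures, or by a simple-function approximation---the remaining work is notation-heavy but routine bookkeeping.
\end{proofsketch}
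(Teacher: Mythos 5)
The paper states this proposition as a result cited to Hatami and gives no proof of its own, so there is no in-paper argument to compare against. Judged on its own, your sketch is correct and is essentially Hatami's route: reduce weakly norming to the Cauchy--Schwarz--Gowers inequality \eqref{eq:CSG}, group the variables $x_{(u,w)}$ of $F_1\otimes F_2$ into row variables $Y_u\in\Omega=[0,1]^{V(F_2)}$ indexed by $u\in V(F_1)$, apply CSG for $F_1$ over $\Omega$, then unfold each $\lVert G_{e_1}\rVert_{w(F_1)}^{e(F_1)}$ as an $F_2$-integral over column variables $Z_w\in[0,1]^{V(F_1)}$ and apply CSG for $F_2$; the exponent bookkeeping $\lVert\Psi_{e_1,e_2}\rVert_{w(F_2)}^{e(F_2)}=\lVert g_{e_1,e_2}\rVert_{w(F_1\otimes F_2)}^{e(F_1)e(F_2)}$ then closes the chain exactly as you say. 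Two small points are worth tightening. First, the measure-space extension of CSG, which you rightly identify as the crux, is cleanest by approximation with simple functions subordinate to finite partitions: the atoms of any probability space can be matched measure-for-measure with intervals of $[0,1]$, so the inequality transfers in both directions. Second, absolute values should be carried through the definition of $\Psi_{e_1,e_2}$ because $\lVert\cdot\rVert_{w(F_1)}$ introduces them; this is harmless since one may assume the $f_c$ nonnegative without loss of generality. With these caveats your proof is sound.
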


\begin{proposition}[Conlon--Lee~\cite{dominating-graphs}]\label{prop:dominating-tensor}
    Let $G$ be a dominating graph. Then for any integer $m \geq 1$, the graph $H\otimes K^{(2)}_{m, m}$ is a dominating graph.
\end{proposition}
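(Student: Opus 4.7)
The plan is to verify the domination inequality $t_F(H)^{1/e(F)} \geq t_{F'}(H)^{1/e(F')}$ for every subgraph $F'$ of $F := G \otimes K^{(2)}_{m, m}$ with $e(F') \geq 1$ and every graph $H$. I will chain two intermediate dominations through a ``fiber-completed'' graph $F''$, separating the roles of the two tensor factors. The key inputs are that $K^{(2)}_{m, m}$ is weakly norming (Hatami) and that $G$ is dominating by hypothesis.

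First I encode the fiber pattern of $F'$. For each $uv \in E(G)$, let $B_{uv}$ be the bipartite subgraph of $K^{(2)}_{m, m}$ consisting of those pairs $(i,j)$ with $\{(u,i),(v,j)\} \in E(F')$. Let $G' \subseteq G$ be the subgraph on the edges $uv$ for which $B_{uv}$ is nonempty, and put $F'' := G' \otimes K^{(2)}_{m, m}$, the fiber completion of $F'$ over $G'$, so that $F' \subseteq F'' \subseteq F$. To establish the first link $F' \preceq F''$ I would use that $K^{(2)}_{m, m}$ is weakly norming. Writing $f$ for the graphon of $H$ and $X_u \in [0,1]^{2m}$ for the fiber variables over $u$, and setting $\psi_B(X,Y) := \prod_{(i,j) \in E(B)} f(x^i, y^j)$ for a subgraph $B$ of the fiber, we have
\begin{equation*}
    t_{F'}(H) = \int \prod_{uv \in E(G')} \psi_{B_{uv}}(X_u, X_v) \, d\mu(X).
\end{equation*}
The Cauchy--Schwarz--Gowers inequality \eqref{eq:CSG} for $K^{(2)}_{m, m}$, applied fiberwise to each $B_{uv} \subseteq K^{(2)}_{m, m}$ and coupled across the edges of $G'$ by a H\"older-type argument, then yields the normalized bound $t_{F'}(H)^{1/e(F')} \leq t_{F''}(H)^{1/e(F'')}$.

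For the second link $F'' \preceq F$ I would exploit the identity
\begin{equation*}
    t_{\Gamma \otimes K^{(2)}_{m, m}}(H) = t_\Gamma(\widetilde H),
\end{equation*}
valid for any graph $\Gamma$, where $\widetilde H$ is the graph on $V(H)^{2m}$ (equivalently the graphon on $[0,1]^{2m}$) with kernel $\widetilde f(X, Y) := \prod_{\{i,j\} \in E(K^{(2)}_{m, m})} f(x^i, y^j) \, f(x^j, y^i)$. This identity follows from reorganizing a homomorphism $\Gamma \otimes K^{(2)}_{m, m} \to H$ as a map $V(\Gamma) \to V(H)^{2m}$. Applying the identity with $\Gamma = G'$ and $\Gamma = G$, and using $e(\Gamma \otimes K^{(2)}_{m, m}) = 2m^2 \cdot e(\Gamma)$, the inequality $t_{F''}(H)^{1/e(F'')} \leq t_F(H)^{1/e(F)}$ reduces to $t_{G'}(\widetilde H)^{1/e(G')} \leq t_G(\widetilde H)^{1/e(G)}$, which is exactly the dominating property of $G$ applied to the target graphon $\widetilde H$.

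The main obstacle is the first link. The Cauchy--Schwarz--Gowers inequality gives the desired domination within a single fiber in isolation, but coupling the inequality across all edges of $G'$ simultaneously so that the exponents $1/e(F')$ on the left and $1/e(F'')$ on the right appear with the correct normalization requires careful bookkeeping of H\"older exponents. I expect the cleanest route is an iterated application of \eqref{eq:CSG}, one edge of $G'$ at a time, with the exponents accumulating so that the resulting inequality is tight precisely when every $B_{uv}$ equals the full $K^{(2)}_{m, m}$, which is exactly the scenario corresponding to $F''$.
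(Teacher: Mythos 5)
The paper itself does not prove this proposition: it is quoted from Conlon and Lee's paper on dominating graphs, and the statement appears here only as a black-box tool (also note the typo in the statement---the ``$H$'' should be ``$G$''; you read it correctly). So there is no in-paper proof to compare against, and I can only assess your sketch on its own terms.

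Your high-level plan---chain $F' \preceq F'' \preceq F$ through the fibre completion $F'' = G' \otimes K^{(2)}_{m,m}$---is sensible, and the second link is essentially correct. The identity $t_{\Gamma \otimes K^{(2)}_{m,m}}(H) = t_\Gamma(\widetilde H)$ is a clean regrouping of variables; your kernel $\widetilde f$ is indeed symmetric (because $f$ is), and since $G'$ is a subgraph of $G$ with at least one edge, the dominating property of $G$ applied to the target $\widetilde H$ gives $t_{F''}(H)^{1/e(F'')} \leq t_F(H)^{1/e(F)}$ exactly as you say, using $e(\Gamma \otimes K^{(2)}_{m,m}) = 2m^2 e(\Gamma)$.

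The first link is a genuine gap, and you acknowledge it yourself, but I want to be precise about why the plan as stated is not yet an argument. The Cauchy--Schwarz--Gowers inequality \eqref{eq:CSG} for $K^{(2)}_{m,m}$ is an inequality over the $2m$ variables attached to one copy of $K^{(2)}_{m,m}$. The fibre of $G' \otimes K^{(2)}_{m,m}$ over a single edge $uv \in E(G')$ is $K_2 \otimes K^{(2)}_{m,m}$, which is two disjoint copies of $K^{(2)}_{m,m}$ on the $4m$ coordinates of $(X_u, X_v)$; so $B_{uv}$ is not a subgraph of a single $K^{(2)}_{m,m}$, and ``applying CSG fibrewise'' already needs to say which of the two copies, or both, and how the exponents combine. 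More seriously, when you apply CSG (or a H\"older step) at the fibre over $uv$, the remaining factors $\psi_{B_{u'v'}}$ with $u' \in \{u,v\}$ still depend on $X_u, X_v$; a fibre-local step does not decouple them, and it is not clear how the iteration ``one edge of $G'$ at a time'' closes into the clean exponent $e(F')/e(F'')$. Finally, nothing in your sketch uses anything about $K^{(2)}_{m,m}$ beyond weak norming, which would upgrade the proposition to ``$G \otimes K$ is dominating whenever $G$ is dominating and $K$ is weakly norming''; that is a strictly stronger claim than what is cited, and the specialization to $K^{(2)}_{m,m}$ in Conlon--Lee is a hint that some extra structure (edge-transitivity of $K^{(2)}_{m,m}$, or the reflection-group machinery from their earlier work) is doing real work in the first link. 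Until that step is written out, the proof is incomplete.
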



\section{Proof of main theorems}\label{sec:proof}
In this section, we prove \Cref{thm:main-sidorenko,thm:main-domination,thm:main-extremal}.
The proofs of \Cref{thm:main-sidorenko,thm:main-domination} are appropriate hypergraph generalizations of the proof of \Cref{thm:Conlon-Fox-Sudakov}. In order to prove \cref{thm:main-sidorenko,thm:main-domination}, we need to collect the following simple lemmas.

\begin{lemma}\label{lem:sidorenko-dominating}
    Let $M$ be an $r$-graph, which is a disjoint union of Sidorenko $r$-graphs $S_1, \dots, S_{\ell}$. Let $S'$ be a disjoint union of some members in $\{S_1, \dots, S_t\}$. 
    Then for every $n$-vertex $r$-graph $H$, we have the following inequality.
    \begin{equation*}
        \frac{\hm(M, H)}{\hm(M - V(M'), H)} \geq \left(t_{K_{r}^{(r)}}(H)\right)^{d_M(V(M'))} n^{v(M')}.
    \end{equation*}
\end{lemma}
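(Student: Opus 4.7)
The plan is to exploit the vertex-disjoint union structure of $M$ to reduce the claimed inequality to a product of Sidorenko inequalities, one per component of $M'$. Write $M = S_1 \sqcup \cdots \sqcup S_\ell$ as a vertex-disjoint union; by hypothesis, $M'$ is the disjoint union of some subcollection, say $M' = \bigsqcup_{i \in I} S_i$ for some $I \subseteq [\ell]$. Then $M$ decomposes as the vertex-disjoint union $M = M' \sqcup (M - V(M'))$, where $M - V(M') = \bigsqcup_{i \in [\ell]\setminus I} S_i$.

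The first step is multiplicativity of homomorphism counts. Since $M'$ and $M - V(M')$ share no vertices and no edges of $M$ cross between them, every homomorphism $\phi: M \to H$ is uniquely specified by its restrictions to these two parts, and conversely any pair of restrictions glues to a homomorphism. Hence
\[
    \hm(M, H) \;=\; \hm(M', H) \cdot \hm(M - V(M'), H),
\]
so the ratio on the left-hand side of the desired inequality equals $\hm(M', H)$. The second step is the combinatorial identity $d_M(V(M')) = e(M')$: an edge of $M$ intersecting $V(M')$ must lie entirely inside one of the components $S_i$ with $i \in I$ (because the $S_j$'s are vertex-disjoint in $M$), so such edges are exactly the edges of $M'$. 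The inequality therefore reduces, after dividing by $n^{v(M')}$, to
\[
    t_{M'}(H) \;\geq\; t_{K_r^{(r)}}(H)^{e(M')}.
\]

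The final step applies the Sidorenko property componentwise. Again by the vertex-disjoint decomposition $M' = \bigsqcup_{i \in I} S_i$, homomorphism densities factor as $t_{M'}(H) = \prod_{i \in I} t_{S_i}(H)$. Each $S_i$ is Sidorenko, so $t_{S_i}(H) \geq t_{K_r^{(r)}}(H)^{e(S_i)}$, and taking the product over $i \in I$ gives the desired bound with exponent $\sum_{i \in I} e(S_i) = e(M') = d_M(V(M'))$. Multiplying back through by $\hm(M - V(M'), H) \cdot n^{v(M')}$ yields the lemma.

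I do not foresee any serious obstacle in this argument; the entire content is the disjoint-union multiplicativity together with the hypothesis on each $S_i$. The one point that needs a moment of care is the identification $d_M(V(M')) = e(M')$, which fails for arbitrary sub-hypergraphs and crucially uses that $M'$ consists of entire components of the disjoint union $M$. Tracking the notational bookkeeping between $\hm$ and $t$, and the factor $n^{v(M')}$ arising from $t_{M'}(H) = \hm(M', H)/n^{v(M')}$, is the only other bit of mild care required.
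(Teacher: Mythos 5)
Your proof is correct and follows essentially the same route as the paper's: multiplicativity of $\hm$ over vertex-disjoint components, the identity $d_M(V(M')) = e(M')$ because $M'$ consists of whole components, and the Sidorenko inequality for $M'$. The only cosmetic difference is that you derive the Sidorenko bound for $M'$ by explicitly factoring $t_{M'}(H)$ over the $S_i$, whereas the paper just states that a disjoint union of Sidorenko hypergraphs is Sidorenko.
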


\begin{proof}[Proof of \Cref{lem:sidorenko-dominating}]
    Since $M'$ is a disjoint union of connected components of $M$, we have 
    $$
        \hm(M, H) = \hm(M', H) \cdot \hm(M - V(M'), H),
    $$ so the following holds.
    \begin{equation}\label{eq:sidorenko-dominating}
        \frac{\hm(M, H)}{\hm(M - V(M'), H)} = \hm(M', H) = t_{M'}(H) n^{v(M')}.
    \end{equation} 
    Since $M'$ is a disjoint union of Sidorenko $(r-1)$-graphs, so is Sidorenko and $d_M(M') = e(M')$. Thus, we have $t_{M'}(H) \geq \left(t_{K_{r}^{(r)}}(H)\right)^{d_M(M')}$. Together with \eqref{eq:sidorenko-dominating}, we obtain the desired inequality. This completes the proof. 
\end{proof}

We also have a similar lemma for dominating hypergraphs.

\begin{lemma}\label{lem:dominating-dominating}
    Let $M$ be a dominating $r$-graph $M'$ be a sub-hypergraph of $M$.
    Then for every $n$-vertex $r$-graph $H$, we have the following inequality.
    \begin{equation*}
        \frac{\hm(M, H)}{\hm(M - V(M'), H)} \geq \left(t_{K_{r}^{(r)}}(H)\right)^{d_M(V(M'))} n^{v(M')}.
    \end{equation*}
\end{lemma}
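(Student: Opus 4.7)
The plan is to convert the claimed bound into a density inequality. Since $\hm(F, H) = t_F(H)\, n^{v(F)}$ and $v(M) - v(M - V(M')) = v(M')$, the statement is equivalent to
\[
t_M(H) \;\geq\; t_{M - V(M')}(H)\cdot t_{K_r^{(r)}}(H)^{d_M(V(M'))}.
\]
The combinatorial identity driving the proof is
\[
e(M) \;=\; e(M - V(M')) + d_M(V(M')),
\]
which holds because the edges of $M$ destroyed upon deleting $V(M')$ are exactly those meeting $V(M')$.

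Both $M - V(M')$ and a single edge $K_r^{(r)}$ are sub-hypergraphs of $M$ (assuming $e(M) \geq 1$; otherwise the claim is vacuous). Applying the domination property of $M$ to each yields
\[
t_M(H)^{1/e(M)} \;\geq\; t_{K_r^{(r)}}(H) \qquad \text{and} \qquad t_M(H)^{1/e(M)} \;\geq\; t_{M - V(M')}(H)^{1/e(M - V(M'))},
\]
where the second inequality is interpreted as $1 \geq 1$ in the degenerate case $e(M - V(M')) = 0$. Raising the first to the $d_M(V(M'))$-th power, the second to the $e(M - V(M'))$-th power, and multiplying them, the combined exponent of $t_M(H)$ on the left becomes $\bigl(e(M - V(M')) + d_M(V(M'))\bigr)/e(M) = 1$ by the identity above, producing the desired density bound at once.

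The only subtlety is the corner case when $V(M')$ covers every edge of $M$, i.e.\ $e(M - V(M')) = 0$; here $t_{M - V(M')}(H) = 1$ and the target inequality collapses to $t_M(H) \geq t_{K_r^{(r)}}(H)^{e(M)}$, which is the Sidorenko property of $M$ and is itself produced by the first domination inequality alone. There is no deeper obstacle: the whole argument is a short, two-fold application of \Cref{def:dominating hypergraph} to well-chosen sub-hypergraphs, with the exponent bookkeeping dictated by the edge-count identity.
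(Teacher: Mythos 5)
Your proof is correct and follows essentially the same route as the paper's: both arguments rest on the edge-count identity $e(M) = e(M - V(M')) + d_M(V(M'))$, apply the domination inequality with $F' = M - V(M')$, and use the Sidorenko property of $M$ (which, as you note, is itself the domination inequality applied to $F' = K_r^{(r)}$). The paper phrases the second step as "$M$ dominating implies $M$ Sidorenko" rather than as a second explicit domination application, but the exponent bookkeeping is identical; your version is slightly more uniform in presentation and more careful about the degenerate case $e(M - V(M')) = 0$, which the paper glosses over.
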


\begin{proof}[Proof of \Cref{lem:dominating-dominating}]
    Let $L = M - V(M')$.
    Observe that $e(L) = e(M) - d_M(V(M'))$. By the definition of the dominating hypergraph, we have
    $$
        t_M(H)^{\frac{1}{e(M)}} \geq t_{L}(H)^{\frac{1}{e(M) - d_M(V(M'))}},
    $$ so the following inequality holds.
    \begin{equation}\label{eq:dominating-dominating}
         t_M(H)^{\frac{e(M) - d_M(V(M'))}{e(M)}} \geq t_{L}(H).
    \end{equation}
    From \eqref{eq:dominating-dominating}, we deduce the following.
    \begin{equation}\label{eq:dominating2}
        \frac{\hm(M, H)}{\hm(L, H)} = \frac{t_M(H)}{t_{L}(H)} n^{v(M) - v(L)} \geq t_M(H)^{\frac{d_M(V(M'))}{e(M)}} n^{v(M')}.
    \end{equation}
    Since $M$ is dominating, so is Sidorenko, hence we have
    \begin{equation}\label{eq:dominating3}
        t_M(H) \geq t_{K_r^{(r)}}(H)^{e(M)}.
    \end{equation}
    By combining \eqref{eq:dominating2} and \eqref{eq:dominating3}, we obtain our desired inequality as follows.
    \begin{align*}
        \frac{\hm(M, H)}{\hm(M - V(M'), H)} &= \frac{\hm(M, H)}{\hm(L, H)}\\ &\geq t_M(H)^{\frac{d_M(V(M'))}{e(M)}} n^{v(M')}\\ &\geq \left(t_{K_{r}^{(r)}}(H)\right)^{d_M(V(M'))} n^{v(M')}. 
    \end{align*}
    This completes the proof.
\end{proof}

We are now ready to prove \Cref{thm:main-sidorenko,thm:main-domination}.


\subsection{Proof of \Cref{thm:main-sidorenko,thm:main-domination}}\label{subsec:proof-main12}

In this section, we prove \Cref{thm:main-sidorenko,thm:main-domination} simultaneously. To do this, observe the following.

\begin{observation}\label{obs:disjoint}
    Let $F$ be an $r$-partite $r$-graph and $M$ be an $(r-1)$-graph. Assume that the link profile $(\calL_F(v_1), \dots, \calL_F(v_t))$ of $F$ has the following property:
    \begin{enumerate}
        \item[$(1)$] $\calL_F(v_1) = M$.
        \item[$(2)$] For all $i\in [t]$, the link hypergraph $\calL_F(v_i)$ is a disjoint unions of some connected components of $M$.
    \end{enumerate}
    Then we have
    $$
        e(F) = \sum_{i\in [t]} d_F(V(\calL_F(v_i))). 
    $$
\end{observation}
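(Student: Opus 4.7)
The plan is to decompose $e(F)$ using the $r$-partite structure of $F$ and then rewrite each summand via the link and hypothesis (2). Since $\{v_1,\dots,v_t\}$ is the $r$-th part of $F$, every edge of $F$ contains exactly one of $v_1,\dots,v_t$. Grouping $E(F)$ according to which $v_i$ is contained immediately gives
\[
    e(F) \;=\; \sum_{i=1}^{t} d_F(v_i),
\]
where $d_F(v_i)$ is the ordinary degree of $v_i$ in $F$. Unwinding the definition of the link hypergraph, the map $f\mapsto f\cup\{v_i\}$ is a bijection between $E(\calL_F(v_i))$ and the edges of $F$ containing $v_i$, so $d_F(v_i)=e(\calL_F(v_i))$.

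It therefore remains to show $e(\calL_F(v_i))=d_M(V(\calL_F(v_i)))$, which is the one place where hypothesis (2) does real work. By assumption, $\calL_F(v_i)$ is a disjoint union of some connected components $C_1,\dots,C_k$ of $M$, and so $V(\calL_F(v_i))=V(C_1)\cup\cdots\cup V(C_k)$. Since distinct connected components of $M$ are vertex-disjoint, any edge of $M$ that meets $V(\calL_F(v_i))$ must lie entirely inside some $C_j$, and is therefore already an edge of $\calL_F(v_i)$; the converse inclusion is trivial. Hence $d_M(V(\calL_F(v_i)))=e(\calL_F(v_i))$, and chaining the three identities yields the claimed formula.

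There is no real obstacle here, as every step is just unwinding a definition. The only conceptual point worth flagging is that hypothesis (2), rather than the weaker assumption $\calL_F(v_i)\subseteq M$ used in \Cref{thm:main-domination}, is genuinely needed for the last identity: without it, an edge of $M$ could have one endpoint in $V(\calL_F(v_i))$ and another outside the chosen components, so that $d_M(V(\calL_F(v_i)))$ strictly exceeds $e(\calL_F(v_i))$ and the identity degrades to an inequality. This matches the structure of \Cref{thm:main-domination}, where the exact equality above becomes the upper bound $s(F)\le\sum_i d_M(V(\calL_F(v_i)))$ in the weaker setting.
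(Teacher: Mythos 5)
Your proof is correct, and since the paper itself declares this observation ``almost trivial'' and omits the proof, there is no written argument to compare against; yours is exactly the intended chain of unwinding definitions: $e(F)=\sum_i d_F(v_i)=\sum_i e(\calL_F(v_i))=\sum_i d_M(V(\calL_F(v_i)))$, with hypothesis (2) doing its work only at the last step (an edge of $M$ meeting $V(\calL_F(v_i))$ lies entirely inside one of the chosen components and is hence an edge of $\calL_F(v_i)$).

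One point you corrected silently and should instead flag: the statement as printed reads $d_F(V(\calL_F(v_i)))$, but the identity you prove, and the one that is actually used to convert the bound $s(F)\le\sum_i d_M(V(\calL_F(v_i)))$ of \Cref{thm:unified} into $s(F)\le e(F)$, uses $d_M$. With $d_F$ the right-hand side overcounts badly: already for $r=3$ with link profile $(M,M)$ and $M$ a single edge $\{a,b\}$ one has $\sum_i d_F(V(\calL_F(v_i)))=4$ while $e(F)=2$. So $d_F$ in the printed statement is a typo for $d_M$, and your argument proves the corrected (and intended) formula.
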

The proof of \Cref{obs:disjoint} is almost trivial, so we omit it. Due to \Cref{obs:disjoint}, the following theorem implies both \Cref{thm:main-sidorenko,thm:main-domination}.

\begin{theorem}\label{thm:unified}
    Let $r \geq 2$ be an integer and $M$ be an $(r-1)$-graph.
    Let $F$ be an $r$-graph with link profile $(\calL_F(v_1), \dots, \calL_F(v_t))$ such that $\calL_F(v_1) = M$ and for all $i \in [t]$, the link hypergraph $\calL_F(v_i)$ is a sub-hypergraph of $M$. Assume that $F$ and $M$ satisfy at least one of the following:
    \begin{enumerate}
        \item[$(1)$] $M$ is a disjoint unions of Sidorenko $(r-1)$-graphs, say $S_1, \dots, S_{\ell}$, and for each $i\in [t]$, the link hypergraph $\calL_F(v_i)$ is a disjoint unions of some members in $\{S_1,\dots, S_{\ell}\}$. 
        \item[$(2)$] $M$ is a dominating $(r-1)$-graph. 
    \end{enumerate}
    Then we have the following inequality.
    \begin{equation*}
        s(F) \leq \sum_{i\in [t]} d_M(V(\calL_F(v_i))).
    \end{equation*}
\end{theorem}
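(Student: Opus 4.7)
The plan is to generalise the Conlon--Fox--Sudakov proof of \Cref{thm:Conlon-Fox-Sudakov} to the hypergraph setting: the vertex $v_1$, whose link is all of $M$, plays the role of the ``vertex complete to one side'' of a bipartite graph, while the Sidorenko/dominating property of $M$ replaces the Sidorenko-ness of complete bipartite graphs in the original argument. By \Cref{prop:tensor}, it suffices to produce a constant $c = c(F,r) > 0$ with
\[
t_F(H) \;\geq\; c\cdot t_{K_r^{(r)}}(H)^{s}, \qquad s := \sum_{i=1}^{t} \alpha_i,\ \ \alpha_i := d_M(V(\calL_F(v_i))),
\]
for every $r$-graph $H$. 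Expanding and using $\calL_F(v_1) = M$, I would reorganise by the image $w_1 := \phi(v_1)$ to obtain
\[
\hm(F, H) \;=\; \sum_{w_1\in V(H)} \sum_{\phi\in\Hm(M,\calL_H(w_1))} \prod_{i=2}^{t} d_i(\phi),\qquad d_i(\phi) := \bigl|N_H(\phi(\calL_F(v_i)))\bigr|,
\]
so that the remaining task is to lower bound the inner sum for each fixed $w_1$.

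The core step is to handle the inner product by processing $i = 2, \ldots, t$ sequentially. Since each $\calL_F(v_i) \subseteq M$, I invoke \Cref{lem:sidorenko-dominating} in case $(1)$, or \Cref{lem:dominating-dominating} in case $(2)$, with $M' = \calL_F(v_i)$ inside the ambient $(r-1)$-graph $\calL_H(w_1)$; after a Jensen-type averaging step, each index $i$ contributes a multiplicative factor of $t_{K_{r-1}^{(r-1)}}(\calL_H(w_1))^{\alpha_i} \cdot v(\calL_H(w_1))^{v(\calL_F(v_i))}$. Combining these with the $i=1$ contribution $\hm(M,\calL_H(w_1))$, summing over $w_1$, and applying Jensen together with the identity $\sum_{w_1} e(\calL_H(w_1)) = r\cdot e(H)$ converts the link-level densities $t_{K_{r-1}^{(r-1)}}(\calL_H(w_1))$ into the global density $t_{K_r^{(r)}}(H)$, yielding $t_F(H)\geq c\cdot t_{K_r^{(r)}}(H)^{s}$. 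Applying \Cref{prop:tensor} then removes the constant and finishes the proof.

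The main obstacle is the sequential Jensen step: because the factors $d_i(\phi)$ share the variable $\phi|_{V(M)}$, they cannot be separated by a single application of a standard inequality. The cleanest resolution is to proceed one $i$ at a time, at each stage conditioning on the portion of $\phi$ already used and applying the appropriate lemma to the residual sub-hypergraph of $M$, as captured by the ratio $\hm(M, \cdot) / \hm(M - V(\calL_F(v_i)), \cdot)$ in the statements of \Cref{lem:sidorenko-dominating,lem:dominating-dominating}. The hypothesis $\calL_F(v_i) \subseteq M$ is essential throughout, since it ensures that $M - V(\calL_F(v_i))$ is a genuine sub-hypergraph of $M$ to which the dominating inequality (or, in case $(1)$, the disjoint-components decomposition) applies.
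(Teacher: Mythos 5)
You have the right setup and the right lemmas, but the crucial step --- lower-bounding the inner sum $\sum_{\phi \in \Hm(M, \calD_H(w_1))} \prod_{i=2}^t d_i(\phi)$ --- is exactly where the argument genuinely breaks, and the fix you sketch does not close it. The difficulty is that the factors $d_i(\phi) = |N_H(\phi(\calL_F(v_i)))|$ are \emph{not} positively correlated over $\phi$: when the link hypergraphs overlap, the homomorphisms for which $d_2$ is large may be precisely those for which $d_3$ is small, and a ``stage-by-stage conditioning'' pass cannot rule this out. Concretely, the bound one would hope for from a sequential Jensen step, something of the shape $\sum_\phi \prod_i d_i(\phi) \geq N^{-(t-2)} \prod_i \bigl(\sum_\phi d_i(\phi)\bigr)$ with $N$ the number of $\phi$, is simply false in general (take $t=3$, $N=2$, $d_2 = (1,0)$, $d_3 = (0,1)$). \Cref{lem:sidorenko-dominating,lem:dominating-dominating} are the correct primitives --- they control the ratio $\hm(M,\cdot)/\hm(M - V(\calL_F(v_i)),\cdot)$ --- but by themselves they do not decouple the product over $i$.

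What actually makes the Conlon--Fox--Sudakov scheme go through, and what your proposal omits, is a thresholding dichotomy that turns the multiplicative correlation problem into a union bound. For each $i\geq 2$, call a homomorphism $\psi$ of $\calL_F(v_i)$ \emph{rare} if $|N_H(\psi(\calL_F(v_i)))|$ falls below the target threshold $(2t)^{-e(M)} t_{K_r^{(r)}}(H)^{d_M(V(\calL_F(v_i)))} n$, and call a vertex $v\in V(H)$ \emph{bad} with respect to $i$ if a $\geq 1/(2t)$ fraction of $\phi\in\Hm(M,\calD_H(v))$ restrict to a rare $\psi$. The lemmas are then invoked (via double counting) to show that bad vertices carry only an $O(1/t)$ fraction of the degree mass $\sum_v t_{K_{r-1}^{(r-1)}}(\calD_H(v))$, so \emph{good} vertices carry at least half of it, and at each good vertex at least half of the maps $\phi\in\Hm(M,\calD_H(v))$ are simultaneously non-rare for \emph{every} $i\geq 2$. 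For those $\phi$, each factor $d_i(\phi)$ is bounded below by its threshold, so the product separates trivially; this replaces your Jensen step. Only after the dichotomy does one apply convexity (in the vertex $v$, not in $\phi$) and the Sidorenko/dominating property of $M$ to convert $\hm(M,\calD_H(v))$ and the good-vertex mass into the required power of $t_{K_r^{(r)}}(H)$, then finish with \Cref{prop:tensor}. Without the rare/rich and good/bad reduction, the inner sum over $\phi$ does not factorize, and the argument as you describe it does not go through.
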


\begin{proof}[Proof of \Cref{thm:unified}]
    Let $r$, $M$, and $F$ be the integer and hypergraphs that are stated in \Cref{thm:unified}. Let $H$ be an $n$-vertex $r$-graph. Our goal is to show that there is a positive real number $c > 0$ that depends only on $F$ such that the inequality
    \begin{equation}\label{eq:weak}
        \hm(F, H) = t_F(H)n^{v(F)} \geq c \left(t_{K_r^{(r)}}(H)\right)^{\sum_{i\in [t]} d_M(V(\calL_F(v_i)))} n^{v(F)}
    \end{equation} holds for all choices of $H$. Indeed, by \Cref{prop:tensor}, to finish the proof, it suffices to prove the inequality \eqref{eq:weak} holds. 

    Let $(\calL_F(v_1), \dots, \calL_F(v_t)) = (M, M_2, \dots, M_t)$ be the link profile of $F$, where $M_1 = M$.    
    For each $2 \leq i \leq t$, we say a homomorphism $\psi\in \Hm(M_i, K_n^{(r-1)})$ is \emph{rare} with respect to $i$ if 
    $$
        |N_H(\psi(M_i))| \leq (2t)^{-e(M)}\cdot \left(t_{K_r^{(r)}}(H)\right)^{d_M(V(M_i))} n.
    $$ 
    For a vertex $v\in V(H)$, we say $v$ is \emph{bad} with respect to $i$ if
    $$
        \lvert \{\phi\in \Hm(M, \calD_{H}(v)): \phi\vert_{M_i} \text{ is rare} \} \rvert \geq \frac{1}{2t} \cdot \hm(M, \calD_H(v)).
    $$
     Lastly, a vertex $v\in V(H)$ is \emph{good} if for all $2 \leq i \leq t$, $v$ is not a bad vertex with respect to $i$.

    \begin{claim}\label{clm:good-vertices}
        $\sum_{(v: \text{good})} t_{K_{r-1}^{(r-1)}}(\calD_H(v)) \geq \frac{1}{2}\cdot t_{K_r^{(r)}}(H) n.$
    \end{claim}

    \begin{claimproof}[Proof of \Cref{clm:good-vertices}]
        For each $u\in V(H)$ and $2 \leq i \leq t$, let $Z_u^{(i)}$ be the number of $\psi\in \Hm(M_i, K_n^{(r-1)})$ such that $\psi$ is rare with respect to $i$ and $u \in N_H(\psi(M_i))$. Denote $Z^{(i)}$ as a number $\sum_{u\in V(H)} Z_u^{(i)}$
        Then we have 
        \begin{equation}\label{eq:goodvertices1}
            Z^{(i)} = \sum_{(\psi: \text{rare w.r.t. $i$})} N_H(\psi(M_i)) \leq (2t)^{-e(M)}\cdot \left(t_{K_r^{(r)}}(H)\right)^{d_M(V(M_i))} n^{v(M_i) + 1}.
        \end{equation}
        On the other hand, we can lower bound $Z^{(i)}$ by restricting the vertex $u$ to be bad with respect to $i$.
        Assume $u$ is a bad vertex with respect to $i$.  Then, by the definition of bad, the number of homomorphisms $\phi\in \Hm(M, \calD_H(u))$ such that $\phi\vert_{M_i}$ is rare with respect to $i$ is at least $\frac{1}{2t}\cdot \hm(M, \calD_H(u))$. Observe that for every fixed $\psi\in \Hm(M_i, \calD_H(u))$, the number of choices of $\phi \in \Hm(M, \calD_H(u))$ such that $\phi\vert_{M_i} = \psi$ is at most $\hm(M - M_i, \calD_H(u))$. Hence, we have the following.

        \allowdisplaybreaks
        \begin{align*}
            \hm(M - M_i, \calD_H(u)) \cdot Z_u^{(i)} &\geq
            |\{\phi \in \Hm(M, \calD_H(u)) : \phi\vert_{M_i} \text{ is rare w.r.t. $i$} \}|\\ &\geq \frac{1}{2t}\cdot \hm(M, \calD_H(u)).
        \end{align*}
        Thus, we have the following inequality.
        \begin{equation}\label{eq:goodvertices2}
            Z_u^{(i)} \geq \frac{1}{2t}\cdot \frac{\hm(M, \calD_H(u))}{\hm(M - M_i, \calD_H(u))}
        \end{equation}
        By applying \Cref{lem:sidorenko-dominating,lem:dominating-dominating} to \eqref{eq:goodvertices2}, we have
        \allowdisplaybreaks
        \begin{align}
            Z^{(i)} &\geq \sum_{(u: \text{bad w.r.t. $i$})} Z_u^{(i)}\nonumber\\
            &\geq \sum_{(u: \text{bad w.r.t. $i$})} \frac{1}{2t}\cdot \frac{\hm(M, \calD_H(u))}{\hm(M - M_i, \calD_H(u))}\nonumber\\
            &\geq \frac{1}{2t} \cdot \sum_{(u: \text{bad w.r.t. $i$)}} \left( t_{K_{r-1}^{(r-1)}}(\calD_H(u)) \right)^{d_M(V(M_i))} n^{v(M_i)}\nonumber\\
            &\geq \frac{1}{2t} \cdot \left(\frac{\sum_{(u: \text{bad w.r.t. $i$})} t_{K_{r-1}^{(r-1)}}(\calD_H(u))}{n}\right)^{d_M(V(M_i))} n^{v(M_i) + 1}.\label{eq:goodvertices3}
        \end{align}
        The last inequality \eqref{eq:goodvertices3} follows from the fact that the number of bad vertices with respect to $i$ is at most $n$ and the convexity of the function $f(x) = x^{d_M(V(M_i))}$.
        From \eqref{eq:goodvertices1} and \eqref{eq:goodvertices3}, we obtain the following inequality.

        \begin{equation}\label{eq:goodvertices4}
            \sum_{(u: \text{bad w.r.t. $i$})} t_{K_{r-1}^{(r-1)}}(\calD_H(u)) \leq \frac{1}{2t} \cdot t_{K_r^{(r-1)}}(H) n.
        \end{equation}
        A simple double counting yields that
        $$
            \sum_{v\in V(H)} t_{K_{r-1}^{(r-1)}}(d_H(v))n^{r-1} = t_{K_r^{(r)}}(H)n^r.
        $$
        Hence, by \eqref{eq:goodvertices4}, we have the following inequalities.

        \allowdisplaybreaks
        \begin{align*}
            \sum_{(v: \text{good})} t_{K_{r-1}^{(r-1)}}(\calD_H(v)) n^{r-1} &= \sum_{v} t_{K_{r-1}^{(r-1)}}(\calD_H(v)) n^{r-1} - \sum_{2 \leq i \leq t} \sum_{(u: \text{bad w.r.t. $i$})} t_{K_{r-1}^{(r-1)}}(\calD_H(u)) n^{r-1}\\
            &= t_{K_r^{(r)}}(H)n^r - \sum_{2 \leq i \leq t} \sum_{(u: \text{bad w.r.t. $i$})} t_{K_{r-1}^{(r-1)}}(\calD_H(u)) n^{r-1}\\
            &\geq t_{K_r^{(r)}}(H)n^r - \sum_{2 \leq i \leq t} \frac{1}{2t}\cdot t_{K_r^{(r)}}(H) n^r\\
            &\geq \frac{1}{2} \cdot t_{K_r^{(r)}}(H)n^r.
        \end{align*}
        This completes the proof.
    \end{claimproof}

    Let $v \in V(H)$ be a vertex and $\phi \in \Hm(M, \calD_H(v))$. We say that $\phi$ is \emph{rich} with respect to $v$ if for all $2 \leq i \leq t$, the restriction $\phi\vert_{M_i}$ is not rare with respect to $i$.
    The next claim says that one can find many rich homomorphisms of $M$ in downward hypergraphs of a good vertex.

    \begin{claim}\label{clm:rich-homomorphisms}
        Let $u\in V(H)$ be a good vertex. Then there are at least $\frac{1}{2}\cdot \hm(M, \calD_H(u))$ rich homomorphisms with respect to $u$.
    \end{claim}

    \begin{claimproof}[Proof of \Cref{clm:rich-homomorphisms}]
        By the definition of a good vertex, for each $2 \leq i \leq t$, the number of homomorphisms $\phi\in \Hm(M, \calD_H(u))$ such that $\phi\vert_{M_i}$ is rare with respect to $i$ is at most $\frac{1}{2t} \cdot \hm(M, \calD_H(u))$. Thus, the number of rich homomorphisms is at least 
        $$
            \hm(M, \calD_H(u)) - \sum_{2 \leq i \leq t} \frac{1}{2t} \cdot \hm(M, \calD_H(u)) \geq \frac{1}{2} \cdot \hm(M, \calD_H(u)).
        $$
        This completes the proof.
    \end{claimproof}

    Now we show that \eqref{eq:weak} holds for some $c > 0$ that only depends on $F$. Our strategy is to pick a good vertex $v$ first and then find many rich homomorphisms in the downward hypergraph of $v$, which will play the role of $v_1$ in the $r$-th part of $V(F)$ such that $\calL_F(v_1) = M$. Then, by the definition of rich and rare homomorphisms, one can show that there are sufficiently many homomorphic copies of $F$ in $H$ by using \Cref{clm:good-vertices,clm:rich-homomorphisms}. From the definitions of good, rich, and rare, the following inequalities hold.

    \allowdisplaybreaks
    \begin{align}
        \hom(F, H) &\geq \sum_{(v: \text{good})} \sum_{(\phi\in \Hm(M, \calD_H(v)))} \prod_{2 \leq i \leq t} N_H(\phi\vert_{M_i})\nonumber\\
        &\geq \sum_{(v: \text{good})} \sum_{\substack{[\phi\in \Hm(M, \calD_H(v))\\: \text{rich w.r.t. $v$]}}} \prod_{2 \leq i \leq t} N_H(\phi\vert_{M_i})\nonumber\\
        &\geq \sum_{(v: \text{good})} \sum_{\substack{[\phi\in \Hm(M, \calD_H(v))\\: \text{rich w.r.t. $v$]}}} \prod_{2\leq i \leq t}  \bigg[(2t)^{-e(M)} \cdot \left(t_{K_r^{(r)}} (H) \right)^{d_M(V(M_i))} n\bigg]\label{eq:use-rich}\\
        &= (2t)^{-t e(M)} \sum_{(v: \text{good})} \sum_{\substack{[\phi\in \Hm(M, \calD_H(v))\\: \text{rich w.r.t. $v$]}}} \left(t_{K_r^{(r)}} (H) \right)^{\sum_{2\leq i \leq t} d_M(V(M_i))} n^{t-1}\nonumber\\
        &\geq (2t)^{-t e(M)} n^{t-1} \sum_{(v: \text{good})} \frac{1}{2} \cdot \hom(M, \calD_H(v)) \cdot \left(t_{K_r^{(r)}} (H) \right)^{\sum_{2\leq i \leq t} d_M(V(M_i))}\label{eq:use-many-rich}\\
        &= \frac{1}{2} \cdot (2t)^{-t e(M)}  \left(t_{K_r^{(r)}} (H) \right)^{\sum_{2\leq i \leq t} d_M(V(M_i))} n^{t-1} \sum_{(v: \text{good})} \hom(M, \calD_H(v))\nonumber\\
        &\geq \frac{1}{2} \cdot (2t)^{-t e(M)}  \left(t_{K_r^{(r)}} (H) \right)^{\sum_{2\leq i \leq t} d_M(V(M_i))} n^{v(M) + t - 1} \sum_{(v: \text{good})} \left(t_{K_{r-1}^{(r-1)}}(\calD_H(v)) \right)^{e(M)}\label{eq:use-sidorenko}\\
        &\geq \frac{1}{2} \cdot (2t)^{-t e(M)}  \left(t_{K_r^{(r)}} (H) \right)^{\sum_{2\leq i \leq t} d_M(V(M_i))} n^{v(M) + t}  \left(\frac{\sum_{(v: \text{good})} t_{K_{r-1}^{(r-1)}}(\calD_H(v))}{n}\right)^{e(M)}\label{eq:use-convexity}\\
        &\geq \frac{1}{4} \cdot (2t)^{-t e(M)}  \left(t_{K_r^{(r)}} (H) \right)^{\sum_{2\leq i \leq t} d_M(V(M_i))} n^{v(F)} \left(t_{K_r^{(r)}}(H)\right)^{e(M)}\label{eq:use-first-clm}\\
        &= \frac{1}{4} \cdot (2t)^{-t e(M)}  \left(t_{K_r^{(r)}} (H) \right)^{\sum_{i\in [t]} d_M(V(M_i))} n^{v(F)}.\nonumber
    \end{align}
    The inequality \eqref{eq:use-rich} holds by using the definition of rich homomorphisms, and \eqref{eq:use-many-rich} holds by \Cref{clm:rich-homomorphisms}. To deduce \eqref{eq:use-sidorenko}, we use the fact that $M$ is a Sidorenko $(r-1)$-graph. By using the convexity of a function $f(x) = x^{e(M)}$ and the fact that there are at most $n$ good vertices, we obtain the inequality \eqref{eq:use-convexity}. Lastly, from \Cref{clm:good-vertices}, the last inequality \eqref{eq:use-first-clm} holds.
     
    Hence, by taking $c = \frac{1}{4}\cdot (2t)^{-te(M)}$, the inequality \eqref{eq:weak} holds. This completes the proof.
\end{proof}


\subsection{Proof of \Cref{thm:main-extremal}}\label{subsec:proof-main3}
In this section, we prove our last main theorem, \Cref{thm:main-extremal}. Before we start, we define several notations. For two $r$-graphs $F$ and $H$, we say a homomorphism $\phi \in \Hm(F, H)$ is \emph{degenerated}, if $F$ and $\phi(F)$ is not isomorphic to each other. We call a homomorphism $\phi \in \Hm(F, H)$ \emph{proper} if it is not degenerated.

The first step to prove \Cref{thm:main-extremal} is showing that for a given $(r-1)$-graph $F$ and an integer $t$, the Sidorenko exponent of $F(t)$ is bounded above by $t$ times the Sidorenko exponent of $F$.

\begin{lemma}\label{lem:Sidorenko-lifting}
    Let $r \geq 2$ be an integer and $F$ be an $(r-1)$-graph. Then for all positive integers $t$, the following inequality holds.
    \begin{equation*}
        s(F(t)) \leq t \cdot s(F).
    \end{equation*}
\end{lemma}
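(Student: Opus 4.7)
The plan is to lower-bound $t_{F(t)}(H)$ directly in terms of $t_{K_r^{(r)}}(H)$ and then invoke \Cref{prop:tensor}. The crucial observation is that a homomorphism $\phi: F(t) \to H$ is determined by a choice of images $u_1, \dots, u_t \in V(H)$ for the $t$ vertices of the $r$-th part of $F(t)$, together with a single map $\psi: V(F) \to V(H)$ that must simultaneously be a homomorphism from $F$ into the \emph{common link} $(r-1)$-graph $H_{\mathbf{u}}$, defined by $V(H_{\mathbf{u}}) = V(H)$ and $E(H_{\mathbf{u}}) = \{e \in V(H)^{r-1} : e \cup \{u_i\} \in E(H) \text{ for all } i \in [t]\}$. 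This yields the identity
\[
    \hm(F(t), H) = \sum_{u_1, \dots, u_t \in V(H)} \hm(F, H_{\mathbf{u}}).
\]

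Next, I would apply the defining Sidorenko inequality for $F$: directly from \Cref{def:sidorenko exponent} one has $\hm(F, G) \geq v(G)^{v(F)} \cdot t_{K_{r-1}^{(r-1)}}(G)^{s(F)}$ for every $(r-1)$-graph $G$, because any $G$ violating this would witness an exponent exceeding $s(F)$, contradicting the supremum definition. Writing $n = v(H)$ and $p = t_{K_r^{(r)}}(H)$ and applying this term by term reduces the task to proving
\[
    \sum_{\mathbf{u}} t_{K_{r-1}^{(r-1)}}(H_{\mathbf{u}})^{s(F)} \geq n^{t} \cdot p^{t \cdot s(F)}.
\]

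I would establish this via two applications of Jensen's inequality, using the convexity of $x^{s(F)}$ (valid since $s(F) \geq e(F) \geq 1$) and of $x^{t}$ (valid since $t \geq 1$). First, averaging over the $n^t$ choices of $\mathbf{u}$ bounds the left-hand side below by $n^{t(1-s(F))}$ times a power of $\sum_{\mathbf{u}} t_{K_{r-1}^{(r-1)}}(H_{\mathbf{u}})$. A swap of summations then gives the key identity $\sum_{\mathbf{u}} |E(H_{\mathbf{u}})| = \sum_{e \in V(H)^{r-1}} |N_H(e)|^t$, to which a second Jensen estimate applies; combined with $\sum_e |N_H(e)| = n^{r} p$, this delivers the required lower bound once the powers of $n$ are tracked.

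Combining all of the above, together with $v(F(t)) = v(F) + t$, produces the inequality $t_{F(t)}(H) \geq t_{K_r^{(r)}}(H)^{t \cdot s(F)}$, from which \Cref{prop:tensor} immediately gives the conclusion $s(F(t)) \leq t \cdot s(F)$. The main subtlety lies in bookkeeping the exponents of $n$ across the Sidorenko step and the two Jensen applications, which I would handle by working with the densities $t_{K^{(r)}_r}(\cdot)$ and $t_{K^{(r-1)}_{r-1}}(\cdot)$ throughout rather than raw homomorphism counts. The edge case $e(F) = 0$ is trivial, as $F(t)$ is then also edgeless and $s(F(t)) = 0$ matches the desired bound.
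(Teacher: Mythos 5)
Your proof is correct, and it uses the same ingredients as the paper — the defining inequality $t_F(G) \geq t_{K_{r-1}^{(r-1)}}(G)^{s(F)}$, two Jensen/convexity steps, one double-counting identity, and Proposition~\ref{prop:tensor} — but arranged via the \emph{dual} decomposition. The paper fixes a homomorphism $\phi \in \Hm(F, K_n^{(r-1)})$ first and writes $\hm(F(t), H) = \sum_{\phi} |N_H(\phi(F))|^t$, then applies Jensen for $x^t$, swaps the sum to $\sum_{v\in V(H)} \hm(F, \calD_H(v))$, applies the Sidorenko bound to each downward link $\calD_H(v)$, and finishes with Jensen for $x^{s(F)}$. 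You instead fix the apex images $\mathbf{u}\in V(H)^t$ first and write $\hm(F(t), H) = \sum_{\mathbf{u}} \hm(F, H_{\mathbf{u}})$, apply the Sidorenko bound to the \emph{common} link $H_{\mathbf{u}} = \bigcap_i \calD_H(u_i)$, apply Jensen for $x^{s(F)}$, swap to $\sum_e |N_H(e)|^t$, and finish with Jensen for $x^t$. The two proofs are mirror images of each other and yield the same bound, so neither buys anything the other does not; if anything the paper's order is marginally cleaner in that the Sidorenko step is applied to single links $\calD_H(v)$ rather than to $t$-fold intersections, but this is cosmetic. Your justification that $t_F(G) \geq t_{K_{r-1}^{(r-1)}}(G)^{s(F)}$ follows directly from the supremum in Definition~\ref{def:sidorenko exponent} is valid (using that $t_{K_{r-1}^{(r-1)}}(G) < 1$ and $t_F(G)>0$ for every nonempty $(r-1)$-partite $F$), as is the trivial handling of $e(F)=0$; do make sure when writing the details to account for the $(r-1)!$ bookkeeping between $|E(H_{\mathbf{u}})|$ and $\hm(K^{(r-1)}_{r-1}, H_{\mathbf{u}})$, or just work with ordered tuples throughout — the constant is harmless because Proposition~\ref{prop:tensor} tolerates any $c>0$.
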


\begin{proof}[Proof of \Cref{lem:Sidorenko-lifting}]
    Let $H$ be an $n$-vertex $r$-graph. From the definition of the $F(t)$, we have the following.

    \begin{equation}\label{eq:t-star}
        \hm(F(t), H) = \sum_{\phi\in \Hm(F, K_n^{(r-1)})} N_H(\phi(F))^t.
    \end{equation}
    To estimate the right-hand side of \eqref{eq:t-star}, observe that the following identity holds by double counting.

    \begin{equation}\label{eq:identity}
        \sum_{\phi\in \Hm(F, K_n^{(r-1)})} N_H(\phi(F)) = \sum_{v\in V(H)} \hm(F, \calD_H(v)) = \sum_{v\in V(H)} t_F(\calD_H(v)) n^{v(F)}.
    \end{equation}
    Then the following inequalities hold by \eqref{eq:t-star} and \eqref{eq:identity}.

    \allowdisplaybreaks
    \begin{align}
        \hm(F(t), H) &= \sum_{\phi\in \Hm(F, K_n^{(r-1)})} N_H(\phi(F))^t \nonumber\\
        &\geq \left(\frac{\sum_{\phi\in \Hm(F, K_n^{(r-1)})} N_H(\phi(F))}{n^{v(F)}}\right)^t n^{v(F)}\label{eq:t-star-convexity1}\\
        &= \left( \sum_{v\in V(H)} t_F(\calD_H(v)) \right)^t n^{v(F)}\nonumber\\
        &\geq \left[\sum_{v\in V(H)} \left(t_{K_{r-1}^{(r-1)}}(\calD_H(v))\right)^{s(F)} \right]^t n^{v(F)}\label{eq:sido-exp}\\
        &\geq \left[\left(\frac{\sum_{v\in V(H)} \left(t_{K_{r-1}^{(r-1)}}(\calD_H(v))\right)}{n}\right)^{s(F)} n\right]^t n^{v(F)}\label{eq:t-star-convexity2}\\
        &= \left(t_{K_r^{(r)}}(H) \right)^{t\cdot s(F)} n^{v(F) + t}\label{eq:t-star-final}
    \end{align}
    The inequalities \eqref{eq:t-star-convexity1} and \eqref{eq:t-star-convexity2} hold by the convexity of functions $f(x) = x^t$, $g(x) = x^{s(F)}$ and the fact that $\hm(F, K_n^{(r-1)}) = n^{v(F)}$, $v(H) = n$, respectively. From the definition of the Sidorenko exponent, we deduced \eqref{eq:sido-exp}.

    The last inequality \eqref{eq:t-star-final} implies
    \begin{equation*}
        t_{F(t)}(H) = \frac{\hm(F(t), H)}{n^{v(F(t))}} \geq \left( t_{K_r^{(r)}}(H) \right)^{t\cdot s(F)}.
    \end{equation*}
    This completes the proof.
\end{proof}

By applying \Cref{lem:Sidorenko-lifting} iteratively, to finish the proof, it suffices to prove \Cref{thm:main-extremal} for the case $\ell = 1$. 

\begin{lemma}\label{lem:main-extremal-one}
    Let $F$ be an $(r-1)$-graph and $t \geq 1$ be an integer. Then we have
    \begin{equation*}
        \mathrm{ex}(n, F(t)) \leq O_{F, t}(n^{r - \frac{1}{s(F)}}).
    \end{equation*}
\end{lemma}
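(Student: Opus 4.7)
The plan is to mirror the counting step of the Kővári–Sós–Turán argument, replacing the classical convexity of $\binom{d}{t}$ by the defining inequality of the Sidorenko exponent. Assume $H$ is an $n$-vertex $r$-graph with $e(H) \geq C n^{r-1/s(F)}$, where $C = C(F,t)$ will be chosen large; the goal is to exhibit a copy of $F(t)$ inside $H$.

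Consider the quantity $X$ equal to the number of pairs $(\phi, v)$ where $v \in V(H)$ and $\phi$ is an injective homomorphism from $F$ into the downward hypergraph $\calD_H(v)$. The upper bound is immediate: if no injective copy of $F$ in $H$ has at least $t + v(F)$ extenders (vertices $v$ with $\phi(V(F)) \subseteq V(\calD_H(v))$ and $\phi(F) \subseteq \calD_H(v)$), then $X \leq (t + v(F) - 1)\, n^{v(F)}$, since the number of injective maps $V(F) \to V(H)$ is at most $n^{v(F)}$. For the lower bound, for each $v \in V(H)$ the number of injective homomorphisms of $F$ into $\calD_H(v)$ differs from $\hm(F, \calD_H(v))$ by at most $O_F(n^{v(F)-1})$, and the defining inequality of the Sidorenko exponent gives $t_F(\calD_H(v)) \geq (t_{K_{r-1}^{(r-1)}}(\calD_H(v)))^{s(F)}$. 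Summing over $v$, invoking the double-counting identity $\sum_v t_{K_{r-1}^{(r-1)}}(\calD_H(v)) = n \cdot t_{K_r^{(r)}}(H)$ (each edge of $H$ appears in $r$ downward links), and applying Jensen's inequality to the convex map $x \mapsto x^{s(F)}$ (convex because $s(F) \geq e(F) \geq 1$) then yields
\[
X \geq n^{v(F)+1}\left(t_{K_r^{(r)}}(H)\right)^{s(F)} - O_F(n^{v(F)}) \geq (r!\, C)^{s(F)} n^{v(F)} - O_F(n^{v(F)}),
\]
using the edge-count bound $t_{K_r^{(r)}}(H) \geq r!\, C n^{-1/s(F)}$.

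Choosing $C$ sufficiently large in terms of $F$ and $t$ makes the right-hand side strictly exceed $(t + v(F) - 1)\, n^{v(F)}$, contradicting the upper bound. Hence some injective copy $\phi$ of $F$ in $H$ has at least $t + v(F)$ extenders; discarding those (at most $v(F)$) that lie inside $\phi(V(F))$ leaves $t$ distinct vertices outside $\phi(V(F))$, which together with $\phi$ assemble a copy of $F(t)$. The only subtle step is the pointwise inequality $t_F(\calD_H(v)) \geq (t_{K_{r-1}^{(r-1)}}(\calD_H(v)))^{s(F)}$, exactly the one invoked in the proof of \Cref{lem:Sidorenko-lifting}; it follows from the sup-definition of $s(F)$ combined with the tensor power trick of \Cref{prop:tensor}. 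Beyond that, the argument is double counting and Jensen's inequality.
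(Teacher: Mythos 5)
Your proof is correct and follows essentially the same route as the paper's: lower-bound $\sum_{v}\hm(F,\calD_H(v))$ via the pointwise Sidorenko-exponent inequality, the double-counting identity $\sum_v t_{K_{r-1}^{(r-1)}}(\calD_H(v)) = n\,t_{K_r^{(r)}}(H)$, and Jensen, then discard the $O_F(n^{v(F)})$ contribution of non-injective maps and pigeonhole to find an injective copy of $F$ with $\ge t + v(F)$ common extensions. The only cosmetic difference is that you phrase the count as pairs $(\phi,v)$ while the paper writes $\sum_\phi N_H(\phi(F))$, and you leave the constant $C$ implicit rather than fixing $\frac{1}{r!}(t+2v(F))^{1/s(F)}$; both are the same argument.
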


\begin{proof}[Proof of \Cref{lem:main-extremal-one}]
    We will show that for every $n$-vertex $r$-graph with $e(H) \geq \frac{1}{r!}\cdot (t + 2v(F))^{\frac{1}{s(F)}} n^{r - \frac{1}{s(F)}}$ contains a isomomorphic copy of $F(t)$ as a sub-hypergraph. Observe that we have 
    $$
        t_{K_r^{(r)}}(H) \geq (t + 2v(F))n^{-\frac{1}{s(F)}}.
    $$
    
    By the definition of the Sidorenko exponent, for each $v\in V(H)$, we have the following inequality.

    \begin{equation*}
        \hom(F, \calD_H(v)) \geq \left( t_{K_{r-1}}^{(r-1)} (\calD_H(v)) \right)^{s(F)} n^{v(F)}.
    \end{equation*}
    From this, by double counting, we deduce the following inequalities.

    \allowdisplaybreaks
    \begin{align}
        \sum_{(\phi\in \Hm(F, K_n^{(r-1)}))} N_H(\phi(F)) &\geq \sum_{v\in V(H)} \left( t_{K_{r-1}}^{(r-1)} (\calD_H(v)) \right)^{s(F)} n^{v(F)}\nonumber\\
        &\geq \left( \frac{\sum_{v\in v(H)} t_{K_{r-1}}^{(r-1)} (\calD_H(v))}{n} \right)^{s(F)} n^{v(F) + 1}\label{eq:KST-convexity}\\
        &= \left(t_{K_r^{(r)}}(H)\right)^{s(F)} n^{v(F) + 1}\nonumber\\
        &\geq (t + 2v(F)) n^{v(F)}\nonumber\\
        &> (t + 2v(F) - 1) n^{v(F)}.\label{eq:KST-count}
    \end{align}
    The inequality \eqref{eq:KST-convexity} holds by the convexity of the function $f(x) = x^{s(F)}$.
    Note that the number of degenerated homomorphisms $\phi\in \Hm(F, K_n^{(r-1)})$ is at most 
    $$
        \sum_{i\in [v(F) - 1]} n^i \leq v(F)n^{v(F) - 1}.
    $$ Hence, we have the following,

    \begin{equation*}
        \sum_{\substack{[\phi\in \Hm(F, K_n^{(r-1)})\\: \text{ degenerated}]}} N_H(\phi(F)) \leq v(F)n^{v(F)}.
    \end{equation*} Toghther with \eqref{eq:KST-count}, we have
    \begin{equation}\label{eq:KST-proper}
        \sum_{\substack{[\phi\in \Hm(F, K_n^{(r-1)})\\: \text{ proper}]}} > (t + v(F) - 1)n^{v(F)}.
    \end{equation}
    Note that for every proper $\phi\in \Hm(F, K_n^{(r-1)})$, its image $\phi(F)$ is an isomorphic copy of $F$. Since there at most $n^{v(F)}$ proper homomorphisms of $F$ in $K_n^{(r-1)}$, by the pigeonhole principle and \eqref{eq:KST-proper}, there is a proper homomorphism $\phi\in \Hm(F, K_n^{(r-1)})$ such that $N_H(\phi(F)) \geq t + v(F)$. 
    This implies that there are at least $t$ vertices outside of the $\phi(F)$, which is isomorphic to $F$, such that together with all $(r-1)$-uniform edges of $F$ form edges of $H$. This means $H$ contains $F(t)$ as a sub-hypergraph, so 
    $$
        \mathrm{ex}(n, F(t)) < \frac{1}{r!}\cdot (t + 2v(F))^{\frac{1}{s(F)}} n^{r - \frac{1}{s(F)}}.
    $$ 
    This completes the proof.
\end{proof}

We are now ready to prove \Cref{thm:main-extremal}.

\begin{proof}[Proof of \Cref{thm:main-extremal}]
    Let $r > \ell \geq 1$ be integers and $F$ be an $(r - \ell)$-graph. By \Cref{lem:Sidorenko-lifting}, for all integers $t_1, \dots, t_{\ell} > 0$, we have the following.
    $$
        s(F(t_1, \dots, t_{\ell - 1})) \leq s(F) \prod_{i\in [\ell - 1]} t_i.
    $$ 
    Hence, \Cref{lem:main-extremal-one} implies
    $$
        \mathrm{ex}(n, F(t_1, \dots, t_{\ell})) = \mathrm{ex}(n, F(t_1, \dots, t_{\ell - 1})(t_{\ell})) \leq O_{F, t_1, \dots, t_{\ell}}
        \left(n^{r - \frac{1}{S(F) \prod_{i\in [\ell - 1]} t_i}} \right).
    $$
    This completes the proof.
\end{proof}


\section{Applying main theorems}\label{sec:proof-applications}
In this section, we prove \Cref{thm:tight-3-cycle,thm:sparse,thm:grid} by using \Cref{thm:main-domination}.
Our method to bound Sidorenko exponents is to find small and sparse dominating hypergraphs that contain each link hypergraph, and then apply \Cref{thm:main-extremal}. To finish the proof, we frequently use the following simple observation.

\begin{observation}\label{obs:subgraph}
    Let $F$ and $F'$ be $r$-graphs such that $F$ is a sub-hypergraph of $F'$. Then for all $r$-graphs $H$, the following inequality holds.
    \begin{equation*}
        t_{F}(H) \geq t_{F'}(H).
    \end{equation*}
    In particular, $s(F) \leq s(F')$.
\end{observation}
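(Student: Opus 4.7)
The plan is to prove the two assertions separately: first establishing the density inequality $t_F(H) \geq t_{F'}(H)$ by a restriction-of-homomorphisms argument, and then deriving the exponent bound $s(F) \leq s(F')$ as a formal consequence.

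For the density inequality, write $n = v(H)$. Since $E(F) \subseteq E(F')$, any homomorphism $\phi' \in \Hm(F', H)$ restricts to a homomorphism $\phi'\vert_{V(F)} \in \Hm(F, H)$, which defines a restriction map $\Hm(F', H) \to \Hm(F, H)$. Conversely, for each fixed $\phi \in \Hm(F, H)$, the number of functions $V(F') \to V(H)$ extending $\phi$ is exactly $n^{v(F') - v(F)}$, so at most that many of them are homomorphisms of $F'$. Summing the fiber sizes gives $\hm(F', H) \leq \hm(F, H) \cdot n^{v(F') - v(F)}$, and dividing both sides by $n^{v(F')}$ yields the desired inequality $t_{F'}(H) \leq t_F(H)$.

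For the exponent bound, note first that the Sidorenko exponent is only defined for $r$-partite $r$-graphs, so I may assume both $F$ and $F'$ are $r$-partite (if $F'$ is $r$-partite then $F \subseteq F'$ inherits an $r$-partition). A useful preliminary observation is that for any $r$-partite $r$-graph $G$ and any $H$ with at least one edge, $t_G(H) > 0$: picking one edge $\{a_1, \dots, a_r\}$ of $H$ and mapping every vertex in the $i$-th part of $G$ to $a_i$ yields a valid homomorphism. Consequently, for every $H$ with $t_{K_r^{(r)}}(H) \in (0,1)$, both $t_F(H)$ and $t_{F'}(H)$ lie in $(0,1)$. On such $H$, define $s_H^{(G)} \defeq \log t_G(H) / \log t_{K_r^{(r)}}(H)$ for $G \in \{F, F'\}$; the density inequality $t_F(H) \geq t_{F'}(H)$, after taking logs and dividing by $\log t_{K_r^{(r)}}(H) < 0$ (which flips the inequality), becomes $s_H^{(F)} \leq s_H^{(F')}$. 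Taking the supremum over all such $H$ in the definition of Sidorenko exponent then yields $s(F) \leq s(F')$.

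There is no substantive obstacle here, only a minor bookkeeping concern about edge cases where $t_{K_r^{(r)}}(H) \in \{0, 1\}$ or $t_F(H) = 0$. These are automatically excluded by the supremum definition of $s(F)$, which ranges only over $H$ with $t_F(H) = t_{K_r^{(r)}}(H)^s > 0$, so they cause no issue. The entire argument should fit in just a few lines.
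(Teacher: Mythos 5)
Your proof is correct, and since the paper states this observation without proof (treating it as immediate), your argument supplies the standard justification that the author implicitly had in mind. The density inequality is exactly the fiber-counting argument one would expect: restrict each $\phi' \in \Hm(F', H)$ to $V(F)$, note the fibers have size at most $n^{v(F') - v(F)}$, and normalize. Your derivation of $s(F) \leq s(F')$ is also sound, and you handle the one genuinely subtle point — ensuring the relevant suprema range over the same set of host graphs — by observing that for $r$-partite $F$, $F'$ and any $H$ with an edge, both $t_F(H)$ and $t_{F'}(H)$ are strictly positive, so the per-$H$ exponents $s_H^{(F)} \leq s_H^{(F')}$ are simultaneously defined and the supremum comparison goes through. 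No gaps.
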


\subsection{Proof of \Cref{thm:tight-3-cycle}}\label{subsec:proof-tightcycle}

\begin{proof}[Proof of \Cref{thm:tight-3-cycle}]
    Let $F$ be a $3$-uniform tight cycle of length $3\ell$, where $\ell \geq 2$. Assume that $V(F) = [3\ell]$ and $E(F) = \{(i, i+1, i+2): i\in [3\ell]\}$ where addition is taken mod $3\ell$. For each $i \in \{0, 1, 2\}$, let $V_i$ be the set of numbers in $[3\ell]$ congruent with $i$ by modulo $3$. Observe that $F$ is a $3$-partite $3$-graph on vertex partition $V_0 \cup V_1 \cup V_2$. Observe that $G\defeq \bigcup_{v\in V_2} \calL_F(v)$ is a cycle of length $2\ell$ and each link graph $\calL_F(v)$ is a path of length $3$ contained in $G$. Now introduce a new vertex $u$ to make a new $3$-graph $F'$ as follows.
    $$
        V(F') \defeq V(F) \cup \{u\},
    $$
    and
    $$
        E(F') \defeq E(F) \cup \{e\cup \{u\}: e\in E(G)\}.
    $$
    Then the link profile of $F'$ is $(G, P_1, \dots, P_{\ell})$, where $G$ is isomorphic to an even cycle of length $2\ell$ and each $i\in [\ell]$, the graph $P_i$ is isomorphic to a path of length $3$, which is contained in $G$. Since even cycles are dominating as we discussed in \Cref{subsec:norming-dominating}, \Cref{thm:main-extremal} can be applied to $F'$. As even cycles are $2$-regular graphs, we have the following by \Cref{thm:main-extremal}
    \begin{equation*}
        s(F') \leq 2\ell + \sum_{i\in [\ell]} 5 = 7\ell.
    \end{equation*}
    As $F$ is a sub-hypergraph of $F'$, we deduce the following from \Cref{obs:subgraph} and the above inequality.
    \begin{equation*}
        s(F) \leq s(F') \leq 7\ell.
    \end{equation*}
    This completes the proof.
\end{proof}

\subsection{Proof of \Cref{thm:sparse}}\label{subsec:proof-sparse}

The proof of \Cref{thm:sparse} is almost the same as the proof of \Cref{thm:tight-3-cycle}. The only essential difference is the following. By the structure of $3$-partite $3$-uniform tight cycles, we can bound their Sidorenko exponents without paying much cost. However, in general, we do not know the existence of sufficiently small and sparse dominating hypergraphs that contain all link hypergraphs of a given hypergraph. Thus, to prove \Cref{thm:sparse}, we use the fact that complete $r$-partite $r$-graphs are dominating hypergraphs.

\begin{proof}[Proof of \Cref{thm:sparse}]
    Let $F$ be an $r$-partite $r$-graph on vertex partition $V_1 \cup \cdots \cup V_r$, where $|V_i| = t_i$ for each $i\in [r]$. We may assume that $F$ has no isolated vertices. Let $t\defeq \sum_{i\in [r]} t_i$ and assume $e(F) = c t$ for some positive real number $c > 0$. Without loss of generality, assume that $t_1 \leq \cdots, \leq t_r$. Since $F$ does not have an isolated vertex, $c \geq \frac{t_r}{t} \geq \frac{1}{r}$.
    
    Let $(L_1, \dots, L_{t_r})$ be the link profile of $F$. Note that for each $i\in [t_r]$, the link hypergraph $L_i$ is a sub-hypergraph of $K^{(r-1)}_{t_1, \dots, t_{r-1}}$. Define a new hypergraph $F'$ by introducing a new vertex $u$ as follows.
    $$
        V(F') \defeq V(F) \cup \{u\},
    $$
    and
    $$
        E(F') \defeq E(F) \cup \{e\cup \{u\}: e\in \prod_{i\in [r-1]} V_i\}.
    $$
    Since every complete $(r-1)$-partite $(r-1)$-graphs are dominating as discussed in \Cref{subsec:norming-dominating}, \Cref{thm:main-domination} can be applied to $F'$. By \Cref{thm:main-domination}, the following holds.
    \begin{equation}\label{eq:sparse}
        s(F') \leq \prod_{i\in [r-1]} t_i + \sum_{j\in [t_r]} d_{K^{(r-1)}_{t_1, \dots, t_{r-1}}}\left( V(L_i) \right).
    \end{equation}
    Observe that for each $j\in [r-1]$ and $v\in V_j$, the degree of $v$ in $K^{(r-1)}_{t_1, \dots, t_{r-1}}$ is $\prod_{i\in [r-1]\setminus \{j\}} t_i$.
    Also observe that for each $i\in [t_r]$ and $j\in [r-1]$, the inequaliy $\lvert V_i \cap V(L_i) \rvert \leq e(L_i)$ holds. Hence, we obtain the following inequality for each $i\in [t_r]$.
    \allowdisplaybreaks
    \begin{align}
        d_{K^{(r-1)}_{t_1, \dots, t_{r-1}}}\left( V(L_i) \right) &\leq e(L_i) \sum_{j\in [r-1]} \left(\prod_{k\in [r-1]\setminus \{j\}} t_k\right).\nonumber \\
        &\leq e(L_i) \sum_{j\in [r-1]} t_j^{r-2}\label{eq:Muirhead}\\
        &\leq e(L_i) t^{r-2}.\nonumber
    \end{align}
    The inequality \eqref{eq:Muirhead} holds by Muirhead's inequality.

    Note that $e(F) = \sum_{i\in [t_r]} e(L_i) = ct$. Hence, \eqref{eq:sparse} implies
    \allowdisplaybreaks
    \begin{align*}
        s(F') &\leq \prod_{i\in [r-1]} t_i + \sum_{j\in [t_r]} d_{K^{(r-1)}_{t_1, \dots, t_{r-1}}}\left( V(L_i) \right)\\
        &\leq \left(\frac{t}{r}\right)^{r-1} + \sum_{i\in [t_r]} e(L_i)t^{r-1}\\
        &\leq \left(\frac{t}{r}\right)^{r-1} + ct^{r-1}\\
        &\leq 2c t^{r-1}.
    \end{align*}
    The last inequality holds since $c \geq \frac{1}{r}$.

    As $F$ is a sub-hypergraph of $F$, by \Cref{obs:subgraph},
    \begin{equation*}
        s(F) \leq s(F') \leq 2c t^{r-1}.
    \end{equation*}
    This completes the proof.
\end{proof}

\subsection{Proof of \Cref{thm:grid}}\label{subsec:proof-grid}
For an integer $\ell \geq 3$, denote $C_{\ell}$ by the cycle of length $\ell$. To prove \Cref{thm:grid}, we need the following lemma. Denote $G_k$ and $T_k$ by the $(k \times k)$-grid and $C_{2k} \otimes C_{2k}$, respectively.

\begin{lemma}\label{lem:cycle-tensor}
    For any integer $k \geq 2$, the graph $T_k$ satisfies the following.
    \begin{enumerate}
        \item[$(a)$] $T_k$ is a $4$-regular graph on $4k^2$ vertices.
        \item[$(b)$] $T_k$ is a dominating graph. 
        \item[$(c)$] $T_k$ contains $G_k$ as a subgraph. 
    \end{enumerate}
\end{lemma}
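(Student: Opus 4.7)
\emph{Proof plan.} I will verify the three parts (a), (b), (c) separately, as each is of a different flavor.

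Part (a) is immediate from the definition of the tensor product: $V(T_k) = V(C_{2k}) \times V(C_{2k})$ has size $(2k)^2 = 4k^2$, and every vertex $(u,v)$ has degree $\deg_{C_{2k}}(u) \cdot \deg_{C_{2k}}(v) = 2 \cdot 2 = 4$.

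For (b), the strategy is to apply the machinery of \Cref{subsec:norming-dominating}. Since $C_{2k}$ is an even cycle it is norming, and hence weakly norming. By \Cref{prop:weakly-tensor}, $T_k = C_{2k} \otimes C_{2k}$ is also weakly norming, and as recalled in \Cref{subsec:norming-dominating}, every weakly norming hypergraph is dominating.

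For (c), I will construct an explicit injective graph homomorphism $\phi \colon V(G_k) \to V(T_k)$. Identifying $V(C_{2k}) = \ZZ_{2k}$ with edges $\{i,i+1\}$ and $V(G_k) = \{1,\dots,k\}^2$ with its axis-aligned grid edges, the proposed map is $\phi(a,b) \defeq (a+b,\, a-b) \pmod{2k}$. Under $\phi$, a horizontal edge $(a,b)(a+1,b)$ is sent to a pair of vertices differing by $(1,1)$ and a vertical edge $(a,b)(a,b+1)$ to a pair differing by $(1,-1)$, both of which are edges of $T_k$ by the definition of the tensor product. For injectivity, $\phi(a,b) = \phi(a',b')$ gives $2a \equiv 2a'$ and $2b \equiv 2b' \pmod{2k}$, hence $a \equiv a' \pmod{k}$ and $b \equiv b' \pmod{k}$; since all coordinates lie in $\{1,\dots,k\}$, this forces $(a,b)=(a',b')$, so the image of $\phi$ is a subgraph of $T_k$ isomorphic to $G_k$.

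The only subtle point is the injectivity step in (c): a naive embedding, such as the induced subgraph of $T_k$ on $\{1,\dots,k\}^2$, fails because the edges of $T_k$ run diagonally rather than along the axes. The $45^{\circ}$-rotation built into $\phi$ is precisely what converts axis-aligned grid edges into the diagonal edges of the tensor product, and the choice of cycles of length $2k$ (rather than $k$) provides exactly the room needed for the rotated $k \times k$ grid to avoid modular collisions.
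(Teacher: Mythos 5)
Your proof is correct and takes essentially the same approach as the paper: parts (a) and (b) are verified identically, and for (c) the paper also uses a $45^\circ$-rotation embedding of the grid, namely the vertex set $U = \{(k+i-j-1,\, i+j-1) : i,j\in[k]\}$, which is your map $(a,b)\mapsto(a+b,\,a-b)$ up to a coordinate swap and constant shifts. Your writeup is in fact slightly more detailed on the injectivity check, which the paper leaves to the reader.
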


\begin{proof}[Proof of \Cref{lem:cycle-tensor}]
    The statement $(a)$ directly follows from the definition of the tensor product, and $(b)$ follows from \Cref{prop:weakly-tensor}. Thus, it suffices to prove $(c)$ to complete the proof.

    From the definition of the tensor product, the graph $T_k$ has vertex and edge sets as follows.
    $$
        V(T_k) \defeq [2k] \times [2k]
    $$
    and
    $$
        E(T_k) \defeq \{\{(i,j), (i', j')\}\in \binom{V(T_k)}{2}: \lvert i - i' \rvert \equiv \lvert j - j' \rvert \equiv 1 \pmod{2k}\}\}.
    $$

    Let $U \defeq \{(k + i - j - 1, i + j - 1): i, j\in [k]\}$ be a vertex subset of $T_k$. Then it is straightforward that the subgraph of $T_k$ induced by the vertex subset $U$ is isomorphic to $G_k$. This completes the proof.
\end{proof}

With \Cref{lem:cycle-tensor} in mind, the proof of \Cref{thm:grid} is almost the same as that of \Cref{thm:tight-3-cycle,thm:sparse}.

\begin{proof}[Proof of \Cref{thm:grid}]
    Let $F$ be a $3$-partite $3$-graph with link profile $(L_1, \dots, L_t)$ such that $\bigcup_{i\in [t]} L_i$ is a subgraph of $G_k$. Define a $3$-graph $F'$ as follows.
    $$
        V(F') \defeq V(T_k) \cup \{u\}, 
    $$ where $u$ is disjoint from $V(T_k)$ and
    $$
        E(F') \defeq E(F) \cup \{e\cup \{u\}: e\in E(T_k)\}.
    $$ 
    
    Since $T_k$ is a bipartite graph, the new graph $F'$ is a $3$-partite $3$-graph with link profile $(T_k, L_1, \dots, L_t)$. By $(c)$ of \Cref{lem:cycle-tensor}, $G_k$ is a subgraph of $T_k$, the link graph $L_i$ is a subgraph of $T_k$ for all $i\in [k]$. Also, by $(b)$ of \Cref{lem:cycle-tensor}, $T_k$ is a dominating graph. Thus, \Cref{thm:main-domination} can be applied to $F'$.

    By $(a)$ of \Cref{lem:cycle-tensor}, we obtain the following.
    \begin{equation*}
        s(F') \leq 8k^2 + \sum_{i\in [t]} d_M(L_i) \leq 8k^2 + \sum_{i\in [t]} 8 e(L_i) = 8k^2 + 8e(F).
    \end{equation*}
    Since $F$ is a sub-hypergraph of $F'$, we have $s(F) \leq 8 k^2 + 8e(F)$ by \Cref{obs:subgraph}. This completes the proof.
    
\end{proof}


\section{Concluding remarks}\label{sec:concluding}
In this article, we found a new broad class of Sidorenko hypergraphs and also suggested a framework for bounding Sidorenko exponents of various hypergraphs. In particular, as an application of our framework, \Cref{thm:tight-3-cycle} provides an upper bound on the Sidorenko exponent of a $3$-uniform tight cycle, which is tight up to a multiplicative constant. We believe that $3$-partite $3$-uniform tight cycles are indeed nearly Sidorenko. Hence, we raise the following conjecture.

\begin{conjecture}\label{conj:3-tight-cycle}
    \begin{equation*}
        s\left(C^{(3)}_{3\ell} \right)  = (3 + o_{\ell}(1))\ell.
    \end{equation*}
\end{conjecture}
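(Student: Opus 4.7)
The conjecture asserts the upper bound $s(C^{(3)}_{3\ell})\le (3+o(1))\ell$; the matching lower bound $s(F)\ge e(F)=3\ell$ is automatic for every $r$-partite $r$-graph, so the entire content lies in the upper bound. The proof of \Cref{thm:tight-3-cycle} already reaches $7\ell$, and the gap to the conjectured $3\ell$ is fully transparent: inside the covering cycle $C_{2\ell}=\bigcup_{v\in V_2}\calL_F(v)$, each $4$-vertex path link $P_i$ satisfies $d_{C_{2\ell}}(V(P_i))=5$ rather than $e(P_i)=3$, the excess of $2$ being the two boundary edges of the $4$-window inside the cycle. Summed over the $\ell$ links this contributes $2\ell$ of slack, and the remaining $2\ell$ slack comes from the contribution of $C_{2\ell}$ itself in the bound of \Cref{thm:main-domination}. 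Collapsing either term requires abandoning the single global dominating graph and exploiting instead the fact that each individual $P_i$ is Sidorenko with exponent $3$.

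My plan is to strengthen the unified main theorem (\Cref{thm:unified}) so that it tolerates link profiles whose Sidorenko components \emph{overlap} in a controlled way. Concretely, I would peel off the links of the auxiliary $3$-graph $F'$ sequentially around the cycle: after fixing a homomorphic image of $P_i$ in $H$, count extensions to $P_{i+1}$ conditionally on the two shared vertices already being pinned. In the ideal case each conditional step would contribute only the Sidorenko exponent of the one new edge of $P_{i+1}$ not already present in $P_i$ -- a contribution of $1$ in the exponent -- so that the total exponent telescopes to roughly $\ell+O(1)$ plus the exponent of the first link, well below $4\ell$. To convert multiplicative $o(1)$ losses accumulated at each conditional step into an additive $o(\ell)$ loss in the final Sidorenko exponent, I would iterate the construction and invoke the tensor power trick of \Cref{prop:tensor}, which is tailored to exactly this purpose.

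The main obstacle is that the required conditional Sidorenko-type inequality for a $P_3$ with two endpoints pinned is itself nontrivial: pinning vertices can simultaneously decrease both $t_{P_i}(H)$ and the effective edge density, by different amounts, so a naive peeling cannot telescope tightly. I expect this to be the hard step, and a promising parallel route is the entropy method: writing $\mathbf{H}(\phi)$ for the entropy of a uniformly random $\phi\in\Hm(C^{(3)}_{3\ell},H)$ and applying Shearer's inequality to the cyclic cover of $[3\ell]$ by overlapping $r$-windows should directly yield $t_{C^{(3)}_{3\ell}}(H)\ge t_{K^{(3)}_3}(H)^{(3+o(1))\ell}$, the $o(1)$ correction arising from the cyclic closure of the tight path. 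If neither route closes the gap entirely, a natural intermediate target is $s(C^{(3)}_{3\ell})\le(3+c)\ell$ for some absolute constant $c<4$, which looks achievable by replacing $C_{2\ell}$ in the proof of \Cref{thm:tight-3-cycle} with a tailored subgraph of $C_{2\ell}\otimes C_{2\ell}$ whose extra edges let the links be covered with smaller local excess while remaining dominating via \Cref{prop:weakly-tensor}.
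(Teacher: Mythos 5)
You are addressing Conjecture~\ref{conj:3-tight-cycle}, which the paper poses as an \emph{open problem} in the concluding section and does not prove; the best the paper establishes is $s(C^{(3)}_{3\ell}) \leq 7\ell$ (Theorem~\ref{thm:tight-3-cycle}), with the strict lower bound $s(C^{(3)}_{3\ell}) > 3\ell$ from Theorem~\ref{thm:tightcycle-not-sidorenko}. Your diagnosis of the $4\ell$ slack in the $7\ell$ bound is accurate: $2\ell$ from $e(C_{2\ell})$ and $2\ell$ from the two boundary edges each $4$-vertex window $P_i$ picks up inside the cycle. But what you have written is a research plan with acknowledged gaps, not a proof, and two of your three routes have concrete problems. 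The entropy route is directionally backwards: Shearer's inequality with the cyclic cover of $[3\ell]$ by $3$-windows bounds $H(\phi) = \log \hm(C^{(3)}_{3\ell},H)$ from \emph{above} by $\tfrac{1}{3}\sum_e H(\phi_e)$, i.e.\ it gives an upper bound on $t_F(H)$, whereas you need the lower bound $t_F(H) \geq t_{K_3^{(3)}}(H)^{(3+o(1))\ell}$. Lower bounds require a Szegedy-style construction of a specific high-entropy random homomorphism, and precisely because the tight cycle is non-Sidorenko that construction cannot reach exponent $3\ell$; making it reach $(3+o(1))\ell$ is exactly the open content.

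The third route, replacing $C_{2\ell}$ by a dominating subgraph $T'$ of $C_{2\ell}\otimes C_{2\ell}$, cannot beat $7\ell$ within the framework of Theorem~\ref{thm:main-domination}. Any dominating $M$ admitting an embedding of the cyclically-linked $P_i$'s must contain a copy of $C_{2\ell}$ with the $P_i$'s as its consecutive $4$-windows, so $e(M) \geq 2\ell$ and $d_M(V(P_i)) \geq 5$ for each $i$ (for $\ell \geq 3$), giving $\sum_i d_M(V(\calL_{F'}(v_i))) \geq 7\ell$; the $2$-regular even cycle is already the extremal choice, so ``$(3+c)\ell$ with $c<4$'' is not achievable by this mechanism. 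The only genuinely promising idea you raise --- a conditional, telescoping Sidorenko inequality that peels one new edge per window around the cycle --- is the right spirit, but you yourself flag it as the hard step you cannot carry out, and no such local inequality is currently known. As it stands the proposal neither proves the conjectured bound nor matches the paper, which offers no proof either.
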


For the higher uniformity cases, we could not find reasonable upper bounds on Sidorenko exponents of tight cycles. However, we conjecture that there would be an analogous upper bound as \Cref{thm:tight-3-cycle} for $r$-partite $r$-uniform tight cycles for all $r \geq 4$. 

\begin{conjecture}\label{conj:k-tight-cycle}
    For all $r \geq 4$, there is a constant $K_r$ only depending on $r$ such that the following holds for all $\ell \geq 2$.
    \begin{equation*}
        s\left(C^{(r)}_{r\ell} \right) \leq K_r r\ell.
    \end{equation*}
\end{conjecture}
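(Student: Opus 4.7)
\begin{proofsketch}
The plan is to mimic the strategy in the proof of Theorem~\ref{thm:tight-3-cycle}. Writing $F = C^{(r)}_{r\ell}$ and viewing it as an $r$-partite $r$-graph with parts $V_0, \dots, V_{r-1}$ of size $\ell$, a direct inspection of the cyclic edge structure shows that for each $v\in V_{r-1}$ the link $\calL_F(v)$ is a tight $(r-1)$-path of length $r$ on $2r-2$ vertices, and the union $G \defeq \bigcup_{v\in V_{r-1}} \calL_F(v)$ is (up to relabelling) the tight $(r-1)$-cycle $C^{(r-1)}_{(r-1)\ell}$.

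The reduction step is then as follows. Suppose one can exhibit a dominating $(r-1)$-graph $M$ containing $G$ as a labelled sub-hypergraph and satisfying $e(M) = O_r(\ell)$ and $\Delta(M) = O_r(1)$. Form $F'$ by adjoining a single new vertex $u$ to $F$ with $\calL_{F'}(u) = M$, so that the link profile of $F'$ is $(M, \calL_F(v_1), \dots, \calL_F(v_\ell))$ and each $\calL_F(v_i) \subseteq G \subseteq M$. Theorem~\ref{thm:main-domination} applied to $F'$ then yields
\begin{equation*}
    s(F') \;\leq\; e(M) + \sum_{i=1}^{\ell} d_M\bigl(V(\calL_F(v_i))\bigr) \;\leq\; e(M) + (2r-2)\Delta(M)\cdot \ell \;=\; O_r(\ell),
\end{equation*}
and since $F$ is a sub-hypergraph of $F'$, Observation~\ref{obs:subgraph} gives $s(F) \leq s(F') \leq K_r\, r\ell$ for a constant $K_r$ depending only on $r$.

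The main obstacle, and the only nontrivial combinatorial content of the strategy, is constructing the dominating $(r-1)$-graph $M$. For $r = 3$ the union $G = C_{2\ell}$ is itself weakly norming, so one simply takes $M = G$; but for $r \geq 4$ Theorem~\ref{thm:tightcycle-not-sidorenko} tells us that $C^{(r-1)}_{(r-1)\ell}$ is not even Sidorenko, let alone dominating, and $M$ must be chosen strictly larger than $G$. My natural first attack would be to embed $G$ into a tensor-product construction built from known weakly norming building blocks, leveraging Proposition~\ref{prop:weakly-tensor} together with norming hypergraphs such as $K^{(r-1)}_{2,\dots,2}$ or complete $(r-1)$-partite hypergraphs, in a manner analogous to the embedding of the $k\times k$ grid into $C_{2k} \otimes C_{2k}$ from Lemma~\ref{lem:cycle-tensor}. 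Failing that, one could attempt a recursive construction via the finite-reflection framework of~\cite{Finite-reflection, dominating-graphs}, or add several auxiliary vertices rather than just one, so that the combined link structures cover $G$ through sparser dominating pieces. I expect this sparse embedding problem, rather than the reduction scheme itself, to be the genuinely hard part of the conjecture, and progress on it is likely the bottleneck preventing a clean linear bound in all uniformities.
\end{proofsketch}
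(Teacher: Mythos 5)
This statement is labelled as a \emph{conjecture} in the paper, and the authors explicitly state that they could not prove it: ``For the higher uniformity cases, we could not find reasonable upper bounds on Sidorenko exponents of tight cycles.'' Your proposal does not close that gap, and you say so yourself --- what you have is a reduction scheme, not a proof.

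That said, the reduction itself is sound and is precisely the one the paper uses for $r=3$ in \Cref{thm:tight-3-cycle}. Your analysis of the link structure of $C^{(r)}_{r\ell}$ is correct: each link is a tight $(r-1)$-path with $r$ edges on $2r-2$ vertices, the union $G$ of the links of one colour class is the tight cycle $C^{(r-1)}_{(r-1)\ell}$, and the inequality
\begin{equation*}
    s(F') \leq e(M) + \sum_{i\in [\ell]} d_M\bigl(V(\calL_F(v_i))\bigr) \leq e(M) + (2r-2)\Delta(M)\ell
\end{equation*}
follows correctly from \Cref{thm:main-domination} and \Cref{obs:subgraph} once a suitable $M$ is supplied. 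You also correctly locate where the hard combinatorics lives: producing a dominating $(r-1)$-graph $M \supseteq G$ with $e(M) = O_r(\ell)$ and $\Delta(M) = O_r(1)$. One small imprecision: you invoke \Cref{thm:tightcycle-not-sidorenko} to say $C^{(r-1)}_{(r-1)\ell}$ is not Sidorenko for all $r \geq 4$, but that theorem is stated only for $3$-uniform tight cycles, so it justifies the claim only at $r=4$; for $r \geq 5$ you would need a separate argument (though the conclusion you draw from it --- that $M$ must be chosen strictly larger than $G$ --- is almost certainly still correct, since being dominating is a much stronger requirement than being Sidorenko).

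The bottom line is that the genuinely missing step in your sketch is the sparse-dominating-container construction, and this is exactly the open problem that keeps the statement a conjecture rather than a theorem. The tensor-product and finite-reflection suggestions you float are reasonable directions to explore, but as written none of them produces the needed $M$, so the proposal does not constitute a proof.
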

Note that the current best upper bound is $s\left( C^{(r)}_{r\ell} \right) \leq 2r^{r-1} \ell^{r-1}$, obtained by applying \Cref{thm:sparse}.

Inspired by \Cref{thm:sparse}, it would be very interesting to find a linear upper bound for Sidorenko exponents of sparse hypergraphs, especially hypergraphs with bounded maximum degree. We do not have strong confidence in this, so we do not make this a conjecture, but instead, we raise a problem as follows.

\begin{problem}\label{prob:sparse}
    For every integer $r\geq 2$, does there exist a function $f_r: \mathbb{N} \to \mathbb{N}$ that satisfies the following?:
    For all $r$-graph $F$ with maximum degree at most $\Delta$, the inequality
    \begin{equation*}
        s(F) \leq f_r(\Delta) e(F)
    \end{equation*}
    holds.
\end{problem}

To answer \Cref{prob:sparse} affirmatively, one possible approach that fits with our framework is showing that for every bounded degree hypergraph, there is a sufficiently small and sufficiently sparse dominating hypergraph that contains the given hypergraph. However, this direction seems hard to achieve because there are not many known dominating hypergraphs so far.

For further applications of our framework, we would like to emphasize the role of \Cref{prop:weakly-tensor}. Let $G_1$ and $G_2$ be $d_1$-regular and $d_2$-regular graph. Then $G_1 \otimes G_2$ is a $d_1d_2$-regular graph on $v(G_1)v(G_2)$ vertices. Also, every eigenvalue of $G_1 \otimes G_2$ is in form of $\lambda \mu$, where $\lambda$ and $\mu$ are eigenvalues of $G_1$ and $G_2$, respectively. Since weakly norming graphs are closed under the tensor product, if we iteratively take a tensor product with a balanced complete bipartite graph and even cycles, then one can obtain a sufficiently sparse bipartite expander, which is dominating. Since expander graphs contain large sparse graphs as subgraphs, this argument, together with a method demonstrated in \Cref{sec:proof-applications}, we believe that one can obtain a reasonable bound on Sidorenko exponents of sparse $3$-partite $3$-graphs.

Not only did we provide a general framework for bounding the Sidorenko exponents, but we also discovered a new connection between the upper bounds of Sidorenko exponents and the upper bounds of extremal numbers, which gives another motivation for the research on Sidorenko exponents. In particular, \Cref{thm:main-extremal} generalizes the classical K\H{o}v\'{a}ri--S\'{o}s--Tur\'{a}n theorem and its hypergraph analogue, and also the proof of \Cref{thm:main-extremal} gives an alternative proof of \Cref{thm:KST}.

Indeed, \Cref{thm:KST} is tight in some sense. For all $r \geq 2$ and $t_1, \dots, t_{r-1}$, there is a positive integer $t' > 0$ such that for all $t > t'$, we have
\begin{equation}\label{eq:KST-lowerbound}
    \mathrm{ex}(n, K^{(r)}_{t_1, \dots, t_{r-1}, t}) \geq \Omega_{t_1,\dots, t_{r-1}} \left(n^{r - \frac{1}{\prod_{i\in [r-1]}t_i}} \right),
\end{equation}
which matches the bound provided by \Cref{thm:KST}.
The graph case $(r = 2)$ of \eqref{eq:KST-lowerbound} was proved by Koll\'{a}r, R\'{o}nayi, and Szab\'{o}~\cite{Kollar-Ronyai-Szabo} by introducing the notion of norm graphs, and this was further refined by Alon, R\'{o}nayi, and Szab\'{o}~\cite{Alon-Ronyai-Szabo}. Fairly recently, Bukh~\cite{Bukh} significantly improved the threshold of $t$ that \eqref{eq:KST-lowerbound} holds by using a random algebraic construction.

For higher uniformities, Ma, Yuan, and Zhang~\cite{Ma-Yuan-Zhang} proved that \eqref{eq:KST-lowerbound} holds for sufficiently large $t$. Later, an explicit upper bound of the threshold for $t$ that makes \eqref{eq:KST-lowerbound} true was obtained by Pohoata and Zakharov~\cite{norm-hypergraphs} by generalizing norm graphs to hypergraphs.

Hence, it is natural to conjecture that \Cref{thm:main-extremal} is the best possible for sufficiently large $t$.

\begin{conjecture}\label{conj:KST-type-lowerbound}
    Let $r \geq 2$ be an integer and $F$ be an $(r-1)$-graph. Then there exists a positive integer $C_F$ such that the following holds for all $t > C_F$.
    \begin{equation*}
        \mathrm{ex}(n, F(t)) \geq \Omega_F\left( n^{r - \frac{1}{s(F)}} \right).
    \end{equation*}
\end{conjecture}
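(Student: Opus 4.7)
The plan is to attempt \Cref{conj:KST-type-lowerbound} by extending the random lift construction of Conlon--Lee--Sidorenko and Nie--Spiro from the setting of $F$ to that of $F(t)$, using a Sidorenko-extremal witness for $s(F)$. Fix $\epsilon > 0$ and use the definition of $s(F)$ together with the tensor power trick (see the proof of \Cref{prop:tensor}) to produce an $(r-1)$-graph $G$ on arbitrarily many vertices $N$ satisfying $t_F(G) \leq t_{K_{r-1}^{(r-1)}}(G)^{s(F)-\epsilon}$. Write $p := t_{K_{r-1}^{(r-1)}}(G)$.

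Construct $H$ as a random lift of $G$: take $V(H) = V(G) \sqcup W$ with $|W| = n - N$, and for each $e \in E(G)$ and each $w \in W$ include $e \cup \{w\}$ as an edge of $H$ independently with probability $q$. Standard concentration gives $\mathbb{E}[e(H)] \asymp p q N^{r-1} |W|$. Because every edge of $H$ contains exactly $r-1$ vertices from $V(G)$ and one vertex from $W$, any copy of $F(t)$ in $H$ must embed $V(F)$ inside $V(G)$ and the other $t$ vertices inside $W$, so
\[
\mathbb{E}[\#F(t)\text{-copies}] \leq \hm(F,G)\cdot |W|^t \cdot q^{t\, e(F)} \leq p^{s(F)-\epsilon}\, N^{v(F)}\, |W|^t\, q^{t\, e(F)}.
\]
Optimise $N$, $|W|$, and $q$ in terms of $n$ and $t$ to maximise $\mathbb{E}[e(H)]$ subject to the deletion condition $\mathbb{E}[\#F(t)\text{-copies}] \leq \tfrac{1}{2} \mathbb{E}[e(H)]$, then delete one edge per copy to produce an $F(t)$-free sub-hypergraph whose edge count matches the conjectured lower bound.

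The main obstacle is that this analysis is not strong enough. Writing $p = n^{-\tau}$, $q = n^{-\gamma}$, and $N \asymp |W| \asymp n$, the deletion condition in the limit $t \to \infty$ reduces to $\gamma \geq 1/e(F)$, and the objective $\tau + \gamma$ is then minimised at $\tau = 0$---that is, when $G$ is essentially complete. Consequently the random lift, regardless of which Sidorenko-extremal $G$ is used, yields only $n^{r - 1/e(F)}$, which falls short of the conjectured $n^{r - 1/s(F)}$ precisely when $s(F) > e(F)$, i.e.\ exactly when the conjecture is genuinely new. The gain from the Sidorenko-extremality factor $p^{s(F)-\epsilon}$ contributes only a subleading term in the exponent and is overwhelmed by the cost of sparsifying $G$.

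Closing this gap appears to require genuinely new ideas. For Sidorenko $F$ (so $s(F) = e(F)$) the random lift above already attains the conjectured bound and yields a clean alternative derivation of the Pohoata--Zakharov lower bound for complete multipartite targets, so the essential difficulty lies in the non-Sidorenko regime. A plausible route is to design an algebraic construction generalising the norm hypergraphs of Koll\'ar--R\'onyai--Szab\'o and Pohoata--Zakharov in a way that encodes an arbitrary Sidorenko exponent, or to find a structurally new random model in which the ``base'' and ``extension'' vertices interact more subtly than in a bipartite lift; any such advance would likely also yield explicit dense witnesses for $s(F)$ for non-Sidorenko hypergraphs, a notoriously elusive object in its own right.
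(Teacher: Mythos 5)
The statement you were given is stated in the paper as \Cref{conj:KST-type-lowerbound}, i.e.\ it is a \emph{conjecture}, not a theorem. The paper offers no proof of it; it is posed as an open problem in the concluding section, motivated by the Koll\'ar--R\'onyai--Szab\'o, Bukh, Ma--Yuan--Zhang, and Pohoata--Zakharov lower bounds for the special case $F = K^{(r-1)}_{t_1,\dots,t_{r-1}}$. So there is no ``paper proof'' to compare your argument against, and it would be a mistake to present a complete proof here.

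That said, your write-up is an honest and essentially correct assessment of why the natural random construction does not suffice. The bookkeeping in the lift model is right: with $N \asymp |W| \asymp n$, $p = n^{-\tau}$, $q = n^{-\gamma}$, the deletion condition forces $\gamma \gtrsim 1/e(F)$ as $t \to \infty$, and since the objective $(r-1) - \tau - \gamma$ wants $\tau$ minimal, the optimum degenerates to $\tau = 0$, i.e.\ to a complete base $G$. At that point the witness $G$ for $s(F)$ plays no role and you recover exactly the routine deletion bound $n^{r - 1/e(F)}$, which matches the conjectured exponent only when $F$ is Sidorenko. This is precisely why the problem is open in the non-Sidorenko regime: the existing algebraic constructions (norm graphs and their hypergraph generalizations) all target complete multipartite links, and no construction is known that encodes a Sidorenko exponent strictly exceeding the edge count. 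Your diagnosis of where the obstruction lies, and your suggestion that a new algebraic or structural model for the base--apex interaction is needed, is a reasonable reading of the state of the art; just be explicit in any write-up that this is a discussion of an open conjecture rather than a proof.
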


One of the smallest interesting unknown cases of \Cref{conj:KST-type-lowerbound} is $C_6(t)$. Since $C_6$ is a Sidorenko, by \Cref{thm:main-extremal}, we have
\begin{equation*}
    \mathrm{ex}(n, C_6(t)) \leq O_t\left(n^{\frac{17}{6}}\right).
\end{equation*}

Determining whether $C_6(t)$ satisfies \Cref{conj:KST-type-lowerbound} would already be an interesting problem.

\begin{conjecture}
    There is a constant $C > 0$ such that the following holds for all $t \geq C$.
    \begin{equation*}
        \mathrm{ex}(n, C_6(t)) \geq \Omega\left(n^{\frac{17}{6}}\right).
    \end{equation*}
\end{conjecture}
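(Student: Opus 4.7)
The plan is to close the $o_t(1)$ gap in the probabilistic deletion bound of \Cref{cor:extremal-sidorenko} by means of a random algebraic construction, in the spirit of Bukh's method~\cite{Bukh} and its hypergraph extension due to Pohoata and Zakharov~\cite{norm-hypergraphs}. The goal is to build a $3$-uniform hypergraph $H$ whose edges are governed by a random polynomial so that every fixed copy of $C_6$ in the underlying shadow graph has only a bounded (independent of $n$) number of vertices whose link contains it. Since $C_6$ is Sidorenko, $s(C_6) = 6$ and \Cref{lem:main-extremal-one} gives the matching upper bound, so an $O(D^6)$ bound on ``link-completions'' of any fixed $C_6$ suffices.

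Concretely, take a prime power $q$ and set $V = \mathbb{F}_q^{6}$, so that $|V| = n = q^6$. Fix a degree bound $D$, sample a random symmetric polynomial $P(x,y,z)$ from a suitable bounded-dimensional $\mathbb{F}_q$-vector space of polynomials on $\mathbb{F}_q^{6} \times \mathbb{F}_q^{6} \times \mathbb{F}_q^{6}$, and define
$$
    E(H) \defeq \bigl\{\, \{x,y,z\} : x,y,z \in V \text{ distinct},\ P(x,y,z) = 0 \,\bigr\}.
$$
By the Lang--Weil estimate, $\mathbb{E}\bigl[|E(H)|\bigr] = \Theta_D(q^{17}) = \Theta(n^{17/6})$, and a standard first-moment argument (combined with deletion of the few edges arising from degenerate configurations) produces a realisation with $|E(H)| = \Omega(n^{17/6})$.

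The crux is the following forbidden-link bound: for any six pairs $(a_i, b_i)_{i \in [6]}$ in $V \times V$ whose underlying $6$-vertex graph is a copy of $C_6$, the set
$$
    N(a_\bullet, b_\bullet) \defeq \bigl\{\, c \in V : P(a_i, b_i, c) = 0 \text{ for all } i \in [6] \,\bigr\}
$$
is the common zero locus of six polynomial equations in the $6$-dimensional affine space $V$. Generically this intersection is zero-dimensional, and B\'ezout then bounds its size by $D^6$. Following Bukh's random algebraic framework, one argues that with positive probability over the choice of $P$, the inequality $|N(a_\bullet, b_\bullet)| \leq D^6$ holds uniformly over all ``non-degenerate'' $C_6$-patterns, while the contribution of degenerate patterns to $|E(H)|$ can be deleted at negligible cost. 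Setting $C \defeq D^6 + 1$, the resulting hypergraph is $C_6(t)$-free for every $t \geq C$, which would prove the conjecture.

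The hard part will be classifying and controlling the degenerate $C_6$-patterns, namely those configurations for which the six polynomial constraints $P(a_i, b_i, \cdot)$ fail to cut out a zero-dimensional variety in $V$. The analogous difficulty in the Pohoata--Zakharov construction for complete partite hypergraphs is resolved by exploiting the rigid product structure of $K^{(3)}_{t,t,t}$. The cycle $C_6$, despite being Sidorenko, carries a much richer automorphism group (dihedral of order $12$) and contains many overlapping sub-configurations (matchings, paths, shorter cycles) that can force nontrivial dependencies among the constraints. Executing the Bertini-type dimension argument required to simultaneously handle all $\Theta(n^6)$ candidate patterns --- for example, via a stratification of $C_6$-configurations by degeneracy type followed by a careful union bound --- is the principal technical obstacle, and is likely to require new combinatorial input tailored to the geometry of $C_6$.
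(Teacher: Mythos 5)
This statement is a conjecture in the paper, not a theorem --- there is no proof in the paper for you to match, and you have not supplied one either. What you have written is a research plan: it identifies the natural framework (Bukh's random algebraic method~\cite{Bukh} and the Pohoata--Zakharov hypergraph extension~\cite{norm-hypergraphs}, precisely the works the paper cites as motivation for the conjecture) and sets the dimensions correctly ($V = \mathbb{F}_q^6$, $n = q^6$, six edge-constraints on a six-dimensional space, target density $q^{17} = n^{17/6}$). But you acknowledge in your final paragraph that the decisive step --- showing that, with positive probability over $P$, \emph{every} $C_6$-pattern has a bounded apex set --- is not carried out. That step is the entire content of the conjecture.

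The difficulty you flag is genuine and I want to sharpen it. In Bukh's $K_{s,t}$ construction, the $s$ constraints $P(a_i,\cdot)=0$ are indexed by $s$ \emph{distinct vertices}, and the fact that distinct $a_i$ produce generically independent hypersurfaces is built into the choice of polynomial space; Pohoata--Zakharov exploit the rigid complete-partite structure to push this through in higher uniformity. For $C_6(t)$ the six constraints $P(a_i,b_i,\cdot)=0$ are indexed by the \emph{edges} of a cycle, which overlap pairwise in vertices, so two adjacent constraints share a coordinate slot. You would need a dimension-drop lemma of Bukh type (every bounded-complexity fibre is either of bounded size or of size $\gtrsim q$) together with a union bound over $\Theta(n^6)=\Theta(q^{36})$ labelled $C_6$-patterns, which in turn requires the ``large-fibre'' event for a single pattern to have probability roughly $o(q^{-36})$, or else a more delicate moment calculation and deletion argument. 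None of this is routine when the constraints share variables through the cycle, and you also cannot simply discard the degenerate patterns: if any degenerate pattern has $|N|\ge t$ you still contain $C_6(t)$, so the deletion of their contribution to $E(H)$ must itself be justified to cost only $o(n^{17/6})$ edges. Until those two steps are executed, this is an approach sketch rather than a proof, which is consistent with the paper presenting the statement as \Cref{conj:KST-type-lowerbound}'s smallest open instance rather than as a result.
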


\section*{Acknowledgement}
The author is supported by the National Research Foundation of Korea (NRF) grant funded by the Korea government(MSIT) No. RS-2023-00210430, and the Institute for Basic Science (IBS-R029-C4).

The author would like to thank Seonghyuk Im for the valuable discussions.


\renewcommand{\MR}[1]{}
\renewcommand{\MRhref}[2]{%
  \href{http://www.ams.org/mathscinet-getitem?mr=#1}{#2}
}

    \bibliographystyle{amsplain_initials_nobysame}
    \bibliography{bibfile}


{\small
\bigskip\bigskip\bigskip\obeylines\parindent=0pt
Hyunwoo Lee
Department of Mathematical Sciences, KAIST
Extremal Combinatorics and Probability Group (ECOPRO), Institute for Basic Science (IBS)
Daejeon
South Korea
{\it E-mail addresses}: \tt hyunwoo.lee@kaist.ac.kr}

\end{document}